\theoremstyle{definition}
\newtheorem{Theorem}{Theorem}
\newtheorem{Corollary}{Corollary}
\newtheorem{Lemma}[equation]{Lemma}
\newtheorem{Proposition}{Proposition}
\theoremstyle{definition}
\newtheorem{Definition}[equation]{Definition}
\newtheorem{Definition-Remark}[equation]{Definition/Remark}
\newtheorem{Example}[equation]{Example}
\newtheorem{Convention}[equation]{Convention}
\theoremstyle{remark}
\newtheorem{Remark}[equation]{Remark}
\numberwithin{equation}{section}
\numberwithin{figure}{section}
\newcommand{\C}{\mathbb{C}}
\newcommand{\N}{\mathbb{N}}
\newcommand{\R}{\mathbb{R}}
\newcommand{\Z}{\mathbb{Z}}
\newcommand{\mc}[1]{\mathcal{#1}} 
\newcommand{\mbf}[1]{\mathbf{#1}} 
\newcommand{\mt}[1]{\text{#1}}
\newcommand{\BM}[1]{\overline{\text{B}}_{#1}}
 \def \bb{ \atopwithdelims \llbracket \rrbracket}
\begin{document}

\title{Stirling Posets}

\author[1]{Mahir Bilen Can}
\author[2]{Yonah Cherniavsky}

\affil[1]{{\small Tulane University, New Orleans; mahirbilencan@gmail.com}}    
\affil[2]{{\small Ariel University, Israel; yonahch@ariel.ac.il}}

\normalsize

\date{\today}
\maketitle

\begin{abstract}
We define combinatorially a partial order on the set partitions 
and show that it is equivalent to the Bruhat-Chevalley-Renner
order on the upper triangular matrices. 
By considering subposets consisting of set partitions with 
a fixed number of blocks, we introduce and investigate 
``Stirling posets.'' As we show, the Stirling posets have a hierarchy 
and they glue together to give the whole set partition poset.
Moreover, we show that they (Stirling posets) are 
graded and EL-shellable. 
We offer various reformulations of their length functions
and determine the recurrences for their length generating series.
\vspace{.2cm}

\noindent 
\textbf{Keywords:} Borel monoid, Stirling numbers.\\ 
\noindent 
\textbf{MSC:} 05A15, 14M15.
\end{abstract}

\section{Introduction}

Let $n$ be a nonnegative integer. 
A collection $S_1,\dots, S_r$ of subsets 
of an $n$-element set $S$ is said to 
be a set partition of $S$ if $S_i$'s ($i=1,\dots, r$) are mutually disjoint
and $\cup_{i=1}^r S_i = S$. In this case,
$S_i$'s are called the blocks of the partition.
If $n>0$ and $S=\{1,\dots, n\}$, the collection of all 
set partitions of $S$ is denoted by $\Pi_n$. 
We will often drop set parentheses and commas and just put 
vertical bars between blocks. 
If $B_1,\dots, B_k$ are the blocks of a set 
partition $\pi$ from $\Pi_n$, then the {\em standard form}
of $\pi$ is defined as $B_1|B_2|\cdots |B_k$, where 
we assume that $\min B_1 <\cdots < \min B_k$
and the elements of each block are listed in increasing order. 
For example, 
$\pi = 136 | 2459 | 78$ is a set partition from $\Pi_9$. 

The set $\Pi_n$ is known to be a host to many interesting 
algebraic and combinatorial structures. Among 
these structures is the following well studied partial ordering: let $A$
and $A'$ be two set partitions of $S$.
$A$ is said to {\em refine} $A'$ if each block of $A$ 
is contained in some block of $A'$. 
This ``refinement ordering'' makes 
$\Pi_n$ into a lattice,
called the partition lattice, 
and by a result of Pudlak and Tuma (see~\cite{PudlakTuma})
it is known that every lattice is isomorphic to a sublattice of $\Pi_n$
for some $n$. 

A property that is shared
by all partition lattices is that their order complexes have the homotopy
type of a wedge of spheres. This important combinatorial topological property is 
seen by analyzing the labelings of the covering relations 
of the refinement ordering. Indeed, it follows as a consequence
of the fact that the refinement ordering is an ``edge lexicographically 
shellable'' (EL-shellable for short) poset as shown by Gessel 
(mentioned in~\cite{Bjorner80}) and by Wachs in~\cite{Wachs}. We postpone the proper
definition of EL-shellability to our preliminaries section but 
let us only mention very briefly that the property of EL-shellability
of a graded poset is a way of linearly ordering of  
the maximal faces of the associated order complex,
say $F_1,\cdots,F_m$, in such a way that $F_k \cap \left( \cup_{i=1}^{k-1} F_i \right)$
is a nonempty union of maximal proper faces of $F_k$ ($k=2,\dots, m$).
Having this property immediately implies a plethora of results 
on the topology of the underlying poset, such as Cohen-Macaulayness. 
It is also helpful for better understanding the M\"obius function 
of the poset. 
Our purpose in this paper is to present 
another natural partial ordering on $\Pi_n$ 
and to show that our poset is EL-shellable as well. 
To define our ordering we start with defining 
its most basic ingredient, namely the ``arc-diagram.''
It is customary to call a linearly ordered poset a chain. 
Here we will identify chains by their Hasse diagrams and 
draw them in an unorthodox way, horizontally, by placing the smallest 
entry on the left and connecting the vertices by arcs.
In Figure~\ref{F:chain1} we depicted the chain on 9 vertices,
where each arc represents a covering relation.
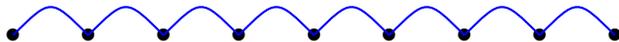
\begin{figure}[h]
\begin{center}
\begin{tikzpicture}[scale=.5]
\node at (-8,0) {$\bullet$};
\node at (-6,0) {$\bullet$};
\node at (-4,0) {$\bullet$};
\node at (-2,0) {$\bullet$};
\node at (0,0) {$\bullet$};
\node at (2,0) {$\bullet$};
\node at (4,0) {$\bullet$};
\node at (6,0) {$\bullet$};
\node at (8,0) {$\bullet$};
\draw[thick,blue,-] (-8,0) ..controls  (-7,1) .. (-6,0);
\draw[thick,blue,-] (-6,0) ..controls  (-5,1) .. (-4,0);
\draw[thick,blue,-] (-4,0) ..controls  (-3,1) .. (-2,0);
\draw[thick,blue,-] (-2,0) ..controls  (-1,1) .. (0,0);
\draw[thick,blue,-] (0,0) ..controls  (1,1) .. (2,0);
\draw[thick,blue,-] (2,0) ..controls  (3,1) .. (4,0);
\draw[thick,blue,-] (4,0) ..controls  (5,1) .. (6,0);
\draw[thick,blue,-] (6,0) ..controls  (7,1) .. (8,0);
\end{tikzpicture}
\caption{A chain on 9 vertices.}
\label{F:chain1}
\end{center}
\end{figure}

\begin{Definition}
By a {\em labeled chain} we mean a chain
whose vertices are labeled by distinct 
numbers. 
An {\em arc-diagram on $n$ vertices} is a disjoint union of 
labeled chains where the labels are from $\{1,\dots,n\}$
and each label $i\in \{1,\dots, n\}$ is used exactly once. 
\end{Definition}
See Figure~\ref{F:introexample} for an example. 
\begin{figure}[h]
\begin{center}
\begin{tikzpicture}[scale=.5]
\node at (-8,0) {$\bullet$};
\node at (-6,0) {$\bullet$};
\node at (-4,0) {$\bullet$};
\node at (-2,0) {$\bullet$};
\node at (0,0) {$\bullet$};
\node at (2,0) {$\bullet$};
\node at (4,0) {$\bullet$};
\node at (6,0) {$\bullet$};
\node at (8,0) {$\bullet$};
\node at (-8,-0.5) {$1$};
\node at (-6,-0.5) {$2$};
\node at (-4,-0.5) {$3$};
\node at (-2,-0.5) {$4$};
\node at (0,-0.5) {$5$};
\node at (2,-0.5) {$6$};
\node at (4,-0.5) {$7$};
\node at (6,-0.5) {$8$};
\node at (8,-0.5) {$9$};
\draw[thick,blue,-] (-8,0) ..controls  (-0.5,3) .. (6,0);
\draw[thick,blue,-] (-6,0) ..controls  (-3,1) .. (0,0);
\draw[thick,blue,-] (-4,0) ..controls  (0,2) .. (4,0);
\draw[thick,blue,-] (0,0) ..controls  (1,1) .. (2,0);
\draw[thick,blue,-] (2,0) ..controls  (5,1) .. (8,0);
\end{tikzpicture}
\caption{An arc-diagram on 9 vertices}
\label{F:introexample}
\end{center}
\end{figure}
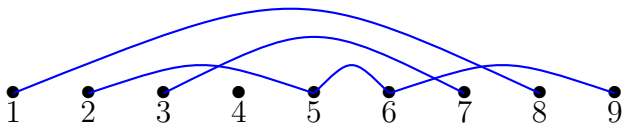

It is easy to see that the arc-diagrams 
on $n$ vertices are in bijection with the 
elements of $\Pi_n$. Indeed, the map 
that is defined by grouping the labels 
of a chain into a set extends to define
a bijection from arc-diagrams to the 
set partitions. 
For example, under this bijection, 
the arc-diagram in Figure~\ref{F:introexample} 
corresponds to the set partition $18|2569|37|4$ 
in $\Pi_9$. In the light of this bijection,
from now on, we will work with 
the arc-diagrams instead of set partitions.
Let us use the notation $\mc{A}_n$ for denoting 
the set of all arc-diagrams on $n$ vertices. 
The goal of our article is to endow $\mc{A}_n$ 
with a partial order and to use it to investigate 
certain subposets of $\mc{A}_n$. 
In particular, we will focus on the subposets
$\mc{A}_{n,k}\subset \mc{A}_n$, where the elements of 
$\mc{A}_{n,k}$ have exactly $k$ chains. 
We will call these subposets as the title of our paper,
namely, the Stirling posets.

Next we proceed to define the partial order that we will 
use throughout the paper. 
Let $A$ be an arc-diagram. We will identify
the vertices of $A$ with their labels. 
An {\em arc} in $A$ is a covering relation
in any of the labeled chains in $A$. 
If the arc denoted by $\alpha$ 
is a covering relation between the
vertices $i$ and $j$, then we write $\alpha = \{i,j\}$. 
In practice (while drawing the diagrams) we will 
always think of an arc as the graph of a 
connected concave down path in $\R^2$.
From this point of view, 
one of our most crucial conventions is that the arcs
of $A$ do not intersect each other if
they do not have to. We illustrate 
what we mean here in Figure~\ref{F:conventions}. 
If there is no possibility of continuously 
deforming two arcs $\alpha_1$ and $\alpha_2$ 
so that they do not intersect in $\R^2$, then 
they are said to {\em cross} each other. 
Otherwise, we call them {\em non-crossing} arcs.

\begin{figure}[h]
\begin{center}
\begin{tikzpicture}[scale=.5]

\begin{scope}[xshift=-7.5cm]
\node at (-4,0) {$\bullet$};
\node at (-2,0) {$\bullet$};
\node at (0,0) {$\bullet$};
\node at (2,0) {$\bullet$};
\node at (4,0) {$\bullet$};

\node at (-4,-0.5) {$1$};
\node at (-2,-0.5) {$2$};
\node at (0,-0.5) {$3$};
\node at (2,-0.5) {$4$};
\node at (4,-0.5) {$5$};
\node at (0,-2) {This is an arc-diagram.};
\draw[thick,blue,-] (-4,0) ..controls  (0,2) .. (4,0);
\draw[thick,blue,-] (-2,0) ..controls  (-1,1) .. (0,0);
\end{scope}

\begin{scope}[xshift=7.5cm]
\node at (-4,0) {$\bullet$};
\node at (-2,0) {$\bullet$};
\node at (0,0) {$\bullet$};
\node at (2,0) {$\bullet$};
\node at (4,0) {$\bullet$};

\node at (-4,-0.5) {$1$};
\node at (-2,-0.5) {$2$};
\node at (0,-0.5) {$3$};
\node at (2,-0.5) {$4$};
\node at (4,-0.5) {$5$};
\node at (0,-2) {This is not an arc-diagram.};
\draw[thick,blue,-] (-4,0) ..controls  (0,2) .. (4,0);
\draw[thick,blue,-] (-2,0) ..controls  (-1,3) .. (0,0);
\end{scope}
\end{tikzpicture}
\caption{Conventions.}
\label{F:conventions}
\end{center}
\end{figure}
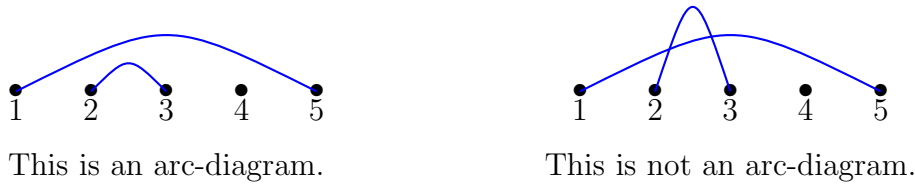

Before we proceed to explain our ordering on 
the arc-diagrams we will introduce a very useful 
function which will eventually lead us 
to a grading on our poset. 
This function is defined on all of the 
set of vertices, arcs, and chains of the arc-diagram. 
We will occasionally call a pair of non-crossing arcs
nested if both of the starting and the ending vertices
of one of the arcs stay below the other arc. 
 
\begin{Definition}\label{depth}
Let $A$ be an arc-diagram and let $\alpha$ be 
a vertex, or an arc, or a chain from $A$. 
The depth of $\alpha$, denoted by $depth(\alpha)$ 
is the total number of arcs ``above'' $\alpha$. 
\end{Definition}
Let us be more specific about what we mean by 
the word ``above'' in Definition~\ref{depth}: If 
$\alpha$ is a chain where $i$ is its leftmost 
vertex and $j$ is its rightmost vertex, then an arc $\{r,s\}$ 
is said to be above $\alpha$ if $r<i$ and $s>j$. 
For an example, see Figure~\ref{F:firstexample},
where every arc is of depth 0 and the vertex $4$
has depth 3.
\begin{figure}[h]
\begin{center}
\begin{tikzpicture}[scale=.4]
\node at (-14,0) {$\bullet$};
\node at (-12,0) {$\bullet$};
\node at (-10,0) {$\bullet$};
\node at (-8,0) {$\bullet$};
\node at (-6,0) {$\bullet$};
\node at (-4,0) {$\bullet$};
\node at (-2,0) {$\bullet$};
\node at (-14,-0.75) {$1$};
\node at (-12,-0.75) {$2$};
\node at (-10,-0.75) {$3$};
\node at (-8,-0.75) {$4$};
\node at (-6,-0.75) {$5$};
\node at (-4,-0.75) {$6$};
\node at (-2,-0.75) {$7$};
\draw[thick,blue,-] (-14,0) ..controls  (-10,2) .. (-6,0);
\draw[thick,blue,-] (-12,0) ..controls  (-9,2) .. (-4,0);
\draw[thick,blue,-] (-10,0) ..controls  (-6,2) .. (-2,0);
\end{tikzpicture}
\caption{$depth(\{2,6\})=0$.}
\label{F:firstexample}
\end{center}
\end{figure}
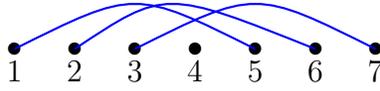
Obviously, for every arc-diagram the depths of 
the first and the last vertices are zero, that is 
$$
depth(1)=depth(n)=0.
$$ 
Another simple observation that will be useful 
in the sequel is that if an arc-diagram $A$ on 
$n$ vertices has $k$ arcs, 
then $A$ has exactly $n-k$ chains.
In this regard, let us point out that the number of set 
partitions in $\Pi_n$ with $k$ blocks,
hence the number of arc-diagrams in $\mc{A}_n$ 
with $k$ chains, is given by the Stirling numbers of the second kind;
it is easy to calculate them by using the simple recurrence
$$
S(n,k) = S(n-1,k-1) +  kS(n-1,k).
$$

Let $A$ and $B$ be two arc-diagrams on $n$ vertices. 
$B$ is said to cover $A$, and denoted by $A\prec B$, 
if it is obtained from $A$ by one of the following 
three operations:

\begin{enumerate}
\item[Rule 1.] {\em The shortening of an arc of $A$.}

In this operation we move exactly one endpoint
of an arc to another vertex so that the 
resulting arc is shortened as minimally as possible 
but the number of crossings does not change.
For example, see Figure~\ref{F:shortening}, where we
depict two examples. In the bottom example, 
the left endpoint of the arc $\{1,4\}$ is moved to the 
nearest available position, which is the vertex 
$3$. Indeed, 
there is already an arc which emanates to the right 
from the vertex $2$.
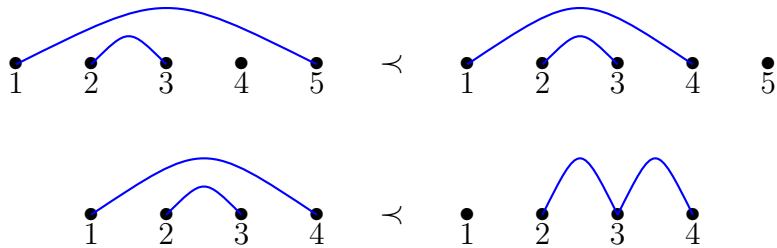
\begin{figure}[h]
\begin{center}
\begin{tikzpicture}[scale=.5]

\begin{scope}[yshift=2cm]
\node at (-10,0) {$\bullet$};
\node at (-8,0) {$\bullet$};
\node at (-6,0) {$\bullet$};
\node at (-4,0) {$\bullet$};
\node at (-2,0) {$\bullet$};
\node at (-10,-.5) {$1$};
\node at (-8,-.5) {$2$};
\node at (-6,-.5) {$3$};
\node at (-4,-.5) {$4$};
\node at (-2,-.5) {$5$};
\draw[thick,blue,-] (-10,0) ..controls  (-6,2) .. (-2,0);
\draw[thick,blue,-] (-8,0) ..controls  (-7,1) .. (-6,0);
\node at (0,0) {$\prec$};
\node at (2,0) {$\bullet$};
\node at (4,0) {$\bullet$};
\node at (6,0) {$\bullet$};
\node at (8,0) {$\bullet$};
\node at (10,0) {$\bullet$};
\node at (2,-.5) {$1$};
\node at (4,-.5) {$2$};
\node at (6,-.5) {$3$};
\node at (8,-.5) {$4$};
\node at (10,-.5) {$5$};
\draw[thick,blue,-] (2,0) ..controls  (5,2) .. (8,0);
\draw[thick,blue,-] (4,0) ..controls  (5,1) .. (6,0);
\end{scope}

\begin{scope}[yshift=-2cm]
\node at (-8,0) {$\bullet$};
\node at (-6,0) {$\bullet$};
\node at (-4,0) {$\bullet$};
\node at (-2,0) {$\bullet$};
\node at (-8,-.5) {$1$};
\node at (-6,-.5) {$2$};
\node at (-4,-.5) {$3$};
\node at (-2,-.5) {$4$};
\draw[thick,blue,-] (-8,0) ..controls  (-5,2) .. (-2,0);
\draw[thick,blue,-] (-6,0) ..controls  (-5,1) .. (-4,0);
\node at (0,0) {$\prec$};
\node at (2,0) {$\bullet$};
\node at (4,0) {$\bullet$};
\node at (6,0) {$\bullet$};
\node at (8,0) {$\bullet$};
\node at (2,-.5) {$1$};
\node at (4,-.5) {$2$};
\node at (6,-.5) {$3$};
\node at (8,-.5) {$4$};
\draw[thick,blue,-] (6,0) ..controls  (7,2) .. (8,0);
\draw[thick,blue,-] (4,0) ..controls  (5,2) .. (6,0);
\end{scope}
\end{tikzpicture}
\caption{Two examples for shortening.}
\label{F:shortening}
\end{center}
\end{figure}

\item[Rule 2.] {\em Deleting a crossing.}

In this operation we interchange
the rightmost endpoints of two crossing arcs
so that they become a pair of 
non-crossing and nested arcs;
we require in this operation that 
only one arc is deleted as a result of 
this operation. 
For example, in Figure~\ref{F:interchanging},
the endpoints of $\{1,5\}$ and $\{2,6\}$ 
are interchanged. 
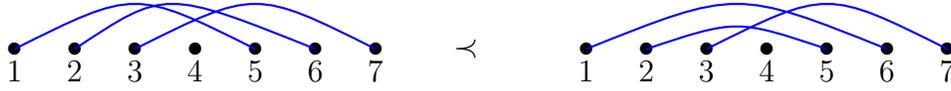
\begin{figure}[h]
\begin{center}
\begin{tikzpicture}[scale=.4]

\node at (-14,0) {$\bullet$};
\node at (-12,0) {$\bullet$};
\node at (-10,0) {$\bullet$};
\node at (-8,0) {$\bullet$};
\node at (-6,0) {$\bullet$};
\node at (-4,0) {$\bullet$};
\node at (-2,0) {$\bullet$};
\node at (-14,-0.75) {$1$};
\node at (-12,-0.75) {$2$};
\node at (-10,-0.75) {$3$};
\node at (-8,-0.75) {$4$};
\node at (-6,-0.75) {$5$};
\node at (-4,-0.75) {$6$};
\node at (-2,-0.75) {$7$};
\draw[thick,blue,-] (-14,0) ..controls  (-10,2) .. (-6,0);
\draw[thick,blue,-] (-12,0) ..controls  (-9,2) .. (-4,0);
\draw[thick,blue,-] (-10,0) ..controls  (-6,2) .. (-2,0);
\node at (1,0) {$\prec$};
\node at (5,0) {$\bullet$};
\node at (7,0) {$\bullet$};
\node at (9,0) {$\bullet$};
\node at (11,0) {$\bullet$};
\node at (13,0) {$\bullet$};
\node at (15,0) {$\bullet$};
\node at (17,0) {$\bullet$};
\node at (5,-0.75) {$1$};
\node at (7,-0.75) {$2$};
\node at (9,-0.75) {$3$};
\node at (11,-0.75) {$4$};
\node at (13,-0.75) {$5$};
\node at (15,-0.75) {$6$};
\node at (17,-0.75) {$7$};
\draw[thick,blue,-] (5,0) ..controls  (10,2) .. (15,0);
\draw[thick,blue,-] (7,0) ..controls  (10,1) .. (13,0);
\draw[thick,blue,-] (9,0) ..controls  (13,2) .. (17,0);
\end{tikzpicture}
\caption{Interchanging two endpoints.}
\label{F:interchanging}
\end{center}
\end{figure}

As a non-example, we consider 
$A=\{1,4\}\{2,5\}\{3,6\}$, which has three crossings. 
The removal of the crossing between $\{1,4\}$ and $\{3,6\}$
according to the rule that we described in the previous paragraph 
gives $A'=\{1,6\}\{2,5\}\{3,4\}$, which has no crossings.

\item[Rule 3.] {\em Adding a new arc.}

In this operation a new arc is introduced between two
vertices in such a way that the new arc is not under any other (older) arcs
and the endpoints of the new arc are as far from each other as possible.  
In Figure~\ref{F:adding} we depict two examples. In 
the former one the new arc is $\{1,6\}$ and in the latter the
new arc is $\{3,6\}$. 
\begin{figure}[h]
\begin{center}
\begin{tikzpicture}[scale=.5]
\begin{scope}[yshift=2cm]
\node at (-12,0) {$\bullet$};
\node at (-10,0) {$\bullet$};
\node at (-8,0) {$\bullet$};
\node at (-6,0) {$\bullet$};
\node at (-4,0) {$\bullet$};
\node at (-2,0) {$\bullet$};
\node at (-12,-0.5) {$1$};
\node at (-10,-0.5) {$2$};
\node at (-8,-0.5) {$3$};
\node at (-6,-0.5) {$4$};
\node at (-4,-0.5) {$5$};
\node at (-2,-0.5) {$6$};
\draw[thick,blue,-] (-10,0) ..controls  (-9,1) .. (-8,0);
\draw[thick,blue,-] (-8,0) ..controls  (-7,1) .. (-6,0);
\node at (0,0) {$\prec$};
\node at (2,0) {$\bullet$};
\node at (4,0) {$\bullet$};
\node at (6,0) {$\bullet$};
\node at (8,0) {$\bullet$};
\node at (10,0) {$\bullet$};
\node at (12,0) {$\bullet$};
\node at (2,-0.5) {$1$};
\node at (4,-0.5) {$2$};
\node at (6,-0.5) {$3$};
\node at (8,-0.5) {$4$};
\node at (10,-0.5) {$5$};
\node at (12,-0.5) {$6$};
\draw[thick,blue,-] (2,0) ..controls  (7,2) .. (12,0);
\draw[thick,blue,-] (4,0) ..controls  (5,1) .. (6,0);
\draw[thick,blue,-] (6,0) ..controls  (7,1) .. (8,0);
\end{scope}

\begin{scope}[yshift=-2cm]
\node at (-12,0) {$\bullet$};
\node at (-10,0) {$\bullet$};
\node at (-8,0) {$\bullet$};
\node at (-6,0) {$\bullet$};
\node at (-4,0) {$\bullet$};
\node at (-2,0) {$\bullet$};
\node at (-12,-0.5) {$1$};
\node at (-10,-0.5) {$2$};
\node at (-8,-0.5) {$3$};
\node at (-6,-0.5) {$4$};
\node at (-4,-0.5) {$5$};
\node at (-2,-0.5) {$6$};
\draw[thick,blue,-] (-12,0) ..controls  (-9,2) .. (-6,0);
\draw[thick,blue,-] (-10,0) ..controls  (-9,1) .. (-8,0);
\node at (0,0) {$\prec$};
\node at (2,0) {$\bullet$};
\node at (4,0) {$\bullet$};
\node at (6,0) {$\bullet$};
\node at (8,0) {$\bullet$};
\node at (10,0) {$\bullet$};
\node at (12,0) {$\bullet$};
\node at (2,-0.5) {$1$};
\node at (4,-0.5) {$2$};
\node at (6,-0.5) {$3$};
\node at (8,-0.5) {$4$};
\node at (10,-0.5) {$5$};
\node at (12,-0.5) {$6$};
\draw[thick,blue,-] (2,0) ..controls  (5,2) .. (8,0);
\draw[thick,blue,-] (4,0) ..controls  (5,1) .. (6,0);
\draw[thick,blue,-] (6,0) ..controls  (9,2) .. (12,0);
\end{scope}
\end{tikzpicture}
\caption{Two examples of adding a new arc.}
\label{F:adding}
\end{center}
\end{figure}
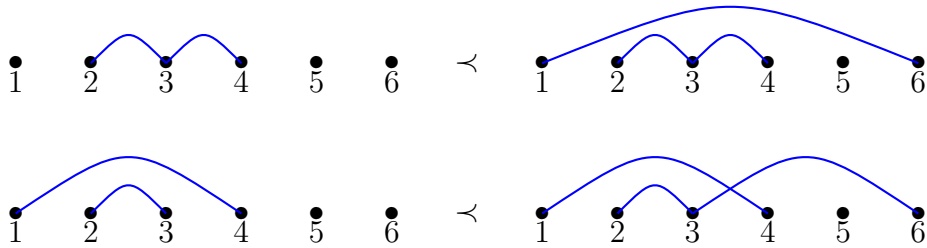
\end{enumerate}

From now on we will call the set $\mc{A}_n$ together 
with the transitive closure of the covering relations 
we just defined the arc-diagram poset and denote it by $(\mc{A}_n,\prec)$.

Next, we define our first combinatorial
statistic.
\begin{Definition}\label{D:statdd}
Let $A$ be an arc-diagram on $n$ vertices 
$v_1,\dots, v_n$ and with $k$ arcs 
$\alpha_1$, $\alpha_2$,...,$\alpha_k$. 
We define the depth-index of $A$, denoted by $\mathtt{t}(A)$ by the formula 
$$
\mathtt{t}(A)=\sum_{i=1}^k (n-i)
-\sum_{j=1}^n depth(v_j)+\sum_{m=1}^k depth(\alpha_m).
$$
\end{Definition}

One of the main results of our paper is the following statement.

\begin{Theorem}\label{T:main1}
For every positive integer $n$,
the arc-diagrams poset $(\mc{A}_n,\prec)$  
is a bounded, graded, and an EL-shellable 
poset. The depth-index function is  
the grading of $\mc{A}_n$.  
\end{Theorem}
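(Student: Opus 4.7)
The plan is to verify the three properties---boundedness, grading, and EL-shellability---in order of increasing difficulty. For boundedness, I would identify the minimum $\hat 0 \in \mc{A}_n$ as the empty arc-diagram (all singletons, with $\mathtt{t}(\hat 0)=0$) and the maximum $\hat 1$ as the single chain $1-2-\cdots-n$ whose arcs are $\{i,i+1\}$ for $i=1,\dots,n-1$. None of Rules 1, 2, or 3 can be applied to $\hat 1$ because its arcs already have length one, it has no crossings, and it consists of a single chain; conversely, a short case analysis shows that any $A\neq \hat 1$ admits at least one covering. A direct computation then yields $\mathtt{t}(\hat 1)=\binom{n}{2}$.

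For the grading, I would check in each of the three cases that the depth-index increases by exactly $1$ across a covering relation. Under Rule 1 (shortening an arc), the number of arcs $k$ is unchanged, so the leading sum in $\mathtt{t}$ is fixed; moving one endpoint of the arc inward by the minimal admissible amount exposes exactly one vertex (whose depth drops by one) and leaves all arc depths unaffected, for a net change of $+1$. Under Rule 2 (removing a crossing), again $k$ is unchanged; swapping the right endpoints of the two crossing arcs turns them into a nested pair, raising the depth of the inner arc by one while all vertex depths are preserved, yielding $+1$. Under Rule 3 (adding a new arc $\alpha=\{i,j\}$), the arc count becomes $k+1$, contributing $n-(k+1)$ to the leading sum; by construction $depth(\alpha)=0$, while every vertex strictly between $i$ and $j$ gains one unit of depth and every old arc strictly under $\alpha$ also gains one unit of depth. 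Tallying these contributions, and using the fact that $\alpha$ is chosen maximally so that its span is completely filled by nested chains, gives $\Delta\mathtt{t}=+1$. It then follows that $\mathtt{t}$ is a rank function and every maximal chain from $\hat 0$ to $\hat 1$ has common length $\binom{n}{2}$.

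For EL-shellability, the plan is to define an edge labeling $\lambda$ on covering relations by associating to each cover $A\prec B$ a lexicographically-ordered pair of vertices that records the operation: for Rule 1, the two endpoints of the shortened arc in $B$; for Rule 2, the pair of right endpoints that were swapped; for Rule 3, the two endpoints of the newly added arc. Fixing an interval $[A,B]\subset \mc{A}_n$, one then builds a maximal chain greedily---at each stage choosing the cover with the lex-smallest admissible label---and argues both that the resulting label sequence is strictly increasing and that it is lex-smaller than the label sequence of any other maximal chain in $[A,B]$. The verification proceeds by induction on the rank $\mathtt{t}(B)-\mathtt{t}(A)$, with a case split according to which of Rules 1, 2, or 3 applies at the first step.

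The main obstacle will be establishing that the greedy rising chain in each interval is the unique one with strictly increasing labels. This requires a careful analysis of how the three operations commute and interact along a chain, since a single vertex may appear in several different covers and one must track which arcs are active at each stage. Here the equivalence with the Bruhat-Chevalley-Renner order on upper triangular matrices alluded to in the abstract is expected to be indispensable, as it allows one to translate the combinatorial covering rules into matrix-entry language where EL-shellings for monoid Bruhat orders are well developed; alternatively, a self-contained induction on $n$ may be feasible by exploiting the embedding of $\mc{A}_{n-1}$ into $\mc{A}_n$ as the subposet in which vertex $n$ is a singleton chain.
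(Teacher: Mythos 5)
Your plan diverges from the paper's proof, and the divergence matters: the paper does not verify gradedness and EL-shellability directly on arc-diagrams at all. It first proves (Theorem~\ref{T:Borelmonoid}) that $\varphi:(\mc{A}_n,\prec)\to(B_n^{nil},\leq)$ is a poset isomorphism by matching Rules 1--3 against the covering relations of the Bruhat--Chevalley--Renner order (Theorem~\ref{T:PPR} and Lemmas~\ref{L:PPRcovering0}, \ref{L:PPRcovering1}), then observes that $B_n^{nil}$ is the lower interval $[(0,\dots,0),(0,1,\dots,n-1)]$ in $R_n$, and imports boundedness, gradedness, and EL-shellability from the known results on $(R_n,\leq)$. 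The identification of the depth-index with the rank function is then a separate induction on $n$ (Proposition~\ref{P:second part}), removing the arc at vertex $1$, rather than a case check over the three covering rules.

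The genuine gap in your proposal is the EL-shellability step. You propose a labeling of covers by vertex pairs and a greedy rising chain, but you neither verify that the increasing chain in each interval is unique nor that it is lexicographically first; you explicitly flag this as ``the main obstacle'' and then defer to the translation into the Bruhat--Chevalley--Renner order --- which is precisely the content of the paper's Theorem~\ref{T:Borelmonoid}, so nothing self-contained has been established. There is no reason to expect the naive pair-of-endpoints labeling to be an EL-labeling: known EL-labelings of $R_n$ (the result the paper cites) are considerably more delicate, and a single interval can contain covers of all three types whose labels interleave in ways your greedy argument does not control. A smaller but concrete error occurs in your Rule 1 bookkeeping: when the shortened endpoint must jump over an occupied vertex (as in the bottom example of Figure~\ref{F:shortening}, where $\{1,4\}$ becomes $\{3,4\}$), more than one vertex changes depth and the depth of a nested arc can drop, so the claim that exactly one vertex is exposed and all arc depths are unaffected is false as stated; the net change is still $+1$, but your accounting does not show it. Similarly, your Rule 3 tally reduces to the identity $j-i-1-q=n-k-2$, which you assert rather than prove; the paper's Proposition~\ref{P:second part} contains the counting argument (arcs plus chains in a truncated diagram equals the number of vertices) that actually closes this.
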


The proof of our theorem 
is at least as interesting as its statement. 
To explain it, we venture outside of combinatorics.
Here we assume some familiarity with elementary algebraic geometry. 
Let $\mt{Mat}_n$ denote the linear algebraic monoid of $n\times n$ 
matrices defined over $\C$. The group of invertible elements,
also called the {\em unit group}, of $\mt{Mat}_n$ is the 
general linear group of invertible $n\times n$ matrices. 
The (standard) Borel subgroup of $\mt{GL}_n$, denoted by $\mt{B}_n$,
is the subgroup $\mt{B}_n\subset \mt{GL}_n$ 
consisting of upper triangular matrices only. 
Then the doubled Borel group $\mt{B}_n\times \mt{B}_n$ acts on 
matrices via 
\begin{align}\label{A:BxB action}
(b_1,b_2) \cdot x  = b_1 x b_2^{-1}\qquad (b_1,b_2\in \mt{B}_n,\ x \in \mt{Mat}_n)
\end{align}
Clearly, $\mt{GL}_n$ is stable under this action. 
By the special case of an important result of Renner~\cite{Renner86},  
it is known that the action (\ref{A:BxB action}) has finitely many orbits
and moreover the orbits of the action are parametrized 
by a finite inverse semigroup:
\begin{align}\label{A:BruhatRenner}
\mt{Mat}_n &= \bigsqcup_{\sigma \in R_n} \mt{B}_n \sigma \mt{B}_n,
\end{align}
where $R_n$ is the finite monoid consisting of $n\times n$ 0/1 
matrices with at most one 1 in each row and each column.
The monoid $R_n$ is called the rook monoid; its elements are called rooks.
(The nomenclature comes from the fact that 
the elements of $R_n$ are in bijection with the 
non-attacking rook placements on an $n\times n$ chessboard.)
The Bruhat-Chevalley-Renner ordering on $R_n$
is the partial ordering that is defined by 
\begin{align}\label{A:BCR}
\sigma \leq \tau \iff  \mt{B}_n \sigma \mt{B}_n \subseteq  \overline{\mt{B}_n \tau \mt{B}_n}
\end{align}
for $\sigma,\tau \in R_n$. This poset structure on $R_n$ is well studied,~\cite{CanRenner}. 
It is known that $(R_n,\leq)$ is a graded, bounded, EL-shellable poset, see~\cite{Can08}.

Towards a proof of Theorem~\ref{T:main1},  
we make use of an important algebraic submonoid of $\mt{Mat}_n$; 
it is the closure in Zariski topology of the Borel subgroup $\mt{B}_n$ in
$\mt{Mat}_n$. We will call $\BM{n}$ the (standard) Borel submonoid. 
The first systematic study of the theory of Borel submonoids
as a part of more general but interrelated theory of parabolic monoids is undertaken 
by Putcha in~\cite{Putcha06}. Here we are focusing on one extreme 
case only. 

The Borel submonoid $\BM{n}$ consists of all upper triangular $n\times n$ 
matrices with complex entries. To see this, we use the standard 
(semidirect product) decomposition
$$
\mt{B}_n = \mt{T}_n \mt{U}_n,
$$
where $\mt{T}_n$ is the maximal torus consisting of invertible diagonal 
matrices and $\mt{U}_n$ is the unipotent subgroup consisting of 
upper triangular unipotent matrices. It is easy to check that 
$\mt{U}_n$ is already closed in $\mt{Mat}_n$, therefore, 
the Borel submonoid is determined (generated) by its
submonoids $\overline{\mt{T}}_n$ and $\mt{U}_n$. 
Here, $\overline{\mt{T}}_n$ is the diagonal submonoid consisting 
of all diagonal matrices. Note that $\overline{\mt{T}}_n$ is an affine 
toric variety and there is a one-to-one correspondence 
between the cones of its defining ``fan'' and its set of idempotents.
(An idempotent in a monoid is an element $e$ such that 
$e^2 = id$.)

Let $M$ be a monoid and let $1_M$ 
denote its identity element. 
For us, a submonoid $N$ in a monoid $M$
is a subsemigroup $N\subset M$ such that $1_M\in N$.
In particular, $1_M$ is the identity element in $N$. 
Now, $\BM{n}$ is a submonoid of $\mt{Mat}_n$. 
Moreover, since it is closed under the two sided
action of $\mt{B}_n$, 
it has the induced Bruhat-Chevalley-Renner decomposition
\begin{align}\label{A:Rennerdecomposition}
\BM{n}=  \bigsqcup_{\sigma \in B_n} \mt{B}_n \sigma \mt{B}_n.
\end{align}
Here, $B_n$ is the set of all $n\times n$ rooks 
which are upper triangular in shape. 
Note that $B_n$ is a submonoid of $R_n$ according
to our definition. We call it the upper triangular rook monoid
(on $n$ letters). 
In Figure~\ref{F:B3} we depict 
the induced Bruhat-Chevalley-Renner ordering
on $B_3$. 

\begin{figure}[htp]
\begin{center}
\resizebox{14cm}{18cm}{
\begin{tikzpicture}[scale=.75]

\node at (0,30) (g1) {$\begin{bmatrix}   1 & 0 & 0 \\  0 & 1 & 0 \\  0 & 0 & 1 \\  \end{bmatrix}$};

\node at (-8,25) (f1) {$\begin{bmatrix}   1 & 0 & 0 \\  0 & 1 & 0 \\  0 & 0 & 0 \\  \end{bmatrix}$};
\node at (0,25) (f2) {$\begin{bmatrix}   1 & 0 & 0 \\  0 & 0 & 0 \\  0 & 0 & 1 \\   \end{bmatrix}$};
\node at (8,25) (f3) {$\begin{bmatrix}   0 & 0 & 0 \\  0 & 1 & 0 \\  0 & 0 & 1 \\  \end{bmatrix}$};

\node at (-8,20) (e1) {$\begin{bmatrix}  1 & 0 & 0 \\ 0 & 0 & 1\\  0 & 0 & 0 \\  \end{bmatrix}$};
\node at (0,20) (e2) {$\begin{bmatrix}   0 & 0 & 1 \\  0 & 1 & 0 \\  0 & 0 & 0 \\  \end{bmatrix}$};
\node at (8,20) (e3) {$\begin{bmatrix}   0 & 1 & 0 \\  0 & 0 & 0 \\  0 & 0 & 1 \\  \end{bmatrix}$};

\node at (-14,15) (d1) {$\begin{bmatrix}   1 & 0 & 0 \\  0 & 0 & 0 \\  0 & 0 & 0 \\  \end{bmatrix}$};
\node at (-5,15) (d2) {$\begin{bmatrix}  0 & 0 & 0 \\  0 & 1 & 0 \\ 0 & 0 & 0  \\  \end{bmatrix}$};
\node at (5,15) (d3) {$\begin{bmatrix}   0 & 1 & 0 \\  0 & 0 & 1 \\ 0 & 0 & 0 \\  \end{bmatrix}$};
\node at (14,15) (d4) {$\begin{bmatrix}   0 & 0 & 0 \\  0 & 0 & 0 \\  0 & 0 & 1 \\  \end{bmatrix}$};

\node at (-5,10) (c1) {$\begin{bmatrix}   0 & 1 & 0 \\  0 & 0 & 0 \\ 0 & 0 & 0 \\  \end{bmatrix}$};
\node at (5,10) (c2) {$\begin{bmatrix}   0 & 0 & 0 \\  0 & 0 & 1 \\ 0 & 0 & 0  \\  \end{bmatrix}$};

\node at (0,5) (b1) {$\begin{bmatrix}   0 & 0 & 1 \\  0 & 0 & 0 \\ 0 & 0 & 0  \\  \end{bmatrix}$};

\node at (0,0) (a) {$\begin{bmatrix}  0 & 0 & 0  \\ 0 & 0 & 0  \\ 0 & 0 & 0 \\  \end{bmatrix}$};

\draw[-, very thick] (a) to (b1);
\draw[-, very thick] (b1) to (c1);
\draw[-, very thick] (b1) to (c2);
\draw[-, very thick] (c1) to (d1);
\draw[-, very thick] (c1) to (d2);
\draw[-, very thick] (c1) to (d3);

\draw[-, very thick] (c2) to (d2);
\draw[-, very thick] (c2) to (d3);
\draw[-, very thick] (c2) to (d4);

\draw[-, very thick] (d1) to (e1);
\draw[-, very thick] (d2) to (e2);
\draw[-, very thick] (d4) to (e3);
\draw[-, very thick] (d3) to (e1);
\draw[-, very thick] (d3) to (e2);
\draw[-, very thick] (d3) to (e3);

\draw[-, very thick] (e1) to (f1);
\draw[-, very thick] (e1) to (f2);
\draw[-, very thick] (e2) to (f1);
\draw[-, very thick] (e2) to (f3);
\draw[-, very thick] (e3) to (f2);
\draw[-, very thick] (e3) to (f3);

\draw[-, very thick] (f1) to (g1);
\draw[-, very thick] (f2) to (g1);
\draw[-, very thick] (f3) to (g1);

\end{tikzpicture}
}
\end{center}
\caption{Bruhat-Chevalley-Renner order on $B_3$.}
\label{F:B3}
\end{figure}

Another subsemigroup that is very useful for our purposes
is the semigroup of all nilpotent rooks from $B_n$,
which we call the standard nilpotent rook monoid and denote by $B_n^{nil}$. 
We should point out that the identity element of $B_n^{nil}$ is not the same
as that of $B_n$. Nevertheless, $B_n^{nil}$ is a monoid.
In fact, for $n>0$, 
it is not difficult to see that $B_n^{nil}$ is isomorphic, 
as a monoid, to the upper triangular rook monoid $B_{n-1}$. 
By going through the same vein we observe that the 
semigroup of nilpotent elements in $\BM{n}$ is isomorphic as a monoid to $\BM{n-1}$. 
Moreover, this is an isomorphism of algebraic monoids.

The sets of idempotents of the monoids $\BM{n}$ 
and $\overline{\mt{T}}_n$ are the same and it consists
of $n\times n$ diagoanal matrices with 0/1 entries. 
Let us denote this common set of idempotents by $E_n$. 
It is not difficult to see that $E_n$ is a Boolean lattice 
with respect to the ordering
$$
e\leq f \iff e f = fe =e\ \hspace{.5cm} (e,f\in E_n),
$$
In particular, $E_n$ has $2^{n}$ elements. 
We denote by $E_{n,k}$ the set of idempotents from $E_n$ 
whose matrix rank is $k$ and we define the following subvariety 
the Borel monoid:
\begin{align}\label{D:Stirlingmonoid}
\mt{B}_{n,k} :=  \bigcup_{e\in E_{n,k}} \overline{\mt{B}_n e \mt{B}_n}.
\end{align}
Notice that except when $k\in \{0,n\}$, 
$\mt{B}_{n,k}$ is not irreducible as an algebraic variety. 
Obviously, $\mt{B}_{n,n}$ is equal to $\BM{n}$
and $\mt{B}_{n,0} = \overline{\mt{B}_n \cdot \mathbf{0} \cdot \mt{B}_n} =\{ \mathbf{0}\}$.

The proofs of the following observations will be given in the sequel.
\begin{enumerate}
\item for $k=0,\dots, n$, the number of irreducible components of $\mt{B}_{n,k}$
is ${n \choose k}$ and they are all equal dimensional.
\item $\mt{B}_{n,k}$'s form a flag 
$\{ \mathbf{0}\} = \mt{B}_{n,0} \subset \mt{B}_{n,1} \subset \cdots \subset \mt{B}_{n,n-1} \subset 
\mt{B}_{n,n} = \BM{n}$.
\item each $\mt{B}_{n,k}$ ($k=0,\dots, n$) has the structure of an algebraic 
semigroup.
\item each $\mt{B}_{n,k}$ ($k=0,\dots, n$)
has a Renner decomposition 
\begin{align}\label{A:RennerStirlingdecomposition}
\mt{B}_{n,k}=  \bigsqcup_{\sigma \in B_{n,k}} \mt{B}_n \sigma \mt{B}_n,
\end{align}
where $B_{n,k}$ is a finite subsemigroup of $B_n$ and it consists of 
rooks whose matrix rank is at most $k$. 
Moreover, with respect to induced Bruhat-Chevalley-Renner ordering
the poset $(B_{n,k},\leq )$ is a 
union of lower intervals of equal lengths in $B_n$.
\item The subsemigroups $B_{n,k}\subset B_n$ form a 
flag $\{\mathbf{0}\}\subset B_{n,1}\subset \cdots \subset B_{n,n}=B_n$
and moreover the number of elements of $B_{n,k}-B_{n,k-1}$ is 
given by the Stirling number $S(n+1,n+1-k)$. 
\item The Bruhat-Chevalley-Renner ordering
restricted to the subsets of the form $B_{n,k}-B_{n,k-1}$
(for $k=1,\dots, n$) is graded
with a minimum and there are ${n \choose k}$ maximal elements. 
Each maximal interval in this poset is an interval in $B_n$,
therefore, it is an EL-shellable poset. 
\end{enumerate}

As an application of our study of the 
Bruhat-Chevalley-Renner ordering on $B_{n,k}$'s 
we will prove the following theorem, which, in turn, 
will give us the proof of Theorem~\ref{T:main1}. 
Indeed, the poset $(B_n^{nil},\leq)$ is a lower interval
in the rook monoid, and $R_n$ is known
to be an EL-shellable poset. 

\begin{Theorem}\label{T:Borelmonoid}
The arc-diagram poset $(\mc{A}_n,\prec)$ is isomorphic to $(B_n^{nil},\leq)$.
\end{Theorem}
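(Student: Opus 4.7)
The plan is to construct an explicit bijection $\phi \colon \mc{A}_n \to B_n^{nil}$ sending each arc-diagram to its natural incidence matrix, and then to verify that $\phi$ carries the three covering rules of $(\mc{A}_n, \prec)$ to the covering relations of the Bruhat-Chevalley-Renner order on $B_n^{nil}$ (and conversely). Concretely, I would define $\phi(A) = M_A$, where $M_A$ is the $n \times n$ $0/1$ matrix whose $(i,j)$-entry equals $1$ if and only if $\{i,j\}$ is an arc of $A$ with $i < j$. Because each vertex of $A$ is the left endpoint of at most one arc and the right endpoint of at most one arc, $M_A$ has at most one $1$ in each row and each column; because every arc has $i<j$, the matrix is strictly upper triangular and hence nilpotent. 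Thus $M_A \in B_n^{nil}$, and the inverse reads off the arcs from the support of $M$ and reconstructs the chains as the connected components of the resulting graph. This establishes a bijection of underlying sets.

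Next I would check that the three covering rules of $\prec$ correspond under $\phi$ to Bruhat-Chevalley-Renner covers of $B_n^{nil}$. Rule~1 (shortening an arc) moves a single entry of $M_A$ to the closest admissible location in its row or column, which is the usual length-one shift-toward-diagonal cover in the rook monoid. Rule~2 (deleting a crossing) interchanges the right endpoints of two arcs so that the number of crossings drops by exactly one; at the matrix level, this is the transposition of two columns that eliminates exactly one inversion, a classical Bruhat cover. Rule~3 (adding an arc whose endpoints are as far apart as possible) inserts a new $1$ at the Bruhat-maximal empty position, the characteristic rank-increasing cover in $R_n$ (compare \cite{CanRenner}). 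For the converse direction, one invokes the known classification of covering relations in $(R_n, \leq)$ together with the earlier observation that $(B_n^{nil},\leq)$ is a lower interval in $(R_n,\leq)$: every cover of a strictly upper triangular rook that remains strictly upper triangular must be of one of the three types above, and an inspection shows that each corresponds to an application of Rule~1, Rule~2, or Rule~3.

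The main obstacle is the detailed case analysis in the second step, particularly verifying that the ``nearest available vertex'' stipulation in Rule~1 and the ``furthest-apart endpoints'' stipulation in Rule~3 single out precisely the minimal cover rather than a longer jump. A convenient way to certify this is to prove simultaneously that the depth-index function $\mathtt{t}$ of Definition~\ref{D:statdd} agrees with the rank function on $(B_n^{nil}, \leq)$ induced from $R_n$; that is, $\mathtt{t}(A) = \ell(\phi(A))$ for every $A \in \mc{A}_n$. Once this identity is in hand, the claim that $\phi$ sends covers to covers in both directions follows because $\phi$ is then a length-preserving bijection between bounded graded posets. The identity itself can be established by induction on the number of arcs, verifying directly that each of the three rules changes $\mathtt{t}$ by exactly one.
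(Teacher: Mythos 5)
Your proposal is correct and takes essentially the same approach as the paper: your map $\phi$ is exactly the paper's bijection $\varphi$ (an arc $\{i,j\}$ corresponds to the one-line entry $\sigma_j=i$), and the paper likewise proves order-preservation in both directions by matching Rules 1--3 against the explicit covering relations of the Bruhat--Chevalley--Renner order (Theorem~\ref{T:PPR}, Lemmas~\ref{L:PPRcovering0} and \ref{L:PPRcovering1}). The only divergence is that the paper certifies the ``nearest available'' and ``as far as possible'' stipulations directly from those covering lemmas rather than via the identity $\mathtt{t}=\ell\circ\varphi$ (which it proves separately as Proposition~\ref{P:second part}); note that a length-preserving bijection is not by itself order-preserving, so the case analysis you defer is still needed and cannot be wholly replaced by the grading argument.
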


Next, we show that the arc-diagram poset is a 
disjoint union of EL-shellable subposets, 
which are not necessarily intervals. The cardinalities of 
these subposets will be given by the Stirling numbers of 
the second kind.

\begin{Theorem}\label{T:Borelmonoid2}
If $\mc{A}_{n,k}$ denotes the set of arc-diagrams 
with $n-k$ chains, then $(\mc{A}_{n,k},\prec)$
is a graded EL-shellable poset with a unique minimum
and ${n \choose k}$ maximum elements. 
\end{Theorem}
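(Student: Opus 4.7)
The plan is to reduce the statement to property~(6) from the list of structural facts about $B_{n,k}-B_{n,k-1}$ promised in the introduction, by transporting everything through the poset isomorphism of Theorem~\ref{T:Borelmonoid}. First I would identify the subposet $\mc{A}_{n,k}\subset \mc{A}_n$ with an appropriate rank stratum of $B_n^{nil}$: an arc-diagram on $n$ vertices with $n-k$ chains has exactly $k$ arcs, and under the bijection used in Theorem~\ref{T:Borelmonoid} (sending an arc $\{i,j\}$ to the entry at position $(i,j)$ of a nilpotent upper triangular rook), the matrix rank of the image equals the number of arcs. Consequently $(\mc{A}_{n,k},\prec)$ is carried onto the rank-$k$ stratum of $B_n^{nil}$. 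Using the identification of $B_n^{nil}$ with $B_{n-1}$ recorded in the introduction, this stratum is in turn identified with $B_{n-1,k}-B_{n-1,k-1}$ inside $B_{n-1}$.

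Once this identification is in place, property~(6) applies verbatim: the stratum $B_{n-1,k}-B_{n-1,k-1}$ is graded with a unique minimum, its maxima are in bijection with the rank-$k$ diagonal idempotents (which in the arc-diagram picture correspond to the configurations whose $k$ arcs are all of the adjacent form $\{i,i+1\}$), and each maximal interval inside the stratum is in fact a whole lower interval $[\hat 0, e]$ of $B_{n-1}$. The EL-shellability of $B_{n-1}$, and hence of each such lower interval by restriction, is established in~\cite{Can08}. Transporting the conclusion back through Theorem~\ref{T:Borelmonoid} yields the graded structure, the unique minimum, the asserted count of maxima, and the EL-shellability claimed for $(\mc{A}_{n,k},\prec)$.

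The step I expect to pose the main technical obstacle is upgrading those per-interval EL-labelings to a single, globally valid EL-labeling of the stratum. For a bona fide EL-shelling one must exhibit a unique rising maximal chain in every closed interval $[x,y]$ of the stratum, not merely those interior to a single maximal lower interval $[\hat 0,e]$. The resolution, implicit in property~(6), should be that whenever two elements of the stratum are comparable they already sit together in a common maximal lower interval; the restriction of a fixed EL-labeling of the ambient poset $B_{n-1}$ is then unambiguous on the stratum, and the rising-chain condition in every subinterval is inherited from the EL-shellability of $B_{n-1}$. Verifying that the maximal chains of the stratum really coincide with the maximal chains of the ambient lower intervals---so that restriction produces no spurious covering relations and no new rising chains---is the one piece of combinatorics that will require attention beyond a direct citation.
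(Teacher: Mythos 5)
Your proposal is correct and follows essentially the same route as the paper: the paper likewise transports $\mc{A}_{n,k}$ through the isomorphism of Theorem~\ref{T:Borelmonoid} onto the corresponding rank stratum $B_{n-1,\cdot}-B_{n-1,\cdot-1}$ and invokes Proposition~\ref{P:Pnk} (item (6) of the introduction). The final issue you flag is resolved exactly as you anticipate: the paper explicitly relaxes the unique-maximum requirement in its definition of EL-shellability (see the Remark following that definition), and rank-monotonicity of the order (Remark~\ref{R:simple but useful}) ensures that every interval of the stratum is literally an interval of $B_{n-1}$, so no spurious covers or rising chains arise.
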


\begin{Definition}
The $(n,k)$-th Stirling poset is the poset $(\mc{A}_{n,k},\prec)$. 
By abusing notation, we will denote it by $\mc{A}_{n,k}$. 
\end{Definition}

To contrast $\mc{A}_{n,k}$ with the corresponding subposet 
in the refinement ordering on set partitions, 
let us mention that any two unequal set partitions of $\{1,\dots, n\}$
with the same number of blocks are not comparable. 
In other words, the collection of 
arc-diagrams with the same number of chains do not form an interesting 
poset with respect to refinement ordering.  
On the other hand, similarly to the refinement ordering,
in $(\mc{A}_n,\prec)$, the Stirling subposets have a hierarchy in the 
sense that $\mc{A}_{n,k}$ lies above $\mc{A}_{n,k-1}$. Indeed,  
if $x$ and $y$ are two maximal elements from $\mc{A}_{n,k}$
and $\mc{A}_{n,k-1}$, respectively, then $\mathtt{t}(x) - \mathtt{t}(y) = n-k$.
From a similar vein, if $x_0$ and $y_0$ denotes, respectively, 
the minimum elements of $\mc{A}_{n,k}$ and $\mc{A}_{n,k-1}$, 
then $\mathtt{t}(x_0)-\mathtt{t}(y_0) = k$. 
\vspace{.5cm}

It is not difficult to 
see that when $k=1$, $\mc{A}_{n,1}$ is 
the ``fish net'' as in Figure~\ref{F:fishnet},
hence every interval in $\mc{A}_{n,1}$ is a lattice. 
As $k$ increases, $\mc{A}_{n,k}$ becomes 
more complicated. 
Nevertheless, it is a pleasantly 
surprising fact that $\mc{A}_{n,2}$ 
is a lattice as well. 
The smallest integer $n$ for which $\mc{A}_{n,k}$
has a non-lattice subinterval is $n=5$. See Figure~\ref{F:nonlattice}.
\begin{figure}[h]
\begin{center}
\begin{tikzpicture}[scale=.4]
\begin{scope}[xshift=-12cm, yshift=8cm]
\node at (-2,0) {$\bullet$}; 
\node at (-1,0) {$\bullet$}; 
\node at (0,0) {$\bullet$}; 
\node at (1,0) {$\bullet$}; 
\node at (2,0) {$\bullet$}; 
\draw[thick,blue,-] (1,0) ..controls  (1.5,.5) .. (2,0);
\end{scope}
\begin{scope}[xshift=-4cm, yshift=8cm]
\node at (-2,0) {$\bullet$}; 
\node at (-1,0) {$\bullet$}; 
\node at (0,0) {$\bullet$}; 
\node at (1,0) {$\bullet$}; 
\node at (2,0) {$\bullet$}; 
\draw[thick,blue,-] (0,0) ..controls  (.5,.5) .. (1,0);
\end{scope}
\begin{scope}[xshift=4cm, yshift=8cm]
\node at (-2,0) {$\bullet$}; 
\node at (-1,0) {$\bullet$}; 
\node at (0,0) {$\bullet$}; 
\node at (1,0) {$\bullet$}; 
\node at (2,0) {$\bullet$}; 
\draw[thick,blue,-] (-1,0) ..controls  (-.5,.5) .. (0,0);
\end{scope}
\begin{scope}[xshift=12cm, yshift=8cm]
\node at (-2,0) {$\bullet$}; 
\node at (-1,0) {$\bullet$}; 
\node at (0,0) {$\bullet$}; 
\node at (1,0) {$\bullet$}; 
\node at (2,0) {$\bullet$}; 
\draw[thick,blue,-] (-2,0) ..controls  (-1.5,.5) .. (-1,0);
\end{scope}

\begin{scope}[xshift=-8cm, yshift=4cm]
\node at (-2,0) {$\bullet$}; 
\node at (-1,0) {$\bullet$}; 
\node at (0,0) {$\bullet$}; 
\node at (1,0) {$\bullet$}; 
\node at (2,0) {$\bullet$}; 
\draw[thick,blue,-] (0,0) ..controls  (1,1) .. (2,0);
\end{scope}
\begin{scope}[xshift=0cm, yshift=4cm]
\node at (-2,0) {$\bullet$}; 
\node at (-1,0) {$\bullet$}; 
\node at (0,0) {$\bullet$}; 
\node at (1,0) {$\bullet$}; 
\node at (2,0) {$\bullet$}; 
\draw[thick,blue,-] (-1,0) ..controls  (0,1) .. (1,0);
\end{scope}
\begin{scope}[xshift=8cm, yshift=4cm]
\node at (-2,0) {$\bullet$}; 
\node at (-1,0) {$\bullet$}; 
\node at (0,0) {$\bullet$}; 
\node at (1,0) {$\bullet$}; 
\node at (2,0) {$\bullet$}; 
\draw[thick,blue,-] (-2,0) ..controls  (-1,1) .. (0,0);
\end{scope}

\begin{scope}[xshift=-4cm, yshift=0cm]
\node at (-2,0) {$\bullet$}; 
\node at (-1,0) {$\bullet$}; 
\node at (0,0) {$\bullet$}; 
\node at (1,0) {$\bullet$}; 
\node at (2,0) {$\bullet$}; 
\draw[thick,blue,-] (-1,0) ..controls  (.5,1.25) .. (2,0);
\end{scope}
\begin{scope}[xshift=4cm, yshift=0cm]
\node at (-2,0) {$\bullet$}; 
\node at (-1,0) {$\bullet$}; 
\node at (0,0) {$\bullet$}; 
\node at (1,0) {$\bullet$}; 
\node at (2,0) {$\bullet$}; 
\draw[thick,blue,-] (-2,0) ..controls  (-.5,1.25) .. (1,0);
\end{scope}

\begin{scope}[xshift=0cm, yshift=-4cm]
\node at (-2,0) {$\bullet$}; 
\node at (-1,0) {$\bullet$}; 
\node at (0,0) {$\bullet$}; 
\node at (1,0) {$\bullet$}; 
\node at (2,0) {$\bullet$}; 
\draw[thick,blue,-] (-2,0) ..controls  (0,1.25) .. (2,0);
\end{scope}

\draw[black,-] (-12, 8) to (0,-4);
\draw[black,-] (12, 8) to (0,-4);
\draw[black,-] (-4, 8) to (4,0);
\draw[black,-] (4, 8) to (-4,0);
\draw[black,-] (4, 8) to (8,4);
\draw[black,-] (-4, 8) to (-8,4);
\end{tikzpicture}
\end{center}
\caption{The Stirling poset $\mc{A}_{5,1}$.}
\label{F:fishnet}
\end{figure}

\begin{Theorem}\label{T:(n,2) is boolean}
For all integers $n \geq 2$, the $(n,2)$-th Stirling poset
$\mc{A}_{n,2}$ is isomorphic to $B(n-1) - \{\{1,\dots, n-1\}\}$,
where $B(n-1)$ is the boolean lattice 
of all subsets of $\{1,\dots, n-1\}$.
\end{Theorem}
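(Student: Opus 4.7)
The plan is to exhibit an explicit bijection $\phi \colon \mc{A}_{n,2} \to B(n-1) \setminus \{\{1,\dots,n-1\}\}$ and then verify it is an order isomorphism, using the rook-monoid identification of Theorem~\ref{T:Borelmonoid} as the bridge. For $\pi \in \mc{A}_{n,2}$ I would set
\[
\phi(\pi) \;=\; \bigl\{\, i \in \{1,\dots,n-1\} \;:\; \{i,i+1\} \text{ is an arc of } \pi \,\bigr\},
\]
equivalently the set of indices where $i$ and $i+1$ lie in the same chain of $\pi$.

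First I would check that $\phi$ is a bijection. If $\phi(\pi) = \{1,\dots,n-1\}$ then every pair $\{i,i+1\}$ is an arc, so $\pi$ is a single chain, contradicting $\pi \in \mc{A}_{n,2}$. Conversely, any proper subset $S \subsetneq \{1,\dots,n-1\}$ determines a unique two-chain diagram: place vertex $1$ in the first chain and toggle the chain-assignment at each ``boundary'' index $i \notin S$; since at least one toggle occurs, exactly two chains appear. The cardinalities agree since $|\mc{A}_{n,2}| = S(n,2) = 2^{n-1}-1$.

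For the order-preservation I would pass through the chain of identifications $(\mc{A}_n,\prec) \cong (B_n^{nil},\leq) \cong (B_{n-1},\leq)$ furnished by Theorem~\ref{T:Borelmonoid} together with the natural isomorphism $B_n^{nil}\cong B_{n-1}$ stated just before item~5 of the introduction, which amounts to shifting columns by one. Under this composite the adjacent arc $\{i,i+1\}$ of $\pi$ corresponds precisely to a diagonal $1$ at position $(i,i)$ in the corresponding $(n-1)\times(n-1)$ upper-triangular rook, so $\phi(\pi)$ is just the \emph{diagonal support} of that rook. A two-chain diagram has $n-2$ arcs, so $\mc{A}_{n,2}$ is carried bijectively onto the top-rank stratum $B_{n-1,n-2} \setminus B_{n-1,n-3}$ of the upper-triangular rook monoid. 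The theorem thus reduces to checking that the Bruhat--Chevalley--Renner order on this stratum is transported by diagonal support to the Boolean inclusion order on $B(n-1)\setminus\{\{1,\dots,n-1\}\}$.

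The main obstacle is this last reduction. Item~6 already guarantees that the stratum is graded with a unique minimum (the rook whose $n-2$ ones are all off the diagonal, matching $\emptyset$) and with $\binom{n-1}{n-2} = n-1$ maxima (matching the $(n-2)$-element subsets of $\{1,\dots,n-1\}$), so only the cover-by-cover correspondence remains. I would establish it by analyzing the three cover rules restricted to $\mc{A}_{n,2}$. Rule~$3$ cannot be used, because it changes the number of chains. The key combinatorial observation is that in a two-chain diagram any crossing arcs $\{a,b\}$ and $\{c,d\}$ with $a<c<b<d$ must satisfy $b=c+1$, since any vertex strictly between $c$ and $b$ would otherwise be forced to lie in both blocks; hence Rule~$2$ creates the adjacent arc $\{c,c+1\}$ and adds exactly one element to $\phi(\pi)$. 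A parallel forced-position argument shows that every minimal valid Rule~$1$ shortening of an arc $\{i,j\}$ must land either at $j'=i+1$ (yielding the new adjacent arc $\{i,i+1\}$) or at $i'=j-1$ (yielding $\{j-1,j\}$), so each such cover also adds exactly one index to $\phi(\pi)$. Combined with the bijection of the first two paragraphs, this finishes the isomorphism.
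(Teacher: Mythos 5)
Your map $\phi$ is, after transporting through the dictionary $\varphi$ of Theorem~\ref{T:Borelmonoid}, exactly the map $\psi(x)=\{a_i:\ a_i=i\}$ used in the paper's proof (an adjacent arc $\{i,i+1\}$ corresponds to $\sigma_{i+1}=i$, i.e.\ to the diagonal entry $a_i=i$ of the associated $(n-1)\times(n-1)$ rook), and your bijection argument and your analysis of Rules 1--3 on two-chain diagrams are correct: in a two-chain diagram every vertex strictly between the endpoints of an arc lies in the other chain, which forces crossing arcs to satisfy $b=c+1$ and forces every admissible shortening to land at $i+1$ or $j-1$, so every covering relation of $\mc{A}_{n,2}$ adds exactly one index to $\phi(\pi)$.

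The gap is in the last sentence. What you have proved is that $\phi$ is an order-preserving, rank-preserving bijection (covers of $\mc{A}_{n,2}$ go to covers of $B(n-1)\setminus\{\{1,\dots,n-1\}\}$). That is strictly weaker than being a poset isomorphism: a bijection can send every cover to a cover and still fail to reflect the order (the identity map from an antichain onto a chain of the same size sends covers to covers vacuously). To finish you must also show that every Boolean cover $S\lessdot S\cup\{c\}$ is realized, i.e.\ that for each $\pi\in\mc{A}_{n,2}$ with $|\phi(\pi)|=r\leq n-3$ and each $c\notin\phi(\pi)$ there is a cover $\pi\prec\pi'$ with $\phi(\pi')=\phi(\pi)\cup\{c\}$ --- equivalently, that $\pi$ has exactly $n-1-r$ upper covers. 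This requires a further case analysis (when $c$ and $c+1$ lie in different chains, one must produce the arc $\{c,c+1\}$ via Rule~2 if $c$ has a right-arc and $c+1$ a left-arc, and via the appropriate Rule~1 shortening otherwise, checking in each case that the crossing-number side conditions of the rules are met). The paper closes this same gap differently: it shows inductively that each lower interval $[x_0,x]$ is isomorphic to $B(r)$ by counting that $x$ has exactly $r$ lower covers, one for each fixed point of $x$. Either route works, but one of them must be carried out; as written, your argument only yields that the Stirling order on $\mc{A}_{n,2}$ is \emph{refined by} the Boolean order, not that the two coincide.
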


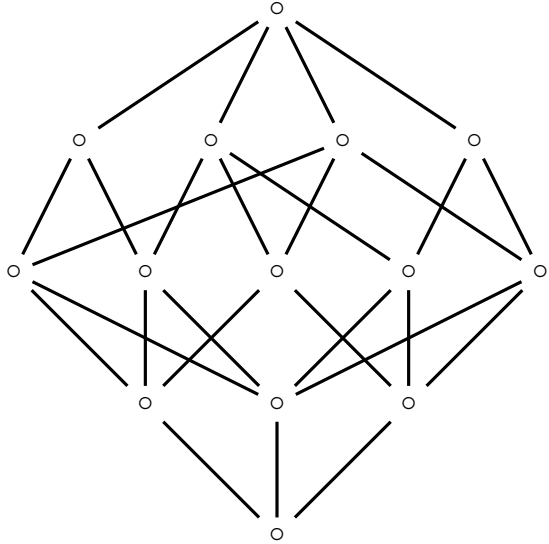
\begin{figure}[htp]
\begin{center}
\begin{tikzpicture}[scale=.35]
\node at (0,20) (e1) {$\circ$};
\node at (-7.5,15) (d1) {$\circ$};
\node at (-2.5,15) (d2) {$\circ$};
\node at (2.5,15) (d3) {$\circ$};
\node at (7.5,15) (d4) {$\circ$};
\node at (-10,10) (c1) {$\circ$};
\node at (-5,10) (c2) {$\circ$};
\node at (0,10) (c3) {$\circ$};
\node at (5,10) (c4) {$\circ$};
\node at (10,10) (c5) {$\circ$};
\node at (-5,5) (b1) {$\circ$};
\node at (0,5) (b2) {$\circ$};
\node at (5,5) (b3) {$\circ$};
\node at (0,0) (a) {$\circ$};

\draw[-, very thick] (a) to (b1);
\draw[-, very thick] (a) to (b2);
\draw[-, very thick] (a) to (b3);
\draw[-, very thick] (b1) to (c1);
\draw[-, very thick] (b1) to (c2);
\draw[-, very thick] (b1) to (c3);
\draw[-, very thick] (b2) to (c1);
\draw[-, very thick] (b2) to (c2);
\draw[-, very thick] (b2) to (c4);
\draw[-, very thick] (b2) to (c5);
\draw[-, very thick] (b3) to (c3);
\draw[-, very thick] (b3) to (c4);
\draw[-, very thick] (b3) to (c5);
\draw[-, very thick] (c1) to (d1);
\draw[-, very thick] (c1) to (d3);
\draw[-, very thick] (c2) to (d1);
\draw[-, very thick] (c2) to (d2);
\draw[-, very thick] (c3) to (d2);
\draw[-, very thick] (c3) to (d3);
\draw[-, very thick] (c4) to (d2);
\draw[-, very thick] (c4) to (d4);
\draw[-, very thick] (c5) to (d3);
\draw[-, very thick] (c5) to (d4);
\draw[-, very thick] (d1) to (e1);
\draw[-, very thick] (d2) to (e1);
\draw[-, very thick] (d3) to (e1);
\draw[-, very thick] (d4) to (e1);

\end{tikzpicture}
\end{center}
\caption{A non-lattice maximal subinterval in $\mc{A}_{5,3}$.}
\label{F:nonlattice}
\end{figure}

The arc-diagram poset $\mc{A}_n$ contains  
many interesting (Stirling) posets.
But it has more in it; we will justify our statement in our next result.
Let us state the relevant terminology here. 
By a partial flag variety we mean a quotient variety 
of the form $\mt{GL}_n/\mt{P}$, where $\mt{P}$ is a 
closed subgroup containing $\mt{B}_n$. A Schubert 
variety is the Zariski closure of an orbit of $\mt{B}_n$
on the partial flag variety. Note that $\mt{B}_n$ 
acts on $\mt{GL}_n/\mt{P}$ via left multiplication.

\begin{figure}[h]
\begin{center}
\resizebox{8cm}{6cm}{
\begin{tikzpicture}

\begin{scope}[scale=.45, yshift=6cm]
\node at (-10,0) {$W(n) = $};
\node at (-6,0) {$\bullet$};
\node at (-4,0) {$\bullet$};
\node at (-2,0) {$\dots$};
\node at (0,0) {$\bullet$};
\node at (2,0) {$\bullet$};
\node at (4,0) {$\bullet$};
\node at (6,0) {$\bullet$};
\node at (8,0) {$\dots$};
\node at (10,0) {$\bullet$};
\node at (-6,-.65) {$1$};
\node at (-4,-.65) {$2$};
\node at (-.45,-.65) {$n$};
\node at (1.5,-.65) {$n+1$};
\node at (4,-.65) {$n+2$};
\node at (6.5,-.65) {$n+3$};
\node at (10.5,-.65) {$2n$};
\draw[thick,blue,-] (-6,0) ..controls  (-5,1) .. (-4,0);
\draw[thick,blue,-] (0,0) ..controls  (1,1) .. (2,0);
\end{scope}

\begin{scope}[scale=.45, yshift=3cm]
\node at (-6,0) {$Z(n) = $};
\node at (0,0) {$\dots$};
\node at (-2,0) {$\bullet$};
\node at (-4,0) {$\bullet$};
\node at (2,0) {$\bullet$};
\node at (-4,-.65) {$1$};
\node at (-2,-.65) {$2$};
\node at (2,-.65) {$2n$};
\end{scope}

\begin{scope}[scale=.45, yshift=-2.5cm]
\node at (-9,0) {$X(n) = $};
\node at (-6,0) {$\bullet$};
\node at (-4,0) {$\bullet$};
\node at (-2,0) {$\bullet$};
\node at (0,0) {$\dots$};
\node at (2,0) {$\bullet$};
\node at (4,0) {$\bullet$};
\node at (6,0) {$\bullet$};
\node at (-6,-.65) {$1$};
\node at (-4,-.65) {$2$};
\node at (-2,-.65) {$3$};
\node at (1.5,-.65) {$2n-2$};
\node at (4.4,-.65) {$2n-1$};
\node at (6.5,-.65) {$2n$};
\draw[thick,blue,-] (-6,0) ..controls  (0,4) .. (6,0);
\draw[thick,blue,-] (-4,0) ..controls  (0,3) .. (4,0);
\draw[thick,blue,-] (-2,0) ..controls  (0,2) .. (2,0);
\end{scope}

\begin{scope}[scale=.45, xshift=8cm, yshift=-8cm]
\node at (-16,0) {$Y(n) = $};
\node at (-14,0) {$\bullet$};
\node at (-12,0) {$\bullet$};
\node at (-10,0) {$\dots$};
\node at (-8,0) {$\bullet$};
\node at (-6,0) {$\bullet$};
\node at (-4,0) {$\bullet$};
\node at (-2,0) {$\dots$};
\node at (0,0) {$\bullet$};
\node at (-14,-.65) {$1$};
\node at (-12,-.65) {$2$};
\node at (-8.5,-.65) {$n$};
\node at (-6,-.65) {$n+1$};
\node at (-3.25,-.65) {$n+2$};
\node at (0,-.65) {$2n$};
\draw[thick,blue,-] (-14,0) ..controls  (-10,3) .. (-6,0);
\draw[thick,blue,-] (-12,0) ..controls  (-8,3) .. (-4,0);
\draw[thick,blue,-] (-8,0) ..controls  (-4,3) .. (0,0);
\end{scope}

\end{tikzpicture}
}
\end{center}
\caption{}
\label{F:allofthem}
\end{figure}

\begin{Theorem}\label{T:intervals}
Let $X(n),Y(n)$, and $Z(n)$, and be as in Figure~\ref{F:allofthem}. 
In addition, let $S_n$ denote the symmetric group on $\{1,\dots,n\}$. 
Then the following statements hold true: 
\begin{enumerate}
\item The interval $([Y(n),X(n)],\prec)$ in $\mc{A}_{2n}$ is isomorphic to $(S_n,\leq)$. 
\item The interval $([Z(n),Y(n)],\prec)$ in $\mc{A}_{2n}$ and is isomorphic to $(B_n,\leq)$.
\item The interval $([Z(n),X(n)],\prec)$ in $\mc{A}_{2n}$ is isomorphic to $(R_n,\leq)$. 
\item The interval $([Y(n),W(n)],\prec)$ in $\mc{A}_{2n}$ is isomorphic to the inclusion 
poset of Borel orbit closures in a Schubert variety.
\end{enumerate}
\end{Theorem}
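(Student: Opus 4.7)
The plan is to apply Theorem~\ref{T:Borelmonoid} to translate each of the four claims into an assertion about a specific interval in the Bruhat-Chevalley-Renner poset on the nilpotent rook monoid $B_{2n}^{nil}$, which is isomorphic as a monoid to $B_{2n-1}$. Under the bijection between arc-diagrams and nilpotent rooks, the rooks $\sigma_Z,\sigma_Y,\sigma_X,\sigma_W$ corresponding to the diagrams $Z(n),Y(n),X(n),W(n)$ have ones at the empty set, at $\{(i,n+i)\}_{i=1}^n$, at $\{(i,2n+1-i)\}_{i=1}^n$, and at $\{(1,2),(n,n+1)\}$ respectively.

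For parts (3) and (2), the rank-based characterization of the Bruhat-Chevalley-Renner order forces any element below $\sigma_X$ (respectively $\sigma_Y$) to have its support confined to the upper-right $n\times n$ block. The natural embedding $\iota\colon R_n\hookrightarrow B_{2n}^{nil}$ (respectively $\iota\colon B_n\hookrightarrow B_{2n}^{nil}$), placing an $n\times n$ rook into that block, yields an order-preserving bijection onto the corresponding lower interval; in particular $\iota$ sends the longest element $w_0\in R_n$ to $\sigma_X$ and the identity $I_n\in B_n$ to $\sigma_Y$. In case (2), the vanishing constraint $R_{\sigma_Y}(i,n+j)=0$ for $j<i$ translates precisely to upper-triangularity on the block, distinguishing $B_n$ from $R_n$. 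For (1), any $\sigma\in[\sigma_Y,\sigma_X]$ must have rank exactly $n$ with support in the upper-right block, hence corresponds to an $n\times n$ permutation matrix, and the restricted order is the classical Bruhat order on $S_n$. The depth-index check $\mathtt{t}(X(n))-\mathtt{t}(Y(n))=n^2-n(n+1)/2=\binom{n}{2}$ recovers the length of the longest element of $S_n$.

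For part (4), we pass through the monoid isomorphism $B_{2n}^{nil}\cong B_{2n-1}$, under which $\sigma_W$ corresponds to the rank-$2$ idempotent $e=E_{1,1}+E_{n,n}\in B_{2n-1}$. Appealing to the general correspondence in the theory of reductive algebraic monoids between certain intervals in the Renner monoid and Borel orbits on associated partial flag varieties, the interval $[\sigma_Y,\sigma_W]$ is identified with the inclusion poset of $\mt{B}_{2n-1}$-orbit closures inside a Schubert subvariety of $\mt{GL}_{2n-1}/\mt{P}_e$, where $\mt{P}_e$ is the parabolic stabilizing the image of $e$ and the Schubert subvariety is determined by the relative position of $\sigma_Y$ with respect to $e$.

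The main obstacle lies in part (4): precisely identifying the parabolic $\mt{P}_e$, the ambient partial flag variety, and the Schubert subvariety whose Borel orbit closure poset matches $[\sigma_Y,\sigma_W]$ for general $n$, as well as justifying the correspondence itself in this setting. By contrast, parts (1)--(3) reduce to direct rank-matrix computations combined with well-known identifications of the Bruhat-Chevalley-Renner order on $R_n$, $B_n$, and $S_n$.
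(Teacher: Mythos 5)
Your treatment of parts (1)--(3) is essentially the paper's: both arguments pass through the isomorphism of Theorem~\ref{T:Borelmonoid} to work in $(B_{2n}^{nil},\leq)$, observe that every element below $\varphi(X(n))=(0,\dots,0,n,n-1,\dots,1)$ has its support confined to the upper-right $n\times n$ block, and then identify the induced order on that block with the Bruhat--Chevalley--Renner order on $R_n$, $B_n$, and $S_n$ respectively. One caveat: your phrase ``the rank-based characterization forces the support into the block'' is not by itself a justification (rank says nothing about where the support sits); the honest argument runs through Theorem~\ref{T:PPR}, noting that in passing downward in the order nonzero column indices can only move rightward and values can only decrease, so support stays in rows $\leq n$ and columns $\geq n+1$. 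The paper is similarly terse on this point, so I count (1)--(3) as correct and the same route.

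Part (4) has a genuine gap, in two respects. First, you have misidentified $W(n)$: its arcs are $\{1,2\},\{2,3\},\dots,\{n,n+1\}$ (a single chain on the first $n+1$ vertices), so $\varphi(W(n))=(0,1,2,\dots,n,0,\dots,0)$ has rank $n$. Under your reading ($\sigma_W$ supported on $\{(1,2),(n,n+1)\}$, rank $2$) the interval $[Y(n),W(n)]$ would be \emph{empty}, since rank is monotone along the order (Remark~\ref{R:simple but useful}) and $\varphi(Y(n))$ already has rank $n$; so the statement could not even be formulated. Second, the ``general correspondence in the theory of reductive algebraic monoids'' you invoke for the identification with Borel orbit closures in a Schubert variety is not an off-the-shelf citation, and you acknowledge you cannot pin down the parabolic or the Schubert variety. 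The paper's argument is concrete at exactly this point: it takes the Ferrers board $\lambda=(2n-1,2n-2,\dots,n-1)$, realizes $M_\lambda\subset\mt{Mat}_{2n}$ as a $\mt{B}_{2n}\times\mt{B}_{2n}$-stable subvariety, invokes Ding and Develin--Martin--Reiner to identify $\mt{B}_n\backslash M_\lambda$ with a Schubert variety in $\mt{GL}_{2n-1}/\mt{P}_\lambda$, and then uses the fact that the Borel orbits of a Schubert variety are its cells, so that $[\varphi(Y(n)),\varphi(W(n))]$ is read off as the inclusion poset of cell closures, with $\varphi(W(n))$ the open cell and $\varphi(Y(n))$ the point. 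Without supplying this (or an equivalent) identification, part (4) remains unproved.
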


Our final remark concerns the length generating function 
of the $(n,k)$-th Stirling poset. Let us denote by $\mathtt{t}_k$
the length function on $\mc{A}_{n,k}$.
Clearly, $\mathtt{t}_k$ is equal to an appropriate shift of $\mathtt{t}$. More precisely, 
let $A$ be an element from $\mc{A}_{n,k}$.  
If we view $A$ as an element of $\mc{A}_n$, then 
it is clear that $\mathtt{t}(A)=\mathtt{t}_k(A) + {k\choose 2}$
since the unique minimum of $\mc{A}_{n,k}$ has depth-index ${k \choose 2}$.  
To be able to treat all length generating functions $\mathtt{t}_k$ ($k=0,\dots, n$) together, 
we define 
\begin{align}\label{A:analog of stirling}
{n \bb k} := \sum_{A \in \mc{A}_{n,k}} q^{\mathtt{t}(A)}.
\end{align}
Obviously, (\ref{A:analog of stirling}) is a $q$-analog of the Stirling numbes of the second kind.

\begin{Theorem}\label{T:gen func}
For positive integers $n$ and $k$ such that $0\leq k \leq n+1$
the following recurrence holds true:
\begin{align*}
{n+1 \bb k} = q^{k} {n \bb k} + [n+1-k]_q q^{k} {n\bb k-1},
\end{align*}
where $[k]_q$ is the polynomial $1+q+\cdots + q^{k-1}$. 
The initial conditions are ${m \bb 0 } = 1$ for all $m\in \N$.
In addition, we assume that 
$
{m \bb k } = 0\; \text{ if $k<0$ or $k > m$.}
$
\end{Theorem}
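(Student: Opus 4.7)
The plan is to prove the recurrence bijectively by splitting $\mc{A}_{n+1,k}$ according to the role of the last vertex $n+1$, and then tracking how $\mathtt{t}$ changes under each natural deletion map. Concretely, let $\mathcal{C}_1\subset \mc{A}_{n+1,k}$ consist of those arc-diagrams in which $n+1$ is an isolated rightmost vertex (equivalently, $n+1$ is not the right endpoint of any arc), and let $\mathcal{C}_2$ consist of those in which $n+1$ is the right endpoint of some arc $\alpha=\{i,n+1\}$. Clearly $\mc{A}_{n+1,k}=\mathcal{C}_1\sqcup \mathcal{C}_2$, and the hope is that summing $q^{\mathtt{t}}$ over these two pieces produces the two summands on the right-hand side.

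For Case 1, the map $A\mapsto A'$ obtained by deleting the vertex $n+1$ gives a bijection $\mathcal{C}_1\leftrightarrow \mc{A}_{n,k}$. No arcs are removed and no new arcs appear above any vertex or arc, so reading off Definition~\ref{D:statdd}, only the first sum $\sum_{i=1}^k (N-i)$ changes, increasing by exactly $k$. Hence $\sum_{A\in \mathcal{C}_1}q^{\mathtt{t}(A)}=q^k\,{n \bb k}$. For Case 2, removing both the arc $\alpha=\{i,n+1\}$ and the vertex $n+1$ yields a bijection between $\mathcal{C}_2$ and the set of pairs $(A',i)$ with $A'\in\mc{A}_{n,k-1}$ and $i$ a rightmost vertex of some chain of $A'$ (since these are exactly the vertices with no outgoing arc). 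A direct comparison with Definition~\ref{D:statdd} gives
\[
\mathtt{t}(A)-\mathtt{t}(A')\;=\;i\;+\;\bigl|\{\beta\in A'\,:\,\text{left endpoint of }\beta\text{ is }>i\}\bigr|,
\]
using that $depth_A(\alpha)=0$ (nothing can sit above $\alpha$ because its right endpoint is the last vertex), that $\alpha$ sits above a vertex $v_j$ or an arc $\beta=\{r,s\}$ precisely when $j>i$ or $r>i$ respectively, and that the shift in $\sum_{i=1}^K(N-i)$ contributes $n$, which neatly cancels the $n-i$ vertices above $i$ contributing to the vertex-depth decrease.

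The key step is to identify this exponent with $k-1+j$ when $i$ is the $j$-th rightmost chain endpoint from the left. Label the chains of $A'$ so that their rightmost vertices are $r_1<r_2<\cdots<r_{n-k+1}$. The decisive observation is that the $n$ vertices of $A'$ partition into $n-k+1$ right endpoints of chains and $k-1$ left endpoints of arcs. Consequently, among the $r_j$ vertices $\leq r_j$ exactly $j$ are chain-ends and $r_j-j$ are arc-starts; hence among the $n-r_j$ vertices $>r_j$ the number of arc-starts is $(k-1)-(r_j-j)=k-1+j-r_j$, and therefore
\[
r_j+\bigl|\{\beta\in A'\,:\,\text{left endpt}(\beta)>r_j\}\bigr|\;=\;r_j+(k-1+j-r_j)\;=\;k-1+j.
\]
Summing over $j=1,\dots,n-k+1$ gives the contribution $q^{k}+q^{k+1}+\cdots+q^{n}=q^k[n+1-k]_q$ for each $A'$, so $\sum_{A\in\mathcal{C}_2}q^{\mathtt{t}(A)}=q^k[n+1-k]_q\,{n \bb k-1}$, and adding Case 1 yields the recurrence. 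The initial condition ${m \bb 0}=1$ reflects the unique empty-arc diagram (whose depth-index is zero), and the convention ${m \bb k}=0$ for $k<0$ or $k>m$ is built in.

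The main obstacle is the combinatorial identity in the last paragraph: a priori the exponent $\mathtt{t}(A)-\mathtt{t}(A')$ depends intricately on the global structure of $A'$, yet it must turn out to depend only on the rank $j$ of the chosen chain-endpoint. The clean bookkeeping comes from the dichotomy ``right endpoint of a chain versus left endpoint of an arc'' on the vertex set of $A'$, which converts a depth-sum into a simple counting argument; once this is in hand, everything else is routine.
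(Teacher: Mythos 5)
Your proof is correct, and it takes a genuinely different route from the paper. The paper does not argue on arc-diagrams at all: it invokes the Garsia--Remmel identification $S_{n,k}(q)=R_{n-k}(\delta_n,q)$ of $q$-Stirling numbers with $q$-rook polynomials on the staircase board, converts the depth-index into the Garsia--Remmel rook statistic via the dimension formula $\dim(\mt{B}_n\sigma\mt{B}_n)={n\choose 2}-\text{stat}(\mathbf{r})$ of Lemma~\ref{L:CanRenner08}, obtaining ${n+1 \bb k}=q^{{n+1\choose 2}}R_k(\delta_{n+1},1/q)$, and then transports the known recurrence $S_{n+1,k}(q)=q^{k-1}S_{n,k-1}(q)+[k]_qS_{n,k}(q)$ through this substitution. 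Your argument is instead a self-contained bijection on $\mc{A}_{n+1,k}$ according to whether the last vertex is isolated or is the right endpoint of an arc, with the exponent tracked directly from Definition~\ref{D:statdd}. I checked the bookkeeping: the shift of the first sum by $n$, the vertex-depth increase by $n-i$, the arc-depth increase by the number of arcs with left endpoint exceeding $i$, and the dichotomy ``chain right-endpoint versus arc left-endpoint'' giving $r_j+(k-1)-(r_j-j)=k-1+j$ are all right, and the edge cases $k=0$ and $k=n+1$ come out consistently with the stated conventions. What the paper's route buys is brevity given the external results of Garsia--Remmel and Can--Renner; what yours buys is independence from those references and an explicit combinatorial explanation of each summand in the recurrence (in effect it also furnishes an independent proof of the equivalence with the Garsia--Remmel polynomials recorded in equation~(\ref{A:ours vs GR})).
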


For various $(p,q)$-analogs of Stirling numbers of the second kind, 
see Wachs and White's influential article~\cite{WachsWhite}. 
Also, for many other poset theoretic properties of set-partitions 
(under refinement ordering) we recommend the excellent expository
article~\cite{WachsExpository} by Wachs.

We now describe the structure of our paper. 
We designed Section~\ref{S:Preliminaries} so that it gives 
the necessary background for the subsequent sections. In particular,
we review the concepts of EL-shellability, rook monoid, 
and recall some characterizations of the Bruhat-Chevalley-Renner
ordering together with its length functions. 
The Section~\ref{S:proofof2} is devoted to a proof of 
Theorem~\ref{T:Borelmonoid} and to a proof of the first 
part of Theorem~\ref{T:main1}. The second part of Theorem~\ref{T:main1}
is given in the subsequent Section~\ref{S:Stats},
where we prove that the length function on $B_n$ is equivalent
to the depth-index function $\mathtt{t}$. In the same section,
we introduce another statistic, denoted by $\mathtt{c}$, 
and called the ``crossing-index of an arc-diagram.''
We prove that $\mathtt{c} = \mathtt{t}$. 
Section~\ref{S:Subposets} is the most algebro-geometric section 
of our paper. We prove six properties that we mentioned above 
about the variety $\BM{n}$ and its subvarieties. 
The proof of Theorem~\ref{T:(n,2) is boolean} is recorded therein 
as well. In Section~\ref{S:Intervals} we prove 
Theorem~\ref{T:intervals} which is a characterization of some 
special subintervals of $\mc{A}_n$. 
Finally, in Section~\ref{S:Recurrence} we analyze the 
length generating function of the posets $\mc{A}_{n,k}$ 
and prove Theorem~\ref{T:gen func}.

\section{Preliminaries}\label{S:Preliminaries} 


\subsection{Set partitions}
\label{S:Preliminaries1}

Although we do not use this fact in the sequel, 
let us mention that 
the number of set partitions from $\Pi_n$ 
is given by the $n$-th Bell number,
which is denoted by $b_n$. 
The exponential generating series of $b_n$ is given by $e^{e^x-1}$. 

\subsection{EL-shellable posets}

A finite graded poset $P$ with a maximum and a minimum element is called {\em EL-shellable}, 
if there exists a map $f=f_{\varGamma}: C(P) \rightarrow \varGamma$ between the 
set of covering relations $C(P)$ of $P$ into a totally ordered set $\varGamma$ satisfying 
\begin{enumerate}
\item in every interval $[x,y] \subseteq P$ of length $k>0$ there exists a unique saturated chain 
$\mathcal{C}:\ x_0=x < x_1 < \cdots < x_{k-1} < x_k=y$ such that the entries of the sequence
\begin{align}\label{JordanHolder}
f(\mathcal{C}) = (f(x_0,x_1), f(x_1,x_2), \dots , f(x_{k-1},x_k))
\end{align}
is weakly increasing.
\item The sequence $f(\mathcal{C})$ of the unique chain 
$\mathcal{C}$ from (1) is the lexicographically 
smallest among all sequences of the form 
$(f(x_0,x_1'), f(x_1',x_2'), \dots , f(x_{k-1}',x_k))$, 
where $x_0 < x_1' < \cdots < x_{k-1}' < x_k$. 
\end{enumerate}

The {\em order complex} of a poset $P$ is the abstract simplicial complex $\Delta(P)$ 
whose simplicies are the chains in $P$. 
For an EL-shellable poset the order complex is shellable, 
in particular it implies that $\Delta(P)$ is Cohen-Macaulay \cite{Bjorner80}. 
These, of course, are among the most desirable properties of a topological space.

\begin{Remark}
In the sequel, specifically for the Stirling posets, we will relax the unique maximum 
element condition in the definition of EL-shellability. 
\end{Remark}

\begin{Remark}
There are various lexicographic shellability conditions 
in the literature and the EL-shellability defined here is among the stronger ones. 
See~\cite{BjornerWachs}
\end{Remark}

\subsection{Algebraic monoids}

In this section we provide the bare minimum background on reductive monoids 
to help the reader to understand the geometric/group theoretic 
angle of our work. We start with defining (more general) algebraic monoids. 

Let $k$ be an algebraically closed field and let $M$ be an irreducible variety
with a morphism $a:M\times M \rightarrow M$ and an element $e\in M$ 
such that 
\begin{itemize}
\item $a(x,a(y,z))= a(a(x,y),z)$ for all $x,y,z$ from $M$;
\item $a(e,x)=a(x,e)=x$ for all $x$ from $M$.
\end{itemize}
Thus, $M$ is an algebraic monoid. Let $G$ denote the group of invertible 
elements in $M$. If $G$ is a reductive algebraic group, then $M$ is called
a reductive monoid. 

Let $E(M)$ denote the set of idempotents of $M$. There is an important
partial order on $E(M)$ that is defined by 
\begin{align}\label{A:idempotentorder}
e\leq f \iff ef = e = fe. 
\end{align}

For reductive monoids, there exists a finite sublattice 
$\varLambda \subset E(M)$ 
such that 
\begin{enumerate}
\item $M = \bigsqcup_{e\in \varLambda} G e G$;
\item $e \leq f \iff G e G \subset \overline{GfG}$ for $e,f\in \varLambda$.
\end{enumerate}
In the second item, the bar stands for closure in Zariski topology.
The lattice $\varLambda$ (unique up to conjugation) is called the cross 
section lattice of $M$ and it uniquely determines many important subgroups
of $G$. For example, 
\begin{align}\label{A:Borel}
B = \{ g\in G:\ ge = ege \ \text{ for all } e\in \varLambda \}
\end{align}
is a Borel subgroup and its opposite $B^-$ is given by
\begin{align}\label{A:opBorel}
B^- = \{ g\in G:\ eg = ege \ \text{ for all } e\in \varLambda \}.
\end{align}
The maximal torus of $B$ is 
\begin{align}\label{A:maxtorus}
T = \{ g\in G:\ ge = eg \ \text{ for all } e\in \varLambda \}
\end{align}

Let $N_G(T)$ denote the normalizer of $T$ in $G$ and let $W$ denote $N_G(T)/T$,
the {\em Weyl group} of $(G,T)$. Let $S\subset W$ be a generating system consisting
of simple reflections. It is well known that $W$ is a graded poset with the rank function
$\ell : W  \longrightarrow \Z $ defined by 
\begin{align}\label{A:rank function}
\ell (w) = \text{ dimension of the image of $BwB$ in $G/B$ }.
\end{align}

The reductive monoid $M$ has the {\em Renner decomposition} 
$$
M = \bigsqcup_{\sigma \in R(M)} B \sigma B 
$$
where $R(M)$ is a finite inverse semigroup having $W$ as its unit group. 
In fact, $R(M)= \overline{N_G(T)}/T$. 
We will call $R$ the Renner monoid of $(M,T)$. 
Extending the Bruhat-Chevalley 
ordering on the $W$, there is a natural graded partial order on the Renner monoid:
\begin{align}\label{A:Renner order}
\sigma \leq \tau \iff B\sigma B \subset \overline{B\tau B}
\end{align}
for $\sigma,\tau \in R(M)$. We will call (\ref{A:Renner order}) the Bruhat-Renner-Chevalley ordering.
\begin{Remark}\label{R:Renner=idempotent order}
If $\sigma$ and $\tau$ are two idempotents from $R(M)$, then 
$\sigma \leq \tau$ in (\ref{A:Renner order}) if and only if $\sigma \leq \tau$ in (\ref{A:idempotentorder}).
\end{Remark}

The Renner monoid $R(M)$, is an inverse semigroup. 
This means that for each element $x$ of $R(M)$ there 
exists a corresponding $x^*\in R(M)$ such that $x x^* x = x$ and $x^* x x^* = x^*$. 
An important commonality between all such monoids is that they admit a faithful linear semigroup
representation. More precisely, let $R_n$ denote the Renner monoid $R(\mt{Mat}_n)$. 
It is well known that $R_n$ is isomorphic to the inverse semigroup of 
all injective partial transformations on the set $\{1,\dots, n\}$.
Furthermore, if $R$ is an inverse semigroup, then for some $n$ there exists an injective 
semigroup homomorphism $\phi: R\rightarrow R_n$. 
Following our terminology from the introduction, will call $R_n$ the rook monoid 
since its elements can be viewed as rook placements on an $n\times n$ grid, 
where the nonzero entries of an element of $R_n$ are viewed as the non-attacking rook placements.
It is also possible to represent the elements of $R_n$ in one-line notation
and describe the covering relations of the Bruhat-Renner-Chevalley ordering in this context. 
We will briefly review this development.

Recall from \cite{Renner86} that the rank function on $R_n$ is given by 
\begin{equation*}
\ell( x )= \dim (B x B),\  x \in R_n.
\end{equation*}
There is a combinatorial formula for $\ell(x)$, $x\in R_n$.
To explain we represent elements of $R_n$ by $n$-tuples. 
For $x=(x_{ij}) \in R_n$ we define the sequence $(a_1,\dots,a_n)$ by
\begin{equation}\label{E:oneline}
a_j = 
\begin{cases}
0  &\text{if the $j$-th column consists of zeros,}\\
i &\text{if $x_{ij}=1$.}
\end{cases}
\end{equation}
By abuse of notation, we denote both the matrix and the sequence $(a_1,\dots,a_n)$ by $x$.
For example, the associated sequence of the partial permutation matrix 
\begin{equation*}
x=\begin{pmatrix}  
0 & 0 & 0 & 0 \\
0 & 0 & 0 & 0 \\
1 & 0 & 0 & 0 \\ 
0 & 0 & 1 & 0
\end{pmatrix}
\end{equation*}
is $x=(3,0,4,0)$.

Let $x=(a_1,\dots.,a_n)\in R_n$.  A pair $(i,j)$ of indices $1\leq i<j \leq n$ is called a {\em coinversion pair} for $x$, 
if $0< a_i < a_j$. We denote the number of conversion pairs of $x$ by $coinv(x)$. 
\begin{Example}
Let $x=(4,0,2,3)$. Then, the only coinversion pair for $x$ is $(3,4)$. Therefore, $coinv(x)=1$.
\end{Example}

In \cite{CanRenner}, it is shown that the dimension $\ell(x)=\dim(BxB)$ of an orbit $B x B$, $x\in R_n$ 
is given by
\begin{equation}\label{E:dimforminv}
\ell(x)  = (\sum_{i=1}^n a_i^*)  - coinv(x),\ \text{where}\
a_i^* =
\begin{cases}
a_i+n-i,  & \text{if}\  a_i\neq 0 \\
0,   & \text{if}\ a_i=0
\end{cases}
\end{equation}
Reformulating (\ref{E:dimforminv}) gives 
\begin{Proposition}\label{P: sum inv}
Let $x=(a_1,\dots, a_n)\in R_n$. Then 
\begin{align*}
\ell( x) = \sum a_i + inv (x),
\end{align*}
where $inv(x) = |\{(i,j):\ 1\leq i< j \leq n,\ a_i>a_j \} |$.
\end{Proposition}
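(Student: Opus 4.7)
The plan is to derive the reformulated identity directly from equation~(\ref{E:dimforminv}) by a careful counting argument that partitions the pairs $(i,j)$ with $1 \leq i < j \leq n$ according to the vanishing of $a_i$ and $a_j$. Everything will follow from expanding $\sum_i a_i^*$, pulling out $\sum_i a_i$, and matching the remaining index-count against $\text{inv}(x) + \text{coinv}(x)$.

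First, I would unpack the definition of $a_i^*$ to write
\begin{align*}
\sum_{i=1}^n a_i^* \;=\; \sum_{i : a_i \neq 0} (a_i + n - i) \;=\; \sum_{i=1}^n a_i \;+\; \sum_{i : a_i \neq 0} (n-i),
\end{align*}
where in the last equality I used that the terms with $a_i = 0$ contribute nothing to $\sum a_i$. Substituting this into~(\ref{E:dimforminv}) reduces the proposition to the combinatorial identity
\begin{align*}
\sum_{i : a_i \neq 0} (n-i) \;-\; \text{coinv}(x) \;=\; \text{inv}(x).
\end{align*}

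The key observation for proving this identity is that $\sum_{i : a_i \neq 0}(n-i)$ counts the number of pairs $(i,j)$ with $1 \leq i < j \leq n$ and $a_i \neq 0$. I would partition such pairs into three disjoint classes according to the value of $a_j$ relative to $a_i$: (a) $a_j = 0$; (b) $0 < a_j < a_i$; (c) $0 < a_i < a_j$. Class (c) is precisely the coinversion count $\text{coinv}(x)$, so it cancels on the left-hand side. On the other hand, since $a_i > a_j \geq 0$ in the definition of $\text{inv}(x)$ forces $a_i \neq 0$, the set of inversion pairs $\text{inv}(x)$ is exactly the disjoint union of classes (a) and (b). Hence classes (a)+(b) equal $\text{inv}(x)$, and the identity follows.

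I do not anticipate a real obstacle here; the statement is a bookkeeping reformulation of~(\ref{E:dimforminv}). The only care needed is in the case analysis that distinguishes inversions involving a zero entry from genuine inversions among nonzero entries, since the definition of $\text{inv}(x)$ permits $a_j = 0$ while $\text{coinv}(x)$ does not.
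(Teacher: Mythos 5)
Your proof is correct and follows exactly the route the paper intends: the paper simply states that Proposition~\ref{P: sum inv} is obtained by ``reformulating'' equation~(\ref{E:dimforminv}), and your counting argument (splitting the pairs counted by $\sum_{i: a_i\neq 0}(n-i)$ into the three classes and observing that class (c) is $coinv(x)$ while (a)$\cup$(b) is $inv(x)$, using that distinct nonzero entries of a rook vector never coincide) is precisely the bookkeeping that justifies that reformulation.
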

 
As a corollary of Proposition \ref{P: sum inv} we have 
\begin{Corollary}
Let $w=(a_1,\dots,a_n) \in S_n$ be a permutation. Then
$\ell(w) = {n+1 \choose 2}+inv(w)$.
\end{Corollary}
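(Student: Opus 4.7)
The plan is to derive this corollary as a direct specialization of Proposition~\ref{P: sum inv} to the subset $S_n \subset R_n$ of full-rank rooks. The proposition asserts that for any $x = (a_1, \ldots, a_n) \in R_n$, one has $\ell(x) = \sum_{i=1}^n a_i + inv(x)$, and the only extra ingredient I need is an evaluation of the sum $\sum a_i$ when $x$ happens to be a genuine permutation.

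First I would invoke Proposition~\ref{P: sum inv} with $x = w$ to obtain
\[
\ell(w) \;=\; \sum_{i=1}^n a_i \;+\; inv(w).
\]
Then I would observe that since $w \in S_n$, its one-line notation $(a_1, \ldots, a_n)$ is a bijection from $\{1,\dots,n\}$ to itself, and so the sequence $(a_1, \ldots, a_n)$ is a rearrangement of $(1, 2, \ldots, n)$. Consequently,
\[
\sum_{i=1}^n a_i \;=\; 1 + 2 + \cdots + n \;=\; \binom{n+1}{2}.
\]
Substituting this into the previous display yields $\ell(w) = \binom{n+1}{2} + inv(w)$, which is the claim.

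As a sanity check that the constant $\binom{n+1}{2}$ is correct rather than the more familiar $\binom{n}{2}$ from Coxeter group theory, note that when $w$ is the identity we have $inv(w) = 0$, so the formula forces $\ell(w) = \binom{n+1}{2}$. This matches the geometric meaning of $\ell$ used throughout the paper, namely $\ell(x) = \dim(BxB)$: for $w = \mathrm{id}$ the orbit $BwB = B$ is the Borel subgroup of upper triangular $n \times n$ matrices, whose dimension is exactly $n(n+1)/2 = \binom{n+1}{2}$. There is no genuine obstacle to the proof; it is a one-line specialization of Proposition~\ref{P: sum inv} together with the arithmetic identity for the sum of the first $n$ positive integers.
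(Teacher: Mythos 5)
Your proof is correct and follows exactly the route the paper intends: the corollary is stated as an immediate consequence of Proposition~\ref{P: sum inv}, and the only additional observation needed is that for a permutation the entries $a_1,\dots,a_n$ are a rearrangement of $1,\dots,n$, so $\sum a_i = \binom{n+1}{2}$. The sanity check via $\dim(B) = n(n+1)/2$ is a nice touch but not needed.
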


First concrete description of the Bruhat-Chevalley-Renner ordering on $R_n$ is given in \cite{PPR97}: 
\begin{Theorem}\label{T:PPR} 
Let $x = (a_1,\dots,a_n)$, $y=(b_1,\dots,b_n) \in R_n$. The Bruhat-Renner-Chevalley ordering on 
$R_n$ is the smallest partial order on $R_n$ generated by declaring 
$x \leq y$ if either 
\begin{enumerate}
\item there exists an $1 \leq i \leq n$ such that $b_i> a_i$ and $b_j = a_j$ for all $j\neq i$, or
\item  there exist  $1 \leq i < j \leq n$ such that $b_i=a_j,\ b_j=a_i$ with $b_i > b_j$, 
and for all $k\notin \{i,j\}$, $b_k = a_k$.
\end{enumerate}
\end{Theorem}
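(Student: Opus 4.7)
The plan is to establish both containments between the partial order generated by the two moves (1) and (2) and the Bruhat--Chevalley--Renner order on $R_n$, using the length function as the fundamental tool. The key external inputs would be Renner's theorem that $(R_n, \leq)$ is graded with rank function $\ell(x) = \dim(B_n x B_n)$, together with the explicit combinatorial formula $\ell(x) = \sum_i a_i + \operatorname{inv}(x)$ from Proposition~\ref{P: sum inv}.

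For the direction ``the generated order is contained in the BCR order,'' I would check each move separately. For move (1), one interprets changing $a_i$ to some $b_i > a_i$ as pushing a $1$ upward within column $i$ (or placing a new $1$ into a previously zero column), and produces a one-parameter family inside $\overline{B_n y B_n}$ which degenerates, as $t \to 0$, to a representative of $B_n x B_n$. This degeneration is carried out using a root subgroup acting by row operations. For move (2), the swap $a_i \leftrightarrow a_j$ with $b_i > b_j$ is realized by conjugating $y$ by a unipotent one-parameter subgroup in a suitable $2 \times 2$ block, and again one sends $t \to 0$ to land on $x$. The length formula of Proposition~\ref{P: sum inv} simultaneously confirms that $\ell(y) - \ell(x) = 1$ in each case: move (1) changes $\sum a_i$ by $b_i - a_i$ and $\operatorname{inv}$ in a prescribed way that, under the minimality built into the statement, compensates to $+1$; move (2) preserves $\sum a_i$ and increases $\operatorname{inv}$ by $1$ precisely because $b_i > b_j$.

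For the reverse containment ``BCR is contained in the generated order,'' I would use the gradedness of $(R_n, \leq)$ to reduce to matching covering relations. Given a cover $x \lessdot y$, I would invoke a rank/corner criterion for the BCR order (an analog of the tableau criterion for $S_n$, obtainable from the Bruhat decomposition of $\mathrm{Mat}_n$ by inspecting rank functions of truncated matrices). Combined with $\ell(y) = \ell(x) + 1$, this criterion forces the sequences $(a_1,\ldots,a_n)$ and $(b_1,\ldots,b_n)$ to differ in at most two coordinates, and a short case analysis then identifies each covering relation as precisely one instance of move (1) or (2).

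The main obstacle is the classification of covering relations. One must argue that if $x$ and $y$ differ in three or more coordinates then the relation cannot be a cover, by explicitly exhibiting an intermediate element $z$ with $x < z < y$. This is more delicate than in $S_n$ because zero columns allow values to appear or disappear rather than merely permute, so one needs to treat the ``fill in a zero column'' variant of move (1) on an equal footing with the ``raise an existing entry'' variant. Pinning down the correct minimality condition on $b_i$ in move (1), and the precise descent condition $b_i > b_j$ in move (2), both emerge from running this intermediate-element construction carefully and noting exactly which shapes of local modification fail to produce a strict $z$ strictly between $x$ and $y$.
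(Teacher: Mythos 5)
First, a point of comparison: the paper does not prove Theorem~\ref{T:PPR} at all --- it is quoted from Pennell--Putcha--Renner~\cite{PPR97}, and the finer analysis of covering relations is quoted from~\cite{CanRenner} as Lemmas~\ref{L:PPRcovering0} and~\ref{L:PPRcovering1}. So there is no in-paper proof to measure your proposal against; it has to stand on its own as a reconstruction of the cited result.

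As such, your outline is the right general shape (degenerations for one containment, a reduction to covering relations for the other), but it contains a concrete error and a large unfilled gap. The error: you treat the moves (1) and (2) of the statement as covering relations and assert that $\ell(y)-\ell(x)=1$, invoking a ``minimality built into the statement.'' There is no minimality in Theorem~\ref{T:PPR}: move (1) allows $b_i$ to exceed $a_i$ by an arbitrary amount, and even for move (2) the transposition can change $\operatorname{inv}$ by much more than $1$ depending on the entries $a_s$ with $i<s<j$ (this is exactly the content of the extra conditions in Lemmas~\ref{L:PPRcovering0} and~\ref{L:PPRcovering1}, which are separate statements). For the theorem as stated this is recoverable --- to show the generated order sits inside BCR you only need each move to give \emph{some} relation $B_nxB_n\subseteq\overline{B_nyB_n}$, not a cover, and your root-subgroup degenerations do that --- but the length bookkeeping as written is false. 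The gap: the reverse containment is where essentially all of the content lives, and your sketch defers it to an unproved ``rank/corner criterion'' for orbit-closure containment in $\mathrm{Mat}_n$ plus an unexecuted classification showing every BCR cover changes at most two coordinates. That criterion is itself a theorem of comparable depth to the one being proved, and the case analysis producing an intermediate $z$ when three or more coordinates differ (including the interaction with zero columns) is precisely the laborious part carried out in~\cite{PPR97} and~\cite{CanRenner}. As a proof the proposal is therefore incomplete; as a roadmap to the literature's argument it is broadly accurate once the cover/generator confusion is repaired.
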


The covering relations of the order are analyzed in detail in \cite{CanRenner}, 
and the following two lemmas are found out to be very useful.

\begin{Lemma} \label{L:PPRcovering0}
Let $x=(a_1,\dots,a_n)$ and $y = (b_1,\dots,b_n)$ be elements of $R_n$. 
Suppose that $a_k=b_k$ for all $k =\{ 1,\dots,\widehat{i},\dots,n\}$ and $a_i < b_i$. 
Then, $\ell(y) = \ell( x)+ 1$ if and only if either
\begin{enumerate}
\item $0=a_i$, $b_i=1$ and $a_j=b_j > 0$ for all $j > i$, or 
\item $0< a_i$ and $b_i = a_i +1$, or
\item there exists a sequence of indices $1 \leq j_1< \cdots < j_s < i$ such that the set 
$\{a_{j_1},\dots,a_{j_s}\}$ is equal to 
$\{a_i+1,\dots,a_i+s	\}$, and $b_i=a_i+s+1$.
\end{enumerate}
\end{Lemma}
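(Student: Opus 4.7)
The plan is to reduce the statement to a short arithmetic computation based on the closed formula $\ell(x) = \sum_k a_k + \mathrm{inv}(x)$ of Proposition~\ref{P: sum inv}. Setting $t := b_i - a_i > 0$, only inversion pairs involving position $i$ can contribute to $\mathrm{inv}(y) - \mathrm{inv}(x)$, so splitting these pairs into those $(k,i)$ with $k < i$ and those $(i,j)$ with $j > i$ yields the key identity
\[
\ell(y) - \ell(x) \;=\; t \;+\; R \;-\; L,
\]
where $L := |\{k < i : a_i < a_k \le a_i + t\}|$ counts the inversions destroyed on the left of $i$ and $R := |\{j > i : a_j \in [a_i, a_i + t)\}|$ counts the inversions created on the right. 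Some care is needed with zero entries, since a zero to the right of a nonzero entry contributes to $\mathrm{inv}$ but a zero to the right of a zero does not.

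The second step is to exploit the rook condition to bound $L+R$. Since $y$ is a rook, the value $b_i = a_i + t$ cannot reappear among the remaining entries, and when $a_i > 0$ the value $a_i$ also cannot reappear. In the case $a_i > 0$ it follows that both $L$ and $R$ count entries equal to one of the $t-1$ integers $a_i+1, \dots, a_i+t-1$, each of which occurs at most once, giving $L + R \le t - 1$. Requiring $\ell(y) - \ell(x) = 1$ is equivalent to $L - R = t - 1$, and combined with the rook bound this forces $R = 0$ and $L = t - 1$, i.e., every integer strictly between $a_i$ and $b_i$ must be realized as an $a_{j_r}$ with $j_r < i$. Specializing to $t = 1$ this recovers case~(2), and for $t \ge 2$ it is precisely case~(3) with $s = t - 1$. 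Conversely, direct substitution into $t + R - L$ verifies that the configurations of (2) and (3) do produce length increment one.

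The last case is $a_i = 0$. Here zero columns to the right of $i$ contribute to $R$ even though they are unconstrained by the rook property: if $j > i$ and $a_j = 0$, then $(i,j)$ is not an inversion of $x$ but is one of $y$ because $b_i = t > 0$. The refined rook bound $L + |\{j > i : 0 < a_j < t\}| \le t - 1$ together with $L - R = t - 1$ now jointly force $L = t - 1$, the vanishing of $|\{j > i : a_j = 0\}|$, and the vanishing of $|\{j > i : 0 < a_j < t\}|$; in other words no value of $\{0, 1, \dots, t-1\}$ occurs to the right of $i$. For $t = 1$ this is precisely case~(1); for $t \ge 2$ it matches case~(3) with the implicit understanding that all entries to the right of $i$ are nonzero. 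The principal technical obstacle I expect is exactly this bookkeeping for zero columns, since zeros interact asymmetrically with $\mathrm{inv}$; once that asymmetry is absorbed into the definition of $R$, the remainder of the proof is a routine verification against the identity $\ell(y) - \ell(x) = t + R - L$.
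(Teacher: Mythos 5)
The paper itself offers no proof of this lemma: it is imported from \cite{CanRenner} (``the following two lemmas are found out to be very useful''), so there is no in-paper argument to compare yours against, and your write-up has to stand on its own. It does. The identity $\ell(y)-\ell(x)=t+R-L$ is correct, since by Proposition~\ref{P: sum inv} only pairs involving position $i$ can change the inversion count; the rook constraints give $L+R\le t-1$ when $a_i>0$ (and your refined bound when $a_i=0$); and $L-R=t-1$ then forces $R=0$ and $L=t-1$, which is exactly the trichotomy of the lemma. Your handling of the zero columns is the right one, and the caveat you flag at the end is a genuine defect of the statement as transcribed here, not of your argument: when $a_i=0$ and $t\ge 2$, condition (3) as literally written is \emph{not} sufficient, because a zero column to the right of $i$ creates an extra inversion in $y$. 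For instance $x=(1,2,0,0)$ and $y=(1,2,3,0)$ in $R_4$ satisfy (3) with $s=2$, yet $\ell(x)=3+4=7$ while $\ell(y)=6+3=9$, so the length jumps by $2$. Thus your ``implicit understanding that all entries to the right of $i$ are nonzero'' is doing real mathematical work; the correct form of case (3) must carry the positivity condition $a_j=b_j>0$ for $j>i$ whenever $a_i=0$, exactly as case (1) does. With that amendment made explicit, your computation is a complete and self-contained proof of the (corrected) lemma.
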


\begin{Example}
Let $x=(4,0,5,0,3,1)$, and let $y=(4,0,5,0,6,1)$. Then $\ell(x)= 21$, and $\ell(y)=22$. 
If $z=(4,0,5,0,3,2)$, then $\ell(z)=22$.
\end{Example}

\begin{Lemma} \label{L:PPRcovering1}
Let $x=(a_1,\dots,a_n)$ and $y = (b_1,\dots,b_n)$ be two elements of $R_n$. 
Suppose that $a_j=b_i,\ a_i = b_j$ and $b_j < b_i$ where $i < j$. 
Furthermore, suppose that for all $k\in \{1,\dots\widehat{i},\dots,\widehat{j},\dots,n\}$, $a_k=b_k$.
Then, $\ell(y) = \ell( x)+ 1$ if and only if for $s=i+1,\dots,j-1$, either $a_j< a_s$, or $a_s < a_i$.
\end{Lemma}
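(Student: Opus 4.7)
The plan is to reduce the statement to a combinatorial computation of the inversion difference, using Proposition \ref{P: sum inv} which asserts $\ell(x) = \sum a_k + inv(x)$. Since $y$ is obtained from $x$ by swapping the entries at positions $i$ and $j$, the sequences $(a_1,\dots,a_n)$ and $(b_1,\dots,b_n)$ have identical multisets of values. Therefore $\sum a_k = \sum b_k$ and so $\ell(y) - \ell(x) = inv(y) - inv(x)$. The lemma thus reduces to showing that this difference equals $1$ precisely when each intermediate value $a_s$ (for $i<s<j$) avoids the closed interval between $a_i$ and $a_j$.

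To compute $inv(y)-inv(x)$, I would partition the set of pairs $(p,q)$ with $p<q$ according to how they relate to $\{i,j\}$. Pairs with $p,q \notin \{i,j\}$ contribute nothing since both coordinates are unchanged. For each $p<i$, the combined contribution of the pairs $(p,i)$ and $(p,j)$ is symmetric in $\{a_i,a_j\}$, so it cancels between $x$ and $y$; likewise, for each $q>j$ the pairs $(i,q)$ and $(j,q)$ combine symmetrically and cancel. The pair $(i,j)$ itself contributes exactly $+1$, because $a_i<a_j$ in $x$ becomes $b_i>b_j$ in $y$.

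The only remaining contributions come from pairs of the form $(i,s)$ and $(s,j)$ for $i<s<j$, whose joint change I denote by $c(s)$. A short case analysis on the relative position of $a_s$ with respect to $a_i$ and $a_j$ shows the following. If $a_s<a_i$ (allowing $a_s=0$ when $a_i>0$) then swapping the endpoints simply reverses which of the two comparisons is an inversion, giving $c(s)=0$. If $a_s>a_j$ then both $(i,s)$ and $(s,j)$ are inversions in $x$ and both remain inversions in $y$, so $c(s)=0$. In the remaining cases, namely $a_i<a_s<a_j$ or $a_s=a_i=0$, the pair $(i,s)$ gains an inversion without any compensating loss from $(s,j)$, so $c(s)\geq 1$. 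This is the crux of the argument.

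Putting everything together yields $\ell(y)-\ell(x) = 1 + \sum_{s=i+1}^{j-1} c(s)$ with each $c(s)\geq 0$, so the covering condition $\ell(y)=\ell(x)+1$ is equivalent to $c(s)=0$ for all intermediate $s$, which is exactly the stated condition ``$a_j<a_s$ or $a_s<a_i$.'' The only subtle point, and the main thing one must be careful about, is the bookkeeping around the zero entries (empty columns) in the one-line notation of a rook: the convention for what counts as an inversion when one of the entries is $0$ must be tracked consistently, but once handled the cancellations above go through as stated.
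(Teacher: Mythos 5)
Your argument is correct, and it is worth noting that the paper itself offers no proof of this lemma: it is quoted from \cite{CanRenner} as a known description of the covering relations, so you have supplied an independent verification rather than a variant of the paper's argument. Your reduction via Proposition \ref{P: sum inv} to $\ell(y)-\ell(x)=inv(y)-inv(x)$, the cancellation of pairs with both indices outside $\{i,j\}$ and of pairs $(p,i),(p,j)$ with $p<i$ and $(i,q),(j,q)$ with $q>j$, the $+1$ from the pair $(i,j)$ itself, and the localization of everything else in the quantities $c(s)=[a_j>a_s]+[a_s>a_i]-[a_i>a_s]-[a_s>a_j]$ are all sound, and the conclusion $\ell(y)-\ell(x)=1+\sum_s c(s)$ with $c(s)\ge 0$ gives exactly the stated equivalence. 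Two of your case descriptions are off even though the conclusions are right: when $a_s>a_j$, the pair $(i,s)$ is \emph{not} an inversion in $x$ (since $a_i<a_j<a_s$) nor in $y$; only $(s,j)$ is, in both, so $c(s)=0$ as you claim but not for the reason you state. And when $a_i<a_s<a_j$, \emph{both} $(i,s)$ and $(s,j)$ become inversions after the swap, so $c(s)=2$ there (while $c(s)=1$ in the degenerate case $a_s=a_i=0$, which you correctly isolate); since you only need $c(s)\ge 1$, this does not affect the proof. Your closing remark about tracking the convention for zero entries is apt: with the paper's definition $inv(x)=|\{(p,q):p<q,\ a_p>a_q\}|$ (zeros treated as ordinary values, distinctness holding only among nonzero entries), the bookkeeping goes through exactly as you describe.
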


\begin{Example}
Let $x=(2,6,5,0,4,1,7)$, and let $y=(4,6,5,0,2,1,7)$. 
Then $\ell(x)= 35$, and $\ell(y)=36$. Let $z=(7,6,5,0,4,1,2)$. Then $\ell(z)=42$.
\end{Example}

\section{Proof of Theorem~\ref{T:Borelmonoid}}~\label{S:proofof2}

Recall our notation that $B_n^{nil}$ denotes the strictly upper triangular 
elements of the Borel-Renner monoid $B_n$. 
Clearly, $B_n^{nil}$ is isomorphic to $B_{n-1}$
not only as a monoid but also as a poset. 

Let $\sigma$ be an element from $B_n^{nil}$ and let 
$\sigma_1\dots \sigma_n$ be its one-line notation. 
We associate an arc-diagram $A = A(\sigma)$ to $\sigma$ as follows. 
If $i$ and $j$ are two 
positive integers such that $1\leq i < j \leq n$, 
then $j$ covers $i$ in a chain of $A$ if and only if 
$\sigma_j = i$. Obviously, in this case, $\{i, j\}$ 
is an arc of $A$.  
This association is a version of a well-known bijection
between set partitions $\Pi_n$ and the rook placements
on an upper triangular board of base length $n-1$. 
Let us denote by $\varphi$ the bijection that is defined in the 
previous paragraph. Our goal in this section is 
to prove that 
\begin{align}\label{A:varphi}
\varphi : (\mc{A}_n,\prec) \rightarrow (B_n^{nil},\leq )
\end{align}
is a poset isomorphism.

Let $x=(a_1,\dots,a_n)$ and $y = (b_1,\dots,b_n)$ be elements of $B_n^{nil}$
such that $y$ covers $x$. By Theorem~\ref{T:PPR} we know that either 
\begin{enumerate}
\item there exists an $1 \leq i \leq n$ such that $b_i> a_i$ and $b_j = a_j$ for all $j\neq i$, or
\item  there exist  $1 \leq i < j \leq n$ such that $b_i=a_j,\ b_j=a_i$ with $b_i > b_j$, 
and for all $k\notin \{i,j\}$, $b_k = a_k$.
\end{enumerate}
Let us proceed with the first case. Then by Lemma~\ref{L:PPRcovering0}
we know that exactly one of the following statements hold true: 
\begin{enumerate}
\item[1.a] $0=a_i$, $b_i=1$ and $a_j=b_j > 0$ for all $j > i$, or 
\item[1.b] $0< a_i$ and $b_i = a_i +1$, or
\item[1.c] there exists a sequence of indices $1 \leq j_1< \cdots < j_s < i$ 
such that the set $\{a_{j_1},\dots,a_{j_s}\}$ is equal to 
$\{a_i+1,\dots,a_i+s	\}$, and $b_i=a_i+s+1$.
\end{enumerate}

In the case of 1.a we see that $\varphi^{-1}(x)$ 
has 1 as an isolated vertex and $\varphi^{-1}(y)$ has $\{1,j\}$
as an arc. Notice that no arc whose starting vertex 
is 1 lies under another arc. Moreover, since 
$a_j=b_j >0$ for all
$j>i$ by our hypothesis, the vertex $i$ has the biggest
possible index that the arc starting at 1 can connect.
Therefore, according to Rule 3. we have a covering
relation $\varphi^{-1}(x)\prec \varphi^{-1}(y)$.

In the case of 1.b, $\{a_i,i\}$ is an arc in $\varphi^{-1}(x)$
and in $\varphi^{-1}(y)$ we have $\{a_i+1,i\}$ as an arc. 
Therefore, an arc of $\varphi^{-1}(x)$ is shortened by 1,
hence according to Rule 1. this is a covering relation.

The case of 1.c is similar to 1.a; it gives a covering 
relation by Rule 3.

Next, we look at the second type of covering 
relation as in Theorem~\ref{T:PPR}. In this case, 
we look at the numbers $a_i,a_j,i$ and $j$ closely. 
By definition $\{a_i,i\}$ and $\{a_j,j\}$ 
are arcs in $\varphi^{-1}(x)$. But 
both of the arcs $\{a_j,i\}$ and $\{a_i,j\}$ are 
contained in $y$, therefore, 
$a_i< a_j < i < j$. This means that 
$\{a_i,i\}$ and $\{a_j,j\}$ are crossing arcs in $\varphi^{-1}(x)$.
However, the arcs $\{a_i,j\}$ and $\{a_j,i\}$ are nested
in $\varphi^{-1}(y)$. 
By Rule 2., we see that $\varphi^{-1}(y)$ covers $\varphi^{-1}(x)$.

In summary, we showed that the map $\varphi^{-1}$ is
an order preserving bijection from $(B_n^{nil},\leq)$
to $(\mc{A}_n,\prec)$.

Next, we will show that $\varphi$ is an order preserving 
bijection from $(\mc{A}_n,\prec)$ to $(B_n^{nil},\leq)$. 
Let $A$ and $B$ be two arc-diagrams such that 
$A$ is covered by $B$ in $(\mc{A}_n,\prec)$. 
Let $x$ and $y$ denote, respectively, the images
of $A$ and $B$ in $B_n^{nil}$. (We continue to use 
the one-line notation for the elements of $R_n$.)
If the covering relation $A\prec B$ is obtained from Rule 3., then
$y$ is obtained from $x$ by inserting a nonzero entry to $x$. 
But according to item 1. in Theorem~\ref{T:PPR}, 
this is a covering relation in $R_n$.
If the covering relation $A\prec B$ is obtained from Rule 2., then
item 2. in Theorem~\ref{T:PPR} applies.  
Finally, if the covering relation $A\prec B$ is obtained from Rule 1., then
there are two possibilities. To describe, let $a$ denote denote the arc
$a=\{v_i,v_j\}$ in $A$ such that to obtain $B$ from $A$ we 
replace exactly one of the vertices $v_i$ or $v_j$ by another vertex $v_k$
($i<k<j$). 
In the first possible scenario, $v_i$ is replaced by $v_k$. 
This amounts to a covering relation as described in
items 2 or 3 of Lemma~\ref{L:PPRcovering0}.
In the second possible scenario, $v_j$ is replaced by $v_k$.
This amounts to the covering relation as in Lemma~\ref{L:PPRcovering1}. 
Therefore, $\varphi$ is order preserving as well, 
hence the proof of Theorem~\ref{T:Borelmonoid} is finished.

\begin{proof}[Proof of the first claim of Theorem~\ref{T:main1}]
The poset $(B_n^{nil},\leq)$ is the interval $[(0,\dots, 0), (0,1,\dots, n-1)]$ 
in $(B_n,\leq)$, which, in turn, is the interval $[(0,\dots, 0),(1,\dots, n)]$ in 
$(R_n,\leq)$. Therefore, by Theorem~\ref{T:Borelmonoid}, 
the poset $(\mc{A}_n,\prec)$ is bounded, graded, and EL-shellable. 
\end{proof}

\section{Statistics on arc-diagrams}\label{S:Stats}

In this section, to prove the second part
of Theorem~\ref{T:main1}, we will show
in Proposition~\ref{P:second part} that the function $\mathtt{t}$ 
defined in the introduction section agrees with the
length function on $B_n$. 
Then we will give another combinatorial reformulation of $\mathtt{t}$.

\begin{Proposition} \label{P:second part}
Let $x$ be a partial permutation of the form $x=(a_1,\dots, a_n)\in R_n$ 
and let $A_x$ be the arc-diagram on $n$ vertices which corresponds to $x$. 
Then
\begin{align}\label{A:our claim}
\ell( x) =\mathtt{t}(A_x)\, .
\end{align}
\end{Proposition}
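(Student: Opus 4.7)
The plan is to verify $\ell(x) = \mathtt{t}(A_x)$ by a direct algebraic comparison, converting Proposition~\ref{P: sum inv} (which gives $\ell(x) = \sum_i a_i + \mt{inv}(x)$) into statements about arcs of $A_x$ and matching those statements term-by-term with the three summands defining $\mathtt{t}$. Throughout, I will write each arc $\alpha$ of $A_x$ as $\alpha = \{l_\alpha,r_\alpha\}$ with $l_\alpha = a_{r_\alpha}>0$, and set $N := \{r_\alpha : \alpha \text{ is an arc of } A_x\}$, the set of right-endpoints of arcs. With this notation $|N|=k$ and $\sum_i a_i = \sum_\alpha l_\alpha$ tautologically.

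Step~1 rewrites $\mathtt{t}(A_x)$ in terms of the $l_\alpha$ and $r_\alpha$. Swapping the order of summation in the vertex-depth sum and using that a vertex $v$ is under $\alpha$ iff $l_\alpha < v < r_\alpha$ gives
\[
\sum_{j=1}^n \mt{depth}(v_j) \;=\; \sum_\alpha (r_\alpha - l_\alpha - 1) \;=\; \sum_\alpha r_\alpha - \sum_\alpha l_\alpha - k.
\]
Combining this with the identity $\sum_{i=1}^k(n-i) = kn - \binom{k+1}{2}$ and simplifying collapses $\mathtt{t}(A_x)$ to
\[
\mathtt{t}(A_x) \;=\; \sum_\alpha l_\alpha \;+\; \Big( \sum_{r\in N}(n-r) - \binom{k}{2}\Big) \;+\; \sum_\alpha \mt{depth}(\alpha).
\]

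Step~2 rewrites $\mt{inv}(x)$ in the same terms by splitting the inversions according to whether $a_j=0$ or not. The inversions $(i,j)$ with both $a_i$ and $a_j$ nonzero correspond exactly to pairs of arcs $(\alpha,\beta)$ with $\beta$ above $\alpha$, so their total is $\sum_\alpha \mt{depth}(\alpha)$. The inversions $(i,j)$ with $a_i>0=a_j$ are indexed by pairs $(r,j)$ with $r\in N$, $j\notin N$, $j>r$; I count these by starting from the $n-r$ indices beyond $r$, discarding those lying in $N$, summing over $r\in N$, and using the identity $\sum_{r\in N}|\{j\in N : j>r\}| = \binom{k}{2}$. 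This yields
\[
\mt{inv}(x) \;=\; \Big( \sum_{r\in N}(n-r) - \binom{k}{2}\Big) + \sum_\alpha \mt{depth}(\alpha),
\]
and adding $\sum_i a_i = \sum_\alpha l_\alpha$ recovers exactly the expression of Step~1, completing the proof.

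The main obstacle is Step~2, specifically the careful partition of $\mt{inv}(x)$ together with the ``binomial collapse'' $\sum_{r\in N}|\{j\in N : j>r\}| = \binom{k}{2}$ that converts a double indicator sum into the precise binomial coefficient appearing in Step~1. An alternative route via Theorem~\ref{T:Borelmonoid} would be to check inductively that $\mathtt{t}$ is a rank function of $(\mc{A}_n,\prec)$ by verifying a $+1$ increment on each of Rules~1--3; that approach, however, requires a delicate case analysis, especially for Rule~1, where the ``nearest available'' replacement can skip several vertices occupied by existing arcs, so the direct reformulation above is preferable.
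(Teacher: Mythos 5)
Your argument is correct, and it takes a genuinely different route from the paper. The paper proves Proposition~\ref{P:second part} by induction on $n$: it peels off the arc $\{1,s\}$ (or the isolated first vertex), tracks how $\ell$ changes via $\sum a_i$ and $inv$, tracks how $\mathtt{t}$ changes via the three depth sums, and reduces the desired identity to the counting fact $p+q+s=t-2$ for the truncated sub-diagram. You instead evaluate both sides in closed form: the key observations are that $\sum_j depth(v_j)=\sum_\alpha(r_\alpha-l_\alpha-1)$, that inversions with both entries nonzero biject with nested (one-above-the-other) pairs of arcs and hence sum to $\sum_\alpha depth(\alpha)$, and that inversions of type $a_i>0=a_j$ collapse to $\sum_{r\in N}(n-r)-\binom{k}{2}$; I checked these identities (including on the paper's worked example, where both sides give $21$) and the term-by-term match is exact. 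Your computation is arguably cleaner and yields an explicit reformulation $\mathtt{t}(A_x)=\sum_\alpha l_\alpha+\sum_{r\in N}(n-r)-\binom{k}{2}+\sum_\alpha depth(\alpha)$ as a byproduct, which the paper's induction does not make visible; the paper's inductive scheme, on the other hand, is reused essentially verbatim to prove Proposition~\ref{crossdepth} relating $depth$ and $cross$, so it buys uniformity across the two statistics. One small point worth making explicit in your write-up: the correspondence $x\mapsto A_x$ presupposes $x\in B_n^{nil}$, i.e.\ $a_{r_\alpha}=l_\alpha<r_\alpha$ for every arc, and this strict inequality is what guarantees that an inversion pair with both entries nonzero really produces a nested pair ($l_\alpha<l_\beta<r_\beta<r_\alpha$) rather than some degenerate configuration.
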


\begin{proof} 
We will use induction on the number of vertices, $n$. 
The base case of the induction is obvious. 
We assume that our claim (\ref{A:our claim}) is true for all arc-diagrams 
with at most $n$ vertices. 
We proceed to prove our claim for $n+1$. 
Let $x=(a_1,\dots,a_{n+1})\in R_{n+1}$ be a partial permutation 
with the corresponding arc-diagram $A_x$ on $n+1$ vertices and 
with $k$ arcs.

Let $s$ be a number such that $2\leq s \leq n+1$. 
There are two cases to consider.
First, if there exists an arc $\{1,s\}$ in $A_x$, then 
let $\tilde A$ denote the arc-diagram that is obtained from $A_x$ 
by removing $\{1,s\}$, and let $\tilde x$ be the partial permutation which 
corresponds to $\tilde A$. Then $k$ is the number of arcs in $\tilde A$. 
Clearly, although $\tilde A$ has $n+1$ vertices, since its first vertex 
does not have any arcs emanating from it, $\tilde x$ has a 0 in its first entry.
Removing this entry from $\tilde x$ and removing the first vertex from $\tilde A$
does not alter the difference between the length $\ell$ and the statistics $\mathtt{t}$. 
Indeed, if $\tilde x'$ and $\tilde A'$ denotes the resulting partial permutation
and the corresponding arc-diagram, then we see that  
$$
\ell ( \tilde x ) = \ell (\tilde x') +k \ \text{ and }\ \mathtt{t}(A_{\tilde x}) = \mathtt{t}(\tilde A') + k.
$$
Now by the induction
hypothesis, we have $\ell(\tilde x) =\mathtt{t}(\tilde A)$. 
Secondly, if there is no arc of the form $\{1,s\}$ in $A_x$,
then we repeat the previous argument.

Let us denote the $i$-th coordinates of $x$ and $\tilde x$ by 
$a_i(x)$ and $a_i(\tilde x)$, respectively, for $i=1,\dots, n+1$. 
Notice that the $s$-th entry of $\tilde x$ is 0, and the $s$-th entry of $x$ 
is 1. All other entries of $\tilde x$ and $x$ coincide. So, we have 
$$
\sum a_i (x)=\sum a_i(\tilde x)+1. 
$$
Notice also that $a_j(x)=0$ if and only if 
$j$ is the starting point of a chain in $A_x$, 
therefore, 
\begin{align}\label{A:r is as before}
inv(x)=inv(\tilde x)+1+r,
\end{align}
where $r$ is the number of chains in $A_x$ that 
start at the $j$-th vertex with $j>s$. 
Thus, $\ell(x)=\ell(\tilde x)+1+r$.

Next, we compare $\mathtt{t}(A_x)$ and $\mathtt{t}(\tilde A)$. 
We have
\begin{align}\label{A:meanings}
\mathtt{t}(A)=\mathtt{t}(\tilde A)+n-k-1-(s-2)+q\,,
\end{align}
where $q$ is the number of arcs under the arc $\{1,s\}$. 
Let us explain the meanings of the summands on the right side of (\ref{A:meanings}).
The summand $n-k-1$ appears since $A$ has one more 
arc than that $\tilde A$ has;  
the contribution of the arcs in $\tilde A$ to $\mathtt{t}(\tilde A)$ is 
$\sum_{i=1}^k (n-i)$ in $\mathtt{t}(\tilde A)$ whereas the contribution 
of arcs of $A_x$ to $\mathtt{t}(A_x)$ is $\sum_{i=1}^{k+1} (n-i)$ in $\mathtt{t}(A)$.
The summand $-(s-2)$ appears since the depths of each of the $s-2$ vertices 
$v_2$, $v_3$, ... , $v_{s-1}$ of $\tilde A$ increase by 1 when we include the arc $\{1,s\}$. 
Finally, the summand $q$ appears since the depths of each of the $q$ arcs on 
the vertices $v_2,v_2,\dots, v_{s-1}$ of $\tilde A$ increase by 1 when we add include the $\{1,s\}$.

Thus, in order to prove the equality $\ell( x) =\mathtt{t}(A)$ 
it suffices to show that $r=n-k-s+q$, where $r$ is as in (\ref{A:r is as before}).
This equality holds in view of the following argument; 
$n-k-r$ is the number of chains in $\tilde A$ starting at a verticex $v_l$ with $l\leq s$. 
If we add to this number the number of arcs on the vertices $v_2,v_3,\dots, v_{s-1}$, 
we get exactly $s$. 
Consider the truncated sub-diagram of $\tilde A$ on the first $s$ vertices.
(The arcs $\{ i,j\}$ with $i<s<j$ are deleted from $\tilde A$.) 
It is easy to see that, in any arc-diagram on $s$ vertices, the number of arcs 
plus the number of chains equals to $s$. 
Therefore the number of arcs in the truncated diagram is $q$, 
and the number of chains therein is $n-k-r$. 
So, $s=n-k-r+q$, or, $r=n-k-s+q$ is true.
This finishes the proof of the equality $\ell( x) =\mathtt{t}(A)$.
\end{proof}

Following the conventions that are set 
before Definition~\ref{depth} on the crossings 
of arcs, we define the ``crossing number'' of an arc
as follows. 

\begin{Definition}\label{cross}
Let $\alpha$ be an arc in an arc-diagram $A$. 
We denote by $cross(\alpha)$ the total number of chains that
$\alpha$ crosses. 
Note that $\alpha$ crosses a chain at most twice. In this case,  
we consider it as a single crossing.
\end{Definition}

\begin{Example}\label{E:1}

Let $A$ be the arc-diagram in Figure~\ref{F:crossings}. 
The crossing numbers of $A$ are as follows:
$cross(\{1,8\})=cross(\{2,5\})=cross(\{3,7\})=1$, $cross(\{5,6\})=0$, 
and $cross(\{6,9\})=2$. 
\begin{figure}[h]
\begin{center}
\begin{tikzpicture}[scale=.45]
\node at (-8,0) {$\bullet$};
\node at (-6,0) {$\bullet$};
\node at (-4,0) {$\bullet$};
\node at (-2,0) {$\bullet$};
\node at (0,0) {$\bullet$};
\node at (2,0) {$\bullet$};
\node at (4,0) {$\bullet$};
\node at (6,0) {$\bullet$};
\node at (8,0) {$\bullet$};

\node at (-8,-0.5) {$1$};
\node at (-6,-0.5) {$2$};
\node at (-4,-0.5) {$3$};
\node at (-2,-0.5) {$4$};
\node at (0,-0.5) {$5$};
\node at (2,-0.5) {$6$};
\node at (4,-0.5) {$7$};
\node at (6,-0.5) {$8$};
\node at (8,-0.5) {$9$};

\draw[thick,blue,-] (-8,0) ..controls  (-0.5,4) .. (6,0);
\draw[thick,blue,-] (-6,0) ..controls  (-3,1) .. (0,0);
\draw[thick,blue,-] (-4,0) ..controls  (0,2) .. (4,0);
\draw[thick,blue,-] (0,0) ..controls  (1,1) .. (2,0);
\draw[thick,blue,-] (2,0) ..controls  (5,1) .. (8,0);
\end{tikzpicture}
\end{center}
\caption{Crossings.}
\label{F:crossings}
\end{figure}
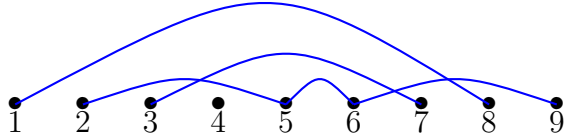
\end{Example}

The following proposition shows the relation between {\it cross} and {\it depth}.
\begin{Proposition}\label{crossdepth} 
Let $A$ be an arc-diagram on $n$ vertices, denoted by $v_1,\dots, v_n$.
Let $\alpha_1,\dots, \alpha_k$ denote its arcs, and let $\beta_1,\dots, \beta_{n-k}$
denote its chains. In this notation, the following equality holds true:
\begin{align}\label{A:crossdepth}
\sum_{m=1}^k cross(\alpha_m)=\sum_{i=1}^{n} 
depth(v_i)-\sum_{m=1}^{k} depth(\alpha_m)-\sum_{j=1}^{n-k} depth(\beta_j)\,.
\end{align}
\end{Proposition}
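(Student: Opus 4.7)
The plan is to reduce the global identity to a per-arc local identity. For every arc $\beta=\{u,w\}$ with $u<w$, let $V(\beta)$, $A(\beta)$, $Ch(\beta)$ denote respectively the sets of vertices $v$ with $u<v<w$, of arcs $\{r,s\}$ with $u<r$ and $s<w$, and of chains whose leftmost and rightmost labels both lie in $(u,w)$. Dualizing the definition of depth gives
$$\sum_{i=1}^n depth(v_i)=\sum_{\beta} |V(\beta)|,\qquad \sum_{m=1}^k depth(\alpha_m)=\sum_{\beta} |A(\beta)|,\qquad \sum_{j=1}^{n-k} depth(\beta_j)=\sum_{\beta} |Ch(\beta)|,$$
so it suffices to prove, for every arc $\beta$, the local identity
$$cross(\beta) \;=\; |V(\beta)|-|A(\beta)|-|Ch(\beta)|.$$

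To establish this local identity, fix $\beta$ and group the chains of $A$ according to how their vertices meet the open interval $(u,w)$. First, the chain $C_\beta$ containing $\beta$ contributes nothing on either side, because $\beta$ is a covering relation of $C_\beta$, so the labels $u$ and $w$ are consecutive in the sorted order of $C_\beta$; hence $C_\beta$ has no vertex in $(u,w)$, no arc in $A(\beta)$, and is not in $Ch(\beta)$. For a chain $C\neq C_\beta$ the key combinatorial input is that the vertices of $C$ falling in $(u,w)$ form a consecutive run $v_{j_1}<\cdots<v_{j_2}$ in the sorted listing of $C$; in particular, if there are $k$ such vertices then exactly $k-1$ arcs of $C$ have both endpoints in $(u,w)$ and so lie in $A(\beta)$. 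One then distinguishes three cases. In case (a), all vertices of $C$ lie in $(u,w)$: the right side gets $|C|-(|C|-1)-1=0$, and $C$ does not cross $\beta$, so the left side is also $0$. In case (b), $C$ has at least one vertex in $(u,w)$ and at least one outside $[u,w]$: the right side gets $k-(k-1)-0=1$, while $C$ crosses $\beta$ and contributes $1$ on the left. In case (c), no vertex of $C$ lies in $(u,w)$: both sides contribute $0$. Summing these contributions over all chains gives the local identity, and then summing over $\beta$ gives the proposition.

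The step that I expect to require the most care is verifying that case (b) matches the definition of $cross$ exactly. Case (b) subsumes both chains that straddle both sides of $[u,w]$ (crossing $\beta$ ``twice'' but counted once by the convention in Definition~\ref{cross}) and chains that straddle only one side (crossing $\beta$ once); both situations are handled uniformly by the bookkeeping $k-(k-1)=1$, which is precisely why the ``one crossing per chain'' convention of $cross$ is the correct pairing with the vertex-minus-arc difference on the right-hand side.
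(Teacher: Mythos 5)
Your proof is correct, and it takes a genuinely different route from the paper's. The paper proves Proposition~\ref{crossdepth} by induction on $n$: it strips off the arc $\{1,t\}$ incident to the first vertex (when present), splits into two cases according to whether an arc $\{t,r\}$ with $r>t$ emanates from $t$, and tracks how each of the four sums changes, reducing everything to the identity $p+q+s=t-2$. Your argument instead dualizes each depth sum into a sum over arcs $\beta=\{u,w\}$ of the quantities $|V(\beta)|$, $|A(\beta)|$, $|Ch(\beta)|$, and then establishes the local identity $cross(\beta)=|V(\beta)|-|A(\beta)|-|Ch(\beta)|$ chain by chain. The key points are all verified correctly: the vertices of a chain $C\neq C_\beta$ lying in $(u,w)$ form a consecutive run contributing $k_C$ vertices and $k_C-1$ arcs of $A(\beta)$; $C$ belongs to $Ch(\beta)$ exactly in case (a); and $\beta$ crosses $C$ exactly in case (b), since a crossing arc of $C$ must have one endpoint in $(u,w)$ and one outside, and conversely the arcs entering and leaving the run provide such crossings. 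Your closing observation that the ``a chain crossed twice counts once'' convention of Definition~\ref{cross} is exactly what makes the telescoping $k_C-(k_C-1)=1$ come out right is the real content of the proposition, and your proof makes it visible in a way the paper's induction does not. What the paper's approach buys in exchange is that its induction (peeling the arc at vertex $1$) runs in parallel with the induction used for Proposition~\ref{P:second part}, so the two proofs share their setup and bookkeeping; your double-counting argument is non-inductive, more local, and arguably more transparent.
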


\begin{proof}
Once again, we use induction on $n$.  
The base case is obvious, so we assume that our claim (\ref{A:crossdepth})
holds true for arc-diagrams on $m \leq n-1$ vertices and we will prove it 
for the arc-diagrams on $n$ vertices. 

Now, let $A$ be an arc-diagram whose vertices, arcs, and chains are 
as in the hypothesis of the proposition. 
If there is no arc that emanates from the first vertex, then 
removal of the vertex does not alter neither the left hand side 
nor the right hans side of eqn. (\ref{A:crossdepth}). So, 
in this case, by the induction hypothesis, we see that (\ref{A:crossdepth})
holds true. Next, we will analyze
how both sides of eqn. (\ref{A:crossdepth}) changes if we add an arc $\{1,t\}$ 
to $A$. There are two cases.
We abbreviate ``left hand side of (\ref{A:crossdepth})'' to ``l.h.s.'' 
and similarly we abbreviate ``right hand side of (\ref{A:crossdepth})'' to ``r.h.s.''.

{\bf Case 1.} We assume that there is no arc of $A$ which is of the form $\{t,r\}$ with $r>t$. 
In this case, let us denote by $p$ the number of arcs which cross $\{1,t\}$.
In other words, the number of arcs $\{a,b\}$ such that $a<t$ and $b>t$ is $p$. 
If we add $\{1,t\}$ back to $A$, then $\sum cross(\alpha_k)$ increases by $2p$. 
Let us look at the r.h.s. The sum of depths of vertices, $\sum depth(v_i)$, 
increases by $t-2$ since $v_2,\dots,v_{t-1}$ are now below the arc $\{1,t\}$. 
The sum of the depths of arcs,  $\sum  depth(\alpha_m)$ increases by $q$, 
where $q$ is the number of arcs under $\{1,t\}$.
(In other words, $q$ is the number of arcs $\{i,j\}$ such that $2\leq i < j \leq t-1$.)
By adding $\{1,t\}$, we see that the sum of depths of chains, $\sum  depth(\beta_j)$ increases by $s$, 
where $s$ is the number of chains under the arc $\{1,t\}$; but also it decreases by $p$, 
since in $A$ the vertex $v_t$ was a chain by itself and there were $p$ arcs above it.
In conclusion, the l.h.s. increases by $2p$, while the r.h.s. increases by $t-2-q-s+p$. 
Notice also the equality $p+q+s=t-2$ which follows from the fact that 
in any arc-diagram on $n$ vertices the number of arcs plus the number of chains equals to $n$. 
Now, since $p+q+s=t-2$ is true, the r.h.s. and the l.h.s. are still equal after the arc $\{1,t\}$ is added to $A$. 
This finishes the proof of the first case.

{\bf Case 2.} Assume that $A$ has an arc of the form $\{t,r\}$ with $r> t$. 
Then, by adding $\{1,t\}$ to $A$, the l.h.s. increases by $2p-u$, 
where $p$ is the total number of arcs of $A$ that are of the form $\{a,b\}$ with $a<t$ and $b>t$, 
and $u$ is the number of arcs $\{a,b\}$ with $a<t$ and $b>t$ 
that cross the chain $\{t,r,\dots\}$ in $A$. 
Let us look at the r.h.s..
As in Case 1., the sum of depths of vertices, $\sum depth(i)$, increases by $t-2$; 
the sum of the depths of arcs, $\sum  depth(\alpha_m)$ increases by $q$, 
where $q$ is the number of arcs under the arc $\{1,t\}$. Finally, 
the sum of the depths of chains, $\sum  depth(\beta_j)$, changes as follows: 
it increases by $s$, where $s$ is the number of chains under the arc $\{1,t\}$, 
and it decreases by $p-u$, where $p$ and $u$ are as before.
In summary, the l.h.s increases by $2p-u$, while the r.h.s. increases by $t-2-q-s+p-u$.
Therefore, the l.h.s. and the r.h.s are equal in view of the equality $p+q+s=t-2$
which is seen as in Case 1.. This finishes the proof of our claim. 

\end{proof}

\begin{Definition}\label{sdc}
Let $A$ be an arc-diagram on $n$ vertices with $k$ arcs denoted 
$\alpha_1$, $\alpha_2$,...,$\alpha_k$ and $n-k$ chains denoted 
$\beta_1$, $\beta_2$,...,$\beta_{n-k}$.  We define the crossing-index 
of $A$ by the formula 
$$
\mathtt{c}(A)=\sum_{i=1}^k (n-i)-\sum_{j=1}^{n-k} depth(\beta_j)-\sum_{m=1}^k cross(\alpha_m)\,.
$$
\end{Definition}

\begin{Example}

We continue with Example~\ref{E:1}.
The arc-diagram $A$ consists of four chains, 
$\{1,8\}$, $\{2,5,6,9\}$, $\{3,7\}$, $\{4\}$; 
it has five arcs, $\{1,8\}$, $\{2,5\}$, $\{3,7\}$, $\{5,6\}$, $\{6,9\}$.
The depths of the vertices are $depth(1)=depth(9)=0$, $depth(2)=depth(8)=1$, 
$depth(3)=depth(5)=depth(6)=depth(7)=2$, and $depth(4)=3$. 
The depths of arcs are given by 
$depth(\{ 1,8\} )=depth(\{ 6,9\})=0$, $depth( \{2,5\} )=depth(\{ 3,7\})=1$, $depth(\{5,6\})=2$. 
Therefore, the depth-index of $A$ is given by 
$$
\mathtt{t}(A)=8+7+6+5+4-(0+1+2+3+2+2+2+1+0)+(0+1+1+2+0)=21\,.
$$
Next, we will compute the crossing-index $\mathtt{c}(A)$. 
The depths of chains are given by $depth(\{1,8\}=depth(\{2,5,6,9\})=0$,
$depth(\{3,7\})=1$, $depth(\{4\})=3$. 
The crossing numbers of $A$ are as follows: 
$cross(1,8)=cross(2,5)=cross(3,7)=1$, $cross(5,6)=0$, $cross(6,9)=2$. In summary we have 
$$
\mathtt{c}(A)=8+7+6+5+4-(0+0+1+3)-(1+1+1+0+2)=21\,.
$$
\end{Example}

The equality of the depth-index and the crossing-index holds true for all arc-diagrams. 
The proof of this fact follows from Proposition~\ref{crossdepth} and the definitions, 
so we omit it. 
\begin{Proposition} 
Let $A$ be an arc-diagram. Then
$$
\mathtt{t}(A)=\mathtt{c}(A)\,.
$$
\end{Proposition}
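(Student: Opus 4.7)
The plan is to reduce the claimed equality $\mathtt{t}(A)=\mathtt{c}(A)$ directly to Proposition~\ref{crossdepth}, which has already been established. Writing out Definition~\ref{D:statdd} and Definition~\ref{sdc} side by side, both $\mathtt{t}(A)$ and $\mathtt{c}(A)$ contain the common summand $\sum_{i=1}^{k}(n-i)$. Cancelling it, the desired identity $\mathtt{t}(A)=\mathtt{c}(A)$ is equivalent to
\begin{align*}
-\sum_{j=1}^{n}depth(v_{j})+\sum_{m=1}^{k}depth(\alpha_{m})
=-\sum_{j=1}^{n-k}depth(\beta_{j})-\sum_{m=1}^{k}cross(\alpha_{m}),
\end{align*}
which is just a rearrangement of equation (\ref{A:crossdepth}). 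So the first step I would carry out is this algebraic manipulation; the second step is invoking Proposition~\ref{crossdepth} to conclude.

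In more detail, I would write: starting from $\mathtt{t}(A)-\mathtt{c}(A)$, the common term $\sum_{i=1}^{k}(n-i)$ drops out, and what remains is
\begin{align*}
\mathtt{t}(A)-\mathtt{c}(A)=\sum_{m=1}^{k}depth(\alpha_{m})+\sum_{j=1}^{n-k}depth(\beta_{j})+\sum_{m=1}^{k}cross(\alpha_{m})-\sum_{j=1}^{n}depth(v_{j}).
\end{align*}
Proposition~\ref{crossdepth} asserts exactly that the right-hand side is zero, so $\mathtt{t}(A)=\mathtt{c}(A)$, and the proof is complete.

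There is no real obstacle here: all the combinatorial content has already been absorbed into the inductive argument of Proposition~\ref{crossdepth}, where the two cases (adding an arc $\{1,t\}$ with or without a pre-existing arc $\{t,r\}$) were handled by carefully tracking how each of $\sum depth(v_i)$, $\sum depth(\alpha_m)$, $\sum depth(\beta_j)$, and $\sum cross(\alpha_m)$ change, using the structural identity $p+q+s=t-2$. Thus the present statement is essentially a bookkeeping corollary of that proposition, and this is precisely why the authors can afford to omit the proof.
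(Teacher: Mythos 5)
Your proposal is correct and matches the paper's intent exactly: the authors omit the proof precisely because, as you observe, cancelling the common term $\sum_{i=1}^{k}(n-i)$ reduces the identity $\mathtt{t}(A)=\mathtt{c}(A)$ to a rearrangement of equation (\ref{A:crossdepth}) in Proposition~\ref{crossdepth}. Your algebra checks out, so there is nothing to add.
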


\section{Stirling posets}\label{S:Subposets}

Recall that $E_{n,k}$ denotes the set of diagaonal 
matrices $A$ such that $A$ is a diagonal matrix 
of rank $k$ and it has only 0's and 1's in its entries. 
Recall also that we defined the variety $\mt{B}_{n,k}$ 
as the union $\bigcup_{e\in E_{n,k}} \overline{\mt{B}_n e \mt{B}_n}$. 
In this section we will further explain and prove the 6 properties 
about $\mt{B}_{n,k}$'s that we listed in Introduction.

We start with proving the following lemma. 
\begin{Lemma}
that the number of 
components of $\mt{B}_{n,k}$ is ${n \choose k}$ and they are all equal dimensional.
\end{Lemma}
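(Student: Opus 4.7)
The plan is to exhibit $\binom{n}{k}$ explicit irreducible closed subvarieties of $\mt{B}_{n,k}$ of the same dimension, no one of which is contained in another, and then to argue that they exhaust the irreducible components. The natural candidates are the orbit closures $\overline{\mt{B}_n e \mt{B}_n}$ as $e$ ranges over $E_{n,k}$. Each such closure is irreducible, being the Zariski closure of the image of the irreducible variety $\mt{B}_n \times \mt{B}_n$ under the morphism $(b_1,b_2)\mapsto b_1 e b_2$, and since $E_{n,k}$ is in obvious bijection with the $k$-element subsets of $\{1,\dots,n\}$, there are exactly $\binom{n}{k}$ of them.

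To verify that these orbit closures are pairwise incomparable, I would appeal to Remark~\ref{R:Renner=idempotent order}: on idempotents the Bruhat--Chevalley--Renner order coincides with the idempotent order. Two distinct elements of $E_{n,k}$ correspond to two distinct $k$-subsets of $\{1,\dots,n\}$, neither of which can contain the other since they have the same cardinality. Hence the idempotents are incomparable, and so are their orbit closures.

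The equidimensionality step is where I expect the real calculation to lie. Since $\dim\overline{\mt{B}_n e \mt{B}_n}=\ell(e)$, it suffices to check that the Renner length depends only on the rank $k$ of $e$, not on its particular support. Writing an idempotent $e_S\in E_{n,k}$ with $S=\{s_1<\cdots<s_k\}$ in the one-line notation of Section~\ref{S:Preliminaries} as the sequence with $a_{s_m}=s_m$ and zeros elsewhere, Proposition~\ref{P: sum inv} gives $\ell(e_S)=\sum_m s_m+inv(e_S)$. A bookkeeping of inversions shows that they come exclusively from pairs $(i,j)$ with $i\in S$, $j\notin S$, $i<j$; summing the contribution $(n-s_m)-(k-m)$ over $s_m\in S$ yields $inv(e_S)=k(n-k)+\binom{k+1}{2}-\sum_m s_m$, so the $S$-dependent terms cancel and $\ell(e_S)=k(n-k)+\binom{k+1}{2}$, manifestly independent of $S$.

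Once irreducibility, equidimensionality, and pairwise incomparability of the family $\{\overline{\mt{B}_n e \mt{B}_n}:e\in E_{n,k}\}$ are in hand, the lemma follows formally. Any irreducible closed subvariety $W\subseteq \mt{B}_{n,k}$ is contained in some $\overline{\mt{B}_n e \mt{B}_n}$ by irreducibility applied to the decomposition $W=\bigcup_{e\in E_{n,k}} (W\cap \overline{\mt{B}_n e \mt{B}_n})$, and the pairwise incomparability then forces the members of the family to be precisely the irreducible components of $\mt{B}_{n,k}$.
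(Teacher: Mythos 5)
Your proposal is correct and follows essentially the same route as the paper: the irreducible components are identified with the orbit closures $\overline{\mt{B}_n e \mt{B}_n}$ for $e\in E_{n,k}$, their pairwise incomparability is deduced from Remark~\ref{R:Renner=idempotent order}, and equidimensionality is reduced to showing $\ell(e)$ depends only on the rank. The only (minor) divergence is in that last computation: you evaluate $inv(e_S)$ directly in closed form to get $\ell(e_S)=k(n-k)+\binom{k+1}{2}$, whereas the paper obtains the same value, written as $\frac{k(2n-k+1)}{2}$, by induction on $k$.
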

\begin{proof}

We will show that the elements of $E_{n,k}$ are incomparable 
in BCR ordering and furthermore 
\begin{align}\label{A:dimension of e}
e\in E_{n,k} \implies \ell(e) = \frac{k (2n-k+1)}{2}.
\end{align}
To this end, let $e$ and $f$ be two diagonal idempotents from $B_n$. 
By Remark~\ref{R:Renner=idempotent order} we know that 
$$
\mt{B}_{n} e \mt{B}_{n} \subset \overline{\mt{B}_{n} f \mt{B}_{n}} \iff ef= e.
$$
But for two diagonal matrices $e$ and $f$ with 0/1 entries and 
which are of the same rank, the equality $e=fe$ holds true if and only if $e=f$.  
Since each Borel orbit closure is an irreducible variety, 
it follows that each closed subset $\overline{\mt{B}_{n} f \mt{B}_{n}}$ ($f\in E_{n,k}$)
of $\mt{B}_{n,k}$ is irreducible, and these are precisely the irreducible components
of $\mt{B}_{n,k}$.
In particular, there are ${n \choose k}$ of them.

Next, we prove the length formula (\ref{A:dimension of e}).
We will accomplish this by inducting on $k$. We start with the 
base case $k=1$. 
Let 
$$
e= (a_1,a_2,\dots, a_n) \in E_{n,k}
$$
be an idempotent that is given in one-line notation (\ref{E:oneline}). 
Since $k=1$, there exists a unique index $i$ ($1\leq i \leq n$) such that 
$a_i=1$ and $a_j=0$ if $j\neq i$. Then by Proposition~\ref{P: sum inv} we know that
$\ell(e) = \sum a_i + inv(e) = i + (n-i) = n$ which agrees with (\ref{A:dimension of e}).
Now assume that our claim holds true for all idempotents of rank $k-1$, 
we proceed to show that it is true for $e= (a_1,a_2,\dots, a_n) \in E_{n,k}$. 
Let $m$ denote the largest index such that $a_m =m$. 
In this case, replacing this 1 with 0 gives us an element 
$$
e' = (a_1,\dots, a_{m-1},0,a_{m+1},\dots, a_n) \in E_{n,k-1}
$$ 
hence $\ell(e') = \frac{ (k-1)(2n-k+2)}{2}$ by our induction assumption. 
By Proposition~\ref{P: sum inv}, we see that 
\begin{align*}
\ell( e') &= \sum_{j\neq m} a_j + inv( e') \\
&= \left( \sum_{j} a_j +m\right) + \left( inv( e) - (k-1) + (n-m) \right) \\
&= \left( \sum_{j} a_j + inv( e)\right)  + ( m- (k-1) + (n-m)) \\
&= \ell( e) + (n-k+1)
\end{align*}
from which our claim follows.  Hence, the proof is finished. 
\end{proof}

\begin{Remark}\label{R:simple but useful}
A simple but useful fact regarding Bruhat-Renner-Chevalley order
on $R_n$ is that if $\sigma \leq \tau$ for two elements $\sigma, \tau \in R_n$,
then their matrix ranks satisfy $rank(\sigma )\leq rank(\tau)$. 
\end{Remark}

\begin{Lemma}\label{L:RennerStirlingdecomposition1}
For every $k$ in the range $0,\dots, n$, 
$\mt{B}_{n,k}$ has a Renner decomposition of the form 
\begin{align*}
\mt{B}_{n,k}=  \bigsqcup_{\sigma \in B_{n,k}} \mt{B}_n \sigma \mt{B}_n,
\end{align*}
where $B_{n,k}$ is a finite subsemigroup of $B_n$ and it consists of 
rooks from $B_n$ whose matrix rank is at most $k$. 
\end{Lemma}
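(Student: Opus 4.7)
The plan is to identify $B_{n,k}$ explicitly as $\{\sigma \in B_n : rank(\sigma) \leq k\}$ and then verify in turn the three assertions of the lemma: the $\mt{B}_n$-double-coset decomposition of $\mt{B}_{n,k}$, the rank characterization of $B_{n,k}$, and its closure under multiplication. The first part is automatic from the Renner decomposition (\ref{A:Rennerdecomposition}) of $\BM{n}$: since $\mt{B}_{n,k} = \bigcup_{e \in E_{n,k}} \overline{\mt{B}_n e \mt{B}_n}$ is stable under the two-sided $\mt{B}_n$-action, it is a disjoint union of the $\mt{B}_n$-double cosets already partitioning $\BM{n}$, and we may take
$$
B_{n,k} := \{\sigma \in B_n : \mt{B}_n \sigma \mt{B}_n \subset \mt{B}_{n,k}\}.
$$
By (\ref{A:Renner order}), membership in $B_{n,k}$ amounts to the existence of some $e \in E_{n,k}$ with $\sigma \leq e$ in BCR order.

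For the rank characterization, the forward inclusion is immediate from Remark~\ref{R:simple but useful}: $\sigma \leq e$ with $e \in E_{n,k}$ forces $rank(\sigma) \leq rank(e) = k$. For the converse, given $\sigma \in B_n$ of rank $m \leq k$, I would first prove $\sigma \leq c_\sigma$ in BCR order, where $c_\sigma \in E_{n,m}$ is the column-support idempotent of $\sigma$ (the diagonal $0/1$ matrix with a $1$ at $(j,j)$ exactly when column $j$ of $\sigma$ is nonzero). In one-line notation this is realized by processing the nonzero columns of $\sigma$ in decreasing order of column index: for each such column $j$, raise the entry $a_j$ from its current value up to $j$ through covering moves supplied by cases (2) and (3) of Lemma~\ref{L:PPRcovering0}, using case (3) to jump over any rows already occupied by smaller-indexed columns of $\sigma$. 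Next, extend $c_\sigma$ to an idempotent $f \in E_{n,k}$ by adjoining $k-m$ additional diagonal ones in unused positions; the relation $c_\sigma \leq f$ holds in the idempotent order (\ref{A:idempotentorder}) by construction, and transports to BCR order via Remark~\ref{R:Renner=idempotent order}. Composing yields $\sigma \leq f \in E_{n,k}$, so $\sigma \in B_{n,k}$.

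The subsemigroup property is then immediate from the rank characterization: for any $\sigma, \tau \in B_{n,k}$, the standard inequality $rank(\sigma \tau) \leq \min(rank(\sigma), rank(\tau)) \leq k$ yields $\sigma \tau \in B_{n,k}$; the zero rook visibly lies in $B_{n,k}$, and finiteness is inherited from $B_n$.

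The principal technical obstacle is the verification of $\sigma \leq c_\sigma$. Although geometrically transparent, one must check that the right-to-left processing order prevents any intermediate row conflict and that the case (3) rule of Lemma~\ref{L:PPRcovering0} exactly absorbs the obstruction posed by rows already occupied by smaller-indexed columns; here the upper-triangularity hypothesis $a_{j'} \leq j'$ is essential, as it guarantees that the target row $j$ is never preempted by any column $j' < j$. Once this step is in hand, everything else falls out mechanically from the identifications above.
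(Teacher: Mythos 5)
Your proof is correct and follows the same route as the paper, whose entire proof of this lemma is the single sentence that it ``follows from the definition of Bruhat-Chevalley-Renner ordering and Remark~\ref{R:simple but useful}.'' In fact you supply considerably more detail than the paper does: the paper's citation of Remark~\ref{R:simple but useful} really only accounts for the forward inclusion ($rank(\sigma)\leq k$ for every $\sigma$ appearing in the decomposition), whereas your argument that every upper triangular rook of rank $m\leq k$ satisfies $\sigma\leq c_\sigma\leq f$ for some $f\in E_{n,k}$ --- with upper-triangularity ruling out row conflicts when raising each nonzero $a_j$ to $j$ --- is precisely the nontrivial converse inclusion that the paper leaves implicit (and your raising steps could even be done in one shot via item~1 of Theorem~\ref{T:PPR}, without descending to the covering relations of Lemma~\ref{L:PPRcovering0}).
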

\begin{proof}
This follows from the definition of Bruhat-Chevalley Renner ordering
and Remark~\ref{R:simple but useful}.
\end{proof}

\begin{Lemma}\label{L:filter}
If $n$ is a nonnegative integer $n$, then there is a filtration 
\begin{align*}
\{ \mathbf{0}\} = \mt{B}_{n,0} \subset \mt{B}_{n,1} \subset \cdots \subset \mt{B}_{n,n-1} \subset 
\mt{B}_{n,n} = \BM{n}.
\end{align*}
\end{Lemma}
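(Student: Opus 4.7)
The plan is to verify the two endpoint identifications and then each intermediate inclusion $\mt{B}_{n,k-1} \subset \mt{B}_{n,k}$ separately; the argument reduces almost entirely to invocations of the idempotent-versus-BCR order comparison already recorded in Remark~\ref{R:Renner=idempotent order}.

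I would first dispose of the endpoints. The set $E_{n,0}$ contains only the zero matrix, so $\mt{B}_{n,0} = \overline{\mt{B}_n \cdot \mathbf{0} \cdot \mt{B}_n} = \{\mathbf{0}\}$ by definition. On the other extreme, $E_{n,n}$ contains only the identity $I_n$, whose $(\mt{B}_n \times \mt{B}_n)$-orbit is $\mt{B}_n$ itself, and by the definition of the Borel submonoid $\BM{n} = \overline{\mt{B}_n}$ in $\mt{Mat}_n$; hence $\mt{B}_{n,n} = \overline{\mt{B}_n I_n \mt{B}_n} = \BM{n}$.

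For each intermediate inclusion $\mt{B}_{n,k-1} \subset \mt{B}_{n,k}$ with $1 \leq k \leq n$, the strategy is to cover every idempotent $e \in E_{n,k-1}$ by some idempotent $f \in E_{n,k}$ in the BCR order. Since $k-1 < n$, the diagonal 0/1 matrix $e$ has at least one zero entry on its diagonal; flipping one such zero to a one produces an $f \in E_{n,k}$ with $ef = fe = e$, i.e.\ $e \leq f$ in the idempotent order of (\ref{A:idempotentorder}). By Remark~\ref{R:Renner=idempotent order} the relation $e \leq f$ also holds in the BCR order, so $\overline{\mt{B}_n e \mt{B}_n} \subset \overline{\mt{B}_n f \mt{B}_n} \subset \mt{B}_{n,k}$. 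Taking the union over all $e \in E_{n,k-1}$ yields $\mt{B}_{n,k-1} \subset \mt{B}_{n,k}$. Strictness is then immediate: by Remark~\ref{R:simple but useful} together with the Renner decomposition of Lemma~\ref{L:RennerStirlingdecomposition1}, every element of $\mt{B}_{n,k-1}$ has matrix rank at most $k-1$, whereas any $f \in E_{n,k}$ has matrix rank exactly $k$ and thus lies in $\mt{B}_{n,k} \setminus \mt{B}_{n,k-1}$.

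There is no real obstacle in this argument; the only point requiring care is that the set-theoretic containment of supports of diagonal 0/1 idempotents must be translated into an inclusion of Borel orbit closures. This step is precisely the content of Remark~\ref{R:Renner=idempotent order}, which pins the idempotent order to the restriction of the BCR order to $E_n$, so the argument goes through with no further input.
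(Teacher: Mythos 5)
Your proof is correct and follows essentially the same route as the paper's: both reduce the inclusions $\mt{B}_{n,k-1}\subset\mt{B}_{n,k}$ to order comparisons of diagonal idempotents and the resulting containments of Borel orbit closures, using the agreement of the idempotent order with the Bruhat--Chevalley--Renner order. In fact your write-up is more explicit than the paper's one-line proof, since you spell out the key step it leaves implicit, namely that every $e\in E_{n,k-1}$ lies below some $f\in E_{n,k}$ (flip a diagonal zero to a one).
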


\begin{proof}
This from the fact that 
$$
\overline{\mt{B}_n e \mt{B}_n } = \cup_{f\leq e} \mt{B}_n f \mt{B}_n \ 
\text{ and that } \ f \leq e \Rightarrow rank(f) \leq rank(e),
$$
where $e$ and $f$ are from $R_n$. 
\end{proof}

\begin{Corollary}
If $n$ and $k$ are two nonnegative integers such that 
$0\leq k \leq n$, then $\mt{B}_{n,k}$, hence $B_{n,k}$ (for all $k=0,\dots, n$) 
have the structure of an algebraic semigroup. 
\end{Corollary}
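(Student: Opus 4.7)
The plan is to observe that $\mt{B}_{n,k}$ is precisely the set of upper triangular $n\times n$ matrices of rank at most $k$, and then to use the fact that matrix multiplication can neither increase rank nor break upper triangularity.

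First, I would identify $\mt{B}_{n,k}$ set-theoretically. By Lemma~\ref{L:RennerStirlingdecomposition1} we have the Renner decomposition
\begin{align*}
\mt{B}_{n,k}=\bigsqcup_{\sigma\in B_{n,k}} \mt{B}_n\sigma\mt{B}_n,
\end{align*}
where $B_{n,k}$ consists of all rooks in $B_n$ of matrix rank at most $k$. Since every $\mt{B}_n\times\mt{B}_n$-orbit on $\BM{n}$ contains a unique rook (namely its normal form), and conjugating by $\mt{B}_n$ on either side preserves matrix rank, it follows that
\begin{align*}
\mt{B}_{n,k}=\{X\in\BM{n}:\operatorname{rank}(X)\leq k\}.
\end{align*}
Analogously, $B_{n,k}=\{\sigma\in B_n:\operatorname{rank}(\sigma)\leq k\}$.

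Next I would check closure under multiplication. If $X,Y\in\mt{B}_{n,k}$, then $XY$ is upper triangular (as a product of upper triangular matrices, hence in $\BM{n}$) and satisfies $\operatorname{rank}(XY)\leq\min(\operatorname{rank}(X),\operatorname{rank}(Y))\leq k$, so $XY\in\mt{B}_{n,k}$. The same argument, restricted to $0/1$-matrices with at most one nonzero entry per row and column, shows that $B_{n,k}$ is closed under multiplication (noting that the product of two elements of $R_n$ is again in $R_n$, and the product of two upper triangular rooks is upper triangular).

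Finally, for the algebraic structure, multiplication $m:\mt{Mat}_n\times\mt{Mat}_n\to\mt{Mat}_n$ is a morphism of varieties. By Lemma~\ref{L:filter}, $\mt{B}_{n,k}$ is a (Zariski) closed subvariety of $\mt{Mat}_n$, being a finite union of the closed subsets $\overline{\mt{B}_n e \mt{B}_n}$ for $e\in E_{n,k}$ together with everything below them. The restriction of $m$ to $\mt{B}_{n,k}\times\mt{B}_{n,k}$ factors through the closed subvariety $\mt{B}_{n,k}$ by the closure property just verified, and this restriction is therefore a morphism; associativity is inherited from $\mt{Mat}_n$. Hence $\mt{B}_{n,k}$ is an algebraic semigroup, and $B_{n,k}$ is a (finite) subsemigroup. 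There is essentially no obstacle here beyond correctly identifying $\mt{B}_{n,k}$ with the rank-$\leq k$ stratum of $\BM{n}$; once that identification is in hand, the multiplicativity of rank does all the work.
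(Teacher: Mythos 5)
Your proposal is correct and follows essentially the same route as the paper: the paper's own proof is the one-line observation that the rank of a product is bounded by the ranks of the multiplicands, combined with the filtration of Lemma~\ref{L:filter}. You simply make explicit the intermediate identification $\mt{B}_{n,k}=\{X\in\BM{n}:\operatorname{rank}(X)\leq k\}$ and the fact that multiplication restricts to a morphism on the closed subvariety, both of which the paper leaves implicit.
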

\begin{proof}
Since the rank of the product of two matrices is bounded by the 
ranks of the multiplicands, our claim follows from Lemma~\ref{L:filter}.
\end{proof}

\begin{Lemma}
The subsemigroups $B_{n,k}\subset B_n$ form a 
flag $\{\mathbf{0}\}\subset B_{n,1}\subset \cdots \subset B_{n,n}=B_n$
and moreover the number of elements of the difference $B_{n,k}-B_{n,k-1}$ is 
given by the Stirling number $S(n+1,n+1-k)$. 
\end{Lemma}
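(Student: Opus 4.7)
The plan is to handle the two assertions separately, using the bijective tools already established in the paper. The flag structure is essentially immediate from Lemma~\ref{L:filter} and Lemma~\ref{L:RennerStirlingdecomposition1}: since $B_{n,k}$ consists precisely of the rooks in $B_n$ whose matrix rank is at most $k$, and since the product of two matrices has rank at most the minimum of their ranks, the sets $B_{n,k}$ are nested, $B_{n,0}=\{\mathbf{0}\}$, and $B_{n,n}=B_n$. This gives the filtration without difficulty.

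The content is in the enumeration of $B_{n,k}\setminus B_{n,k-1}$, which is the set of upper triangular rooks in $B_n$ of matrix rank exactly $k$. The key observation is that $B_n$ is in bijection with arc-diagrams on $n+1$ vertices (equivalently, set partitions of $\{1,\dots,n+1\}$), in a way that sends rank to number of arcs. To see this, I would invoke the monoid isomorphism $B_n\cong B_{n+1}^{nil}$ recalled just before equation~\eqref{A:Rennerdecomposition} (applied with $n$ replaced by $n+1$): deleting the first column and last row of a strictly upper triangular $(n+1)\times(n+1)$ rook produces an upper triangular $n\times n$ rook, and this transformation clearly preserves the number of nonzero entries, i.e., the matrix rank. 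Composing with the order isomorphism $\varphi^{-1}:B_{n+1}^{nil}\to \mc{A}_{n+1}$ of Theorem~\ref{T:Borelmonoid} then identifies $B_n$ with $\mc{A}_{n+1}$, and under $\varphi^{-1}$ a rook of rank $k$ corresponds by construction to an arc-diagram on $n+1$ vertices with exactly $k$ arcs.

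Now an arc-diagram on $n+1$ vertices with $k$ arcs has exactly $(n+1)-k$ chains, which is the number of blocks of the associated set partition of $\{1,\dots,n+1\}$. The number of set partitions of an $(n+1)$-element set with $n+1-k$ blocks is, by definition, the Stirling number $S(n+1,n+1-k)$. Combining these bijections gives
\[
|B_{n,k}\setminus B_{n,k-1}| \;=\; |\{\sigma\in B_n : \operatorname{rank}(\sigma)=k\}| \;=\; S(n+1,n+1-k),
\]
as desired.

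The main (only) delicate point is tracking the indices correctly when passing between $B_n$, its nilpotent counterpart $B_{n+1}^{nil}$, and the arc-diagram model $\mc{A}_{n+1}$; in particular one must be careful that the rank of a rook equals the number of its nonzero entries and that this equals the number of arcs (not the number of chains) in the associated arc-diagram, which is why the Stirling number appears in the form $S(n+1,n+1-k)$ rather than $S(n+1,k)$. Once the indexing is aligned, no further computation is needed.
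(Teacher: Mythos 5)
Your proof is correct and takes essentially the same route as the paper: the flag structure is deduced from Lemmas~\ref{L:RennerStirlingdecomposition1} and~\ref{L:filter}, and the enumeration comes from the bijection between upper triangular $n\times n$ rooks of rank $k$ and set partitions of $\{1,\dots,n+1\}$ (via arc-diagrams on $n+1$ vertices with $k$ arcs, hence $n+1-k$ blocks). If anything, your index bookkeeping is slightly more careful than the paper's own one-line argument, which refers to partitions ``with $k$ blocks'' where $n+1-k$ blocks is what is actually needed to produce $S(n+1,n+1-k)$.
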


\begin{proof}
The first claim follows from Lemmas~\ref{L:RennerStirlingdecomposition1}
and~\ref{L:filter}. The second claim follows from the fact that 
$B_{n,k}-B_{n,k-1}$ consists of upper triangular rooks whose
matrix rank is $k$ and that the set partitions of $\{1,\dots, n+1\}$
with $k$ blocks is in bijection with upper triangular $n\times n$
rook matrices of rank $k$. 
\end{proof}

\begin{Proposition}\label{P:Pnk}
The Bruhat-Chevalley-Renner ordering
restricted to the subsets of the form $B_{n,k}-B_{n,k-1}$
(for $k=1,\dots, n$) is a graded poset 
with a minimum element. It has ${n \choose k}$ maximal elements. 
Each maximal interval in this poset is an interval in $B_n$,
therefore, it is an EL-shellable poset. 
\end{Proposition}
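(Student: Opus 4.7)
The plan is to exploit the rank monotonicity of BCR (Remark~\ref{R:simple but useful}) together with the explicit covering descriptions of Theorem~\ref{T:PPR} and Lemmas~\ref{L:PPRcovering0}, \ref{L:PPRcovering1}. A single BCR cover in $B_n$ either preserves the matrix rank (items $2$--$3$ of Lemma~\ref{L:PPRcovering0} and the swap of Lemma~\ref{L:PPRcovering1}) or raises it by exactly one (item $1$ of Lemma~\ref{L:PPRcovering0}). Combined with the monotonicity of rank under BCR, any saturated chain in $B_n$ between two rank-$k$ rooks stays inside $B_{n,k}-B_{n,k-1}$, so the restricted poset is graded with covers exactly the rank-preserving covers of $B_n$.

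Next, I would identify the ${n \choose k}$ maximal elements with $E_{n,k}$. Given a rank-$k$ rook $\sigma$, the irreducible closure $\overline{\mt{B}_n\sigma\mt{B}_n}$ is contained in $\mt{B}_{n,k}=\bigcup_{e\in E_{n,k}}\overline{\mt{B}_n e\mt{B}_n}$ (Lemma~\ref{L:RennerStirlingdecomposition1}), hence in some $\overline{\mt{B}_n e\mt{B}_n}$, giving $\sigma\leq e$ for some $e\in E_{n,k}$. Since the idempotents in $E_{n,k}$ are pairwise BCR-incomparable (as shown in the first lemma of this section), each $e\in E_{n,k}$ is a maximal element of the stratum, and these account for all the maxima.

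For the unique minimum I would take $\sigma^{\min}=(0,\dots,0,1,2,\dots,k)\in B_n$ with zeros in the first $n-k$ positions; by Proposition~\ref{P: sum inv} this is the unique rank-$k$ rook in $B_n$ of minimal length ${k+1 \choose 2}$. To show $\sigma^{\min}\leq\sigma$ for every rank-$k$ rook $\sigma$, I would induct on $\ell(\sigma)-{k+1 \choose 2}$ by producing a rank-preserving down-cover $\tau\prec\sigma$. If $\sigma$ has any inversion, an extremality argument shows that a minimal-gap inversion must be adjacent, because any intermediate entry $\sigma_s$ would satisfy $\sigma_i\leq\sigma_s\leq\sigma_j$ and contradict the original inversion $\sigma_i>\sigma_j$; the swap at such an adjacent inversion is automatically a cover by Lemma~\ref{L:PPRcovering1} (no intermediate-entry condition to verify). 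If instead $\sigma$ has no inversion at all, then $\sigma=(0,\dots,0,v_1,\dots,v_k)$ with $v_1<\dots<v_k$, and $\sigma\neq\sigma^{\min}$ forces some $v_j>j$; decrementing the smallest such $v_j$ produces a valid rank-$k$ rook $\tau$ and a cover by Lemma~\ref{L:PPRcovering0}(2). In either case the induction hypothesis gives $\sigma^{\min}\leq\tau<\sigma$.

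Given the minimum and maxima, the remaining assertions follow quickly. For each $e\in E_{n,k}$ the BCR-interval $[\sigma^{\min},e]$ in $B_n$ lies entirely in $B_{n,k}-B_{n,k-1}$, since any $\tau$ between two rank-$k$ elements must itself have rank $k$; conversely every rank-$k$ rook below $e$ lies above $\sigma^{\min}$, so the maximal sub-poset through $e$ coincides with $[\sigma^{\min},e]$ taken in $B_n$. EL-shellability then follows at once because $B_n$ is a lower interval in the rook monoid $R_n$ (namely $[\mathbf{0},(1,2,\dots,n)]$), $R_n$ is EL-shellable by~\cite{Can08}, and every interval in an EL-shellable poset inherits the EL-shelling. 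The main technical hurdle I expect is the case analysis in the induction step establishing $\sigma^{\min}\leq\sigma$; the interplay between inversions involving zero entries and the intermediate-entry condition of Lemma~\ref{L:PPRcovering1} is what forces the minimal-gap extremality argument sketched above.
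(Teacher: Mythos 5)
Your proposal is correct and follows essentially the same route as the paper: identify the maxima with the pairwise-incomparable idempotents of $E_{n,k}$, identify the minimum with $(0,\dots,0,1,2,\dots,k)$, and conclude that the stratum is a union of ${n \choose k}$ equal-length intervals of $B_n$ (hence of $R_n$), from which gradedness and EL-shellability follow. The only difference is that you supply in full the inductive cover-by-cover argument for the minimality of $(0,\dots,0,1,2,\dots,k)$, which the paper dismisses as ``easy to check.''
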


\begin{proof}
Let $P_{n,k}$ denote $(B_{n,k}-B_{n,k-1},\leq)$, where $k\in \{1,\dots, n\}$.
The poset $(P_{n,k},\leq)$, where $\leq$ is the Bruhat-Chevalley-Renner
ordering has ${n \choose k}$ maximal elements which are given by
the incomparable diagonal $n\times n$ rook matrices of rank $k$. 
It is easy to check that the rook matrix (given in one-line notation) 
$$
e_0:= (0,\dots, 0,1,2,\dots, k) \in B_n
$$
is the smallest element of $(P_{n,k},\leq)$.
Therefore, $P_{n,k}$ is a union of ${n\choose k}$ maximal intervals 
all of which has the same poset rank. It is clear that these maximal
intervals are intervals in $B_n$, hence in $R_n$ as well. In particular
we see that $P_{n,k}$ is an EL-shellable poset. 
\end{proof}

Now we are ready to prove Theorem~\ref{T:Borelmonoid2}
which states that for every pair of nonnegative 
integers $(n,k)$ such that $0\leq k \leq n$ 
the Stirling poset $\mc{A}_{n,k}$ 
is a graded and EL-shellable poset. Furthermore,
the cardinality of $\mc{A}_{n,k}$ is given by $S(n, n-k)$.

\begin{proof}[Proof of Theorem~\ref{T:Borelmonoid2}]
The second claim of the theorem is straightforward
to prove. To prove the first claim, 
following the notation in the proof of Proposition~\ref{P:Pnk}, 
we denote the poset $(B_{n,k}-B_{n,k-1},\leq)$ by $P_{n,k}$. 
Recall from Section~\ref{S:proofof2} that there is 
a poset isomorphism $\varphi : (\mc{A}_n,\prec) \rightarrow (B_{n-1},\leq )$
that is defined by $\varphi(A)=\sigma$ whenever $A$ and $\sigma$ are 
related as follows: $\{i,j\}$ is an arc in $A$ if and only if $\sigma_j=i$. 
In particular, if $A$ is an arc-diagram 
with $k$ chains, then $\sigma$ is a rook matrix of rank $n-k$.
Therefore, we see that $\mc{A}_{n,k}$ is isomorphic 
to $P_{n-1, n-k}$, hence $\mc{A}_{n,k}$ is graded and EL-shellable.
\end{proof}

\begin{Convention}\label{C:AP}
In the light of Theorem~\ref{T:Borelmonoid2}, whenever it is more convenient, 
we will use the poset $(B_{n-1,n-k}-B_{n-1,n-k+1},\leq)$, which is abbreviated 
to $P_{n-1,n-k}$, in place of $(\mc{A}_{n,k},\prec)$. 
\end{Convention}
\vspace{.5cm}

We proceed to prove Theorem~\ref{T:(n,2) is boolean} which states that 
$\mc{A}_{n,2}$ is the boolean lattice $B(n-1)-\{\{1,\dots, n-1\}\}$. 

\begin{proof}[Proof of Theorem~\ref{T:(n,2) is boolean}]
Following Convention~\ref{C:AP}, we will identify $P_{n-1,n-2}$ with $(\mc{A}_{n,2},\prec)$. 
A rook matrix $x\in B_n$ is an element of $P_{n-1,n-2}$ if its one-line notation 
$x= (a_1,\dots, a_{n-1})$ satisfies the following properties:
\begin{itemize}
\item the cardinality of $\{a_1,\dots, a_{n-1}\}$ is $n-1$. This means that the entries of $x$ are mutual different.
\item For $i=1,\dots, n-1$, $0\leq a_i \leq n-1$.
\end{itemize}
Clearly, the smallest element of $P_{n-1,n-2}$ is 
$x_0=(0,1,2,\dots, n-2)$. Indeed, $x_0$ has no inversions, and the sum of entries
of $x_0$ is the unique minimum of the function 
$$
x=(a_1,\dots, a_{n-1}) \longmapsto \sum_{i=1}^{n-1} a_i
$$ 
on $P_{n-1,n-2}$.

Next, we define the map $\psi: P_{n-1,n-2}\longrightarrow B(n-1)-\{\{1,\dots, n-1\}\}$ by 
$$
\psi(x)  = \{ a_i :\ a_i =i, \text{ where }  i\in \{1,\dots, n-1\} \}.
$$
Our goal is to prove that $\psi$ is a poset isomorphism by showing that for every  
$x=(a_1,\dots, a_{n-1})$ from $P_{n-1,n-2}$, the interval $[x_0,x]$ is isomorphic to $B(r)$, 
where $r$ is the number of indices $i=1,\dots, n-1$ such that $a_i =i$. 
To this end, we proceed by induction on $n$. The base case is when $n=2$ and 
in this case $P_{2,1}$ is isomorphic to a fish net poset with 3 elements, so our claim holds true. 
(In a similar manner, $P_{3,2}$ case be checked by hand.)

Now, let $x=(a_1,\dots, a_{n-1})$ be an element with $r$ fixed points, that is to say 
the cardinality of $\{a_i : \ a_i = i \text{ where } i\in \{ 0,1,\dots, n-1\} \}$ is $r$. 
We notice that the non-fixed entries of $x$ appear in an increasing order. 
In other words, if $a_{i_1},\dots, a_{i_{n-1-r}}$ are the entries in $x$ such that 
$0\leq a_{i_j} < i_j$ ($j=1,\dots, n-1-r$), then $a_{i_1} < \cdots < a_{i_{n-1-r}}$. 
Next, we observe that if $x$ covers $y$ in $P_{n-1,n-2}$, then 
$y$ is obtained from $x$ by interchanging exactly one of the fixed entries $a_i$
($=i$) with a non-fixed entry $a_j$ ($\neq j$). In this case, by our induction 
hypothesis $[x_0,y]$ is isomorphic to the Boolean lattice $B(r-1)$. 
Since there are exactly $r$ such subintervals in $[x_0,x]$, we see that $[x_0,x]$
is isomorphic to $B(r)$, hence the proof is finished.

\end{proof}

\section{Intervals in $\mc{A}_n$}\label{S:Intervals}

From an algebraic point of view, the Borel submonoid $\BM{n}$ 
may look much simpler compared to its ambient monoid $\mt{Mat}_n$.
Our goal in this section is to show that, once its size is doubled, 
the Borel monoid $\BM{2n}$ packs at least the same amount of combinatorial information 
as $\mt{Mat}_n$ does. 

We start with proving Theorem~\ref{T:intervals}. Its first three items states the
following: 
\begin{enumerate}
\item The interval $([Y(n),X(n)],\prec)$ in $\mc{A}_{2n}$ is isomorphic to $(S_n,\leq)$. 
\item The interval $([Z(n),Y(n)],\prec)$ in $\mc{A}_{2n}$ and is isomorphic to $(B_n,\leq)$.
\item The interval $([Z(n),X(n)],\prec)$ in $\mc{A}_{2n}$ is isomorphic to $(R_n,\leq)$. 
\end{enumerate}
where $X(n),Y(n),Z(n)$ and $W(n)$ are as in Figure~\ref{F:allofthem}.
\begin{proof}[Proof of Theorem~\ref{T:intervals}]
For the proofs of these statements, once again, we will use the 
poset isomorphism (\ref{A:varphi}) between $\mc{A}_{2n}$ and $B_{2n-1}$. 
In particular, the one-line notation for the rook matrices 
$\varphi(W(n)),\varphi(Z(n)),\varphi(X(n))$, and $\varphi(Y(n))$ are given by 
\begin{align*}
\varphi ( W(n)) &= (0,1,2,\dots, n,0,\dots, 0), \\ 
\varphi ( Z(n)) &= (0,\dots, 0), \\ 
\varphi ( Y(n)) &= (0,\dots,0,1,2,\dots, n), \\ 
\varphi ( X(n)) &= (0,\dots,0,n,n-1,\dots, 1).
\end{align*}
It is easy to see, by using Theorem~\ref{T:PPR}, that
if two rooks $x= (a_1,\dots, a_{2n})$ and $y=(b_1,\dots,b_{2n})$
have their first $n$ entries the same, that is $a_i=b_i$ for $i=1,\dots, n$,
then $x\leq y$ if and only if $(a_{n+1},\dots, a_{2n})\leq (b_{n+1},\dots,b_{2n})$.
The proofs of our claims 1.,2., and 3. follow easily from this simple observation.

To explain and prove the last item, 
we briefly review ``Ding's Schubert varieties.''
Let $\mt{Mat}_{n,m}$ denote the set of all $n\times m$ matrices of rank $n$,
hence we implicitly assume that $m\geq n$.
Let $\lambda = (\lambda_1,\dots, \lambda_{n})$ be a partition with $\lambda_i \geq \lambda_{i+1}$
for $1\leq i \leq n-1$ and $\lambda_1 = m$. 
For us, a Ferrers board $F_\lambda$ is a top-right justified subarray in an $n\times m$ 
matrix such that the length of the $i$-th row is $\lambda_i$. For example, 
if $\lambda = (6,3,1)$ (hence $n=3,m=6$), then the corresponding Ferrers board is of the form 
\begin{align*}
\begin{bmatrix}
* & * & * & * & * & * \\
& &  &  * & * & * \\
& &  &  &   & * 
\end{bmatrix},
\end{align*}
We denote by $M_\lambda$ the set 
$M_\lambda = \{ (a_{i,j}) \in \mt{Mat}_{n,m}:\ a_{i,j}=0\ \text{ if } (i,j) \notin F_\lambda \}$.
As it is shown in~\cite{Ding}, the quotient space $\mt{B}_n \backslash M_\lambda$, 
which is denoted by $X_\lambda$ 
has the structure of a smooth projective variety and it is noticed by 
Develin, Martin, and Reiner that 
$X_\lambda$ is actually isomorphic to a 
Schubert variety $X_w$ in $\mt{GL}_m/\mt{P}_\lambda$,
where $\mt{P}_\lambda$ is the parabolic subgroup of matrices 
of the form 
\begin{equation*}
\begin{pmatrix}
A_1 & *  \\
0 &  A_2  
\end{pmatrix},
\end{equation*}
where $A_1$ is an upper triangular invertible $n\times n$ matrix, 
and $A_2$ is an $m-n\times m-n$ invertible matrix. 
(See Section 2 of~\cite{DMR}. Note that in the cited reference 
the authors use flags rather than matrices to describe the partial flag varieties
and their Schubert varieties.)

Now, we specialize to our situation by choosing 
$\lambda =( 2n-1, 2n-2,\dots, n-1)$.
Thus, we have $m=2n-1$. However, we will view $F_\lambda$ as a 
top-right justified subarray of a $2n\times 2n$ matrix, hence 
$M_\lambda$ is contained in $\mt{Mat}_{2n}$ as an affine subvariety. 
It is not difficult to check that $M_\lambda$ is closed under 
the two sided action of $\mt{B}_{2n}\times \mt{B}_{2n}$. 
Consider the following subgroup of $\mt{B}_{2n}$:
$$
H:=\left\{ \begin{bmatrix} A & \mbf{0} \\ \mbf{0} & id_n \end{bmatrix}:\ A\in \mt{B}_n \right\}.
$$
Clearly, $H$ is isomorphic to $\mt{B}_n$ and the left multiplication action
of $\mt{B}_{2n}$ on $M_\lambda$ is equivalent to the left multiplication action of $\mt{B}_n$.
Therefore, the isomorphism $\mt{B}_n \backslash M_\lambda \rightarrow X_\lambda$ 
is $\mt{B}_{2n}$-equivariant, where the $\mt{B}_{2n}$ action on $M_\lambda$ is on the right
and $\mt{B}_{2n}$ action on $X_\lambda$ is on the left. 

Now we need a general fact about the topology of Schubert varieties. 
Let $G$ be a reductive algebraic group, $P\subset G$ be a parabolic subgroup
containing a Borel subgroup $B$. 
It is a well known fact that for every Schubert variety $X$ in $G/P$,
the orbits of $B$ in $X$ are affine spaces
and furthermore these affine spaces give a cell decomposition. 
Therefore, in our case, the cells of $X_\lambda$ are given by the $\mt{B}_{2n}$ orbits
in $X_\lambda$. Said differently, each $\mt{B}_n\times \mt{B}_{2n}$ orbit in $M_\lambda$
corresponds to an affine cell in $X_\lambda$. Since these orbits
are the same as the orbits of $\mt{B}_{2n}\times \mt{B}_{2n}$, we see that 
the partial order $\leq$ restricted to the rook matrices in
$M_\lambda$ describes the inclusion poset on the cells of the Schubert variety $X_\lambda$. 
It is clear that $\varphi ( W(n)) = (0,1,2,\dots, n,0,\dots, 0)$ corresponds to the maximal 
dimensional cell and $\varphi ( Y(n)) = (0,\dots,0,1,2,\dots, n)$ corresponds to the smallest
dimensional cell, hence our proof is finished. 

\end{proof}

\section{A recurrence}\label{S:Recurrence}

We already pointed out in the introductory section that 
the generating function ${n \bb k}$ of $\mc{A}_{n,k}$ 
is a $q$-analog of the Stirling numbers of second kind. 
Another closely related $q$-analog, which is introduced by 
Garsia~\cite{Garsia} and studied in~\cite{Milne,GarsiaRemmel}
is as follows. 
For $k=0,\dots, n$, $S_{n,k}(q)$ is defined as the polynomial
that solves the recurrences
\begin{align}\label{A:GR}
S_{n+1,k} (q) = q^{k-1} S_{n,k-1}(q)  + [k]_q S_{n,k}(q) 
\end{align}
with initial conditions $S_{0,0}=1$ and conventions 
$S_{n,k}=0$ whenever $0>k$ or $k>n$.
It is shown by Garsia and Remmel in~\cite{GarsiaRemmel} that 
\begin{align}\label{A:GR S=R}
S_{n,k}(q)=R_{n-k}(\delta_n,q),
\end{align} 
where $R_{k}(\delta_n,q)$ (for $k=0,\dots, n$) is the combinatorially defined function 
\begin{align}\label{A:GR rook poly}
R_{k}(\delta_n,q) = \sum_{\textbf{r} \in C_k(\delta_{n})} q^{\text{stat}(\textbf{r})}. 
\end{align}
Here $\delta_n$ is the 
staircase board, namely the bottom-right justified 
arrangement of boxes with $i-1$ boxes in the $i$-th row, 
$C_k(\delta_{n})$ is the set of all placements of $k$ non-attacking rooks in $\delta_n$.
Clearly, $k$-rook placement can be thought of as an element of $P_{n,k}$ 
by turning the staircase board up-side-down and then completing it to a square 
$n\times n$ matrix with 0's and 1's where $1$'s represent the placements of non-attacking rooks.  
Finally, the statistics in (\ref{A:GR rook poly}) is the inversion statistics of the rook placements. 
Rather than defining this combinatorial statistic on rook placements, 
which is somewhat lengthy, we will 
mention a useful result that gives us an equivalent form.
The following observation is recorded in~\cite[Lemma 5.3]{CanRenner08} 
in a slightly different terminology.
\begin{Lemma}\label{L:CanRenner08}
Let $\sigma$ be a rook matrix from $P_{n-1,k}$ and 
let $\textbf{r}=\textbf{r}(\sigma)$ denote the corresponding 
non-attacking $k$-rook placement in $\delta_n$. In this case, 
the following equality holds true
$$
\dim ( \mt{B}_n \sigma \mt{B}_n) = {n \choose 2} - \text{stat}(\textbf{r}).
$$
\end{Lemma}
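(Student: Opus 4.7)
The plan is to derive the identity by combining the explicit length formula from Proposition~\ref{P: sum inv} with a direct bijective translation to rook placements on the staircase board $\delta_n$. Since $\dim(\mt{B}_n \sigma \mt{B}_n) = \ell(\sigma)$, Proposition~\ref{P: sum inv} already gives
\[
\dim(\mt{B}_n \sigma \mt{B}_n) = \sum_{j=1}^{n-1} a_j + \mt{inv}(\sigma),
\]
where $\sigma = (a_1,\dots,a_{n-1}) \in P_{n-1,k} \subset B_{n-1}$. So the task reduces to verifying the purely combinatorial identity $\sum a_j + \mt{inv}(\sigma) + \mt{stat}(\mbf{r}(\sigma)) = \binom{n}{2}$.

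First I would make the bijection $\sigma \longleftrightarrow \mbf{r}(\sigma)$ explicit. An upper triangular rook matrix $\sigma \in B_{n-1}$ has its possible positions of $1$'s arranged in a triangle of size $1+2+\cdots+(n-1) = \binom{n}{2}$, which matches $|\delta_n|$. After flipping rows so that the triangle is bottom-right justified, a nonzero entry $a_j = i$ is sent to a rook in the cell of $\delta_n$ lying in row $j$ at the column corresponding to $i$. Counting cells in $\delta_n$ will give exactly the number $\binom{n}{2}$ we want on the right-hand side.

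Next I would invoke the Garsia--Remmel cancellation rule defining $\mt{stat}(\mbf{r})$: place the rooks in some canonical order and, for each rook at position $(i,j)$, cancel every cell of $\delta_n$ lying weakly below it in its column, together with the cell containing the rook itself. Then $\mt{stat}(\mbf{r})$ equals the number of cells of $\delta_n$ that end up uncanceled. The heart of the proof is then a careful cell-by-cell bookkeeping: one shows that for each rook at $(i,j)$, the number of canceled cells it contributes (discounting double-counting by already-placed rooks) equals the corresponding contribution $a_j + (\text{number of inversions involving column } j)$ to the length formula. Summing over all rooks yields $\binom{n}{2} - \mt{stat}(\mbf{r}) = \sum a_j + \mt{inv}(\sigma) = \ell(\sigma)$.

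The main obstacle will be aligning the two different combinatorial frameworks, namely the inversion convention on one-line notation used for $\ell(\sigma)$ versus the cancellation convention used for $\mt{stat}(\mbf{r})$ in the Garsia--Remmel setup. In particular, verifying that the rook-placement-ordering used to define cancellations and the ordering implicit in counting inversions of $\sigma$ match after flipping the triangular board is where a small but delicate argument is required. Once this compatibility is in place, the identity drops out by induction on the number of rooks, or equivalently by peeling off the largest rook and comparing the two sides.
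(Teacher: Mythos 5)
First, a point of comparison: the paper does not actually prove this lemma --- it is imported wholesale from \cite[Lemma 5.3]{CanRenner08} (the paper even declines to define $\text{stat}$, calling the definition ``somewhat lengthy''). So you are supplying an argument where the authors supply a citation. Your opening reduction is sound: since $\dim(\mt{B}_n\sigma\mt{B}_n)=\ell(\sigma)$, Proposition~\ref{P: sum inv} converts the lemma into the combinatorial identity $\sum_j a_j+\mt{inv}(\sigma)+\text{stat}(\textbf{r}(\sigma))={n \choose 2}$, and your count $1+2+\cdots+(n-1)={n\choose 2}=|\delta_n|$ correctly matches the boards.

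The genuine gap is in the statistic itself. The cancellation rule you state --- each rook cancels only the cells weakly below it in its own column --- is not the Garsia--Remmel statistic, and with that rule the identity is false. Test $n=3$, $k=1$: the three elements of $P_{2,1}$ are $(1,0),(0,1),(0,2)$ with lengths $2,1,2$, so the lemma forces the multiset of stat-values over the three one-rook placements on $\delta_3$ to be $\{1,2,1\}$, consistent with $R_1(\delta_3,q)=S_{3,2}(q)=2q+q^2$ from the recurrence (\ref{A:GR}). Your column-only rule gives $\{1,2,2\}$: the rook in the one-cell column and the rook at the bottom of the two-cell column each cancel a single cell, leaving two uncancelled cells in both cases. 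More structurally, under any columnwise rule the cancelled sets of distinct rooks are disjoint and each depends only on that rook's own position, so $\text{stat}$ would be ${n\choose 2}$ minus an additive function of the individual rook positions; but $\ell(\sigma)=\sum a_j+\mt{inv}(\sigma)$ is not additive in the rook positions because of the inversion term (compare $\ell(1,0,0)=3$, $\ell(0,0,2)=2$, yet $\ell(1,0,2)=4\neq 5$). In the actual convention of \cite{GarsiaRemmel} a rook also cancels cells of the board lying in its own row, and it is exactly the overlap between the row-cancellations and column-cancellations of different rooks that manufactures $\mt{inv}(\sigma)$. Relatedly, the ``cell-by-cell bookkeeping'' is asserted rather than performed: attributing ``the inversions involving column $j$'' to the single rook in column $j$ is not well defined (an inversion is a pair of columns), and since cancellation sets overlap the per-rook contributions do not simply add. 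To close the argument you need (i) the correct definition of $\text{stat}$ and (ii) an actual verification --- for instance the induction on $k$ you mention at the end, but with the increments of $\ell(\sigma)$ and of $\text{stat}(\textbf{r})$ under removal of a rook both computed and shown to agree.
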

As a consequence of Lemma~\ref{L:CanRenner08} and definitions, 
\begin{align}\label{A:ours vs GR}
{n+1 \bb k}= q^{ {n+1\choose 2} } R_k \left(\delta_{n+1},\frac{1}{q}\right).
\end{align}

Now we are ready to finish our paper by proving 
Theorem~\ref{T:gen func}.

\begin{proof}[Proof of Theorem~\ref{T:gen func}]

By (\ref{A:GR S=R}) and the recurrence relation in (\ref{A:GR}), we have 
\begin{align}
q^{ {n+1\choose 2} } R_k \left(\delta_{n+1},\frac{1}{q}\right) 
&= q^{ {n+1\choose 2} }  S_{n+1,n+1-k} \left(\frac{1}{q}\right) \notag \\
&= q^{ {n+1\choose 2} }  q^{-(n-k)} S_{n,n-k}\left(\frac{1}{q}\right) + q^{ {n+1\choose 2} } [n+1-k]_{\frac{1}{q}} S_{n,n+1-k}\left(\frac{1}{q}\right) \notag \\
&= q^{ {n\choose 2} +k} R_k \left(\delta_{n},\frac{1}{q}\right) + q^{ {n+1\choose 2} -(n-k)} [n+1-k]_q 
R_{k-1} \left(\delta_{n},\frac{1}{q}\right). \label{A:son}
\end{align}
It follows from (\ref{A:ours vs GR}) and (\ref{A:son}) that 
\begin{align}\label{A:first part}
{n+1 \bb k} = q^{k} {n \bb k} + [n+1-k]_q q^{k} {n\bb k-1}.
\end{align}
This finishes the proof of our theorem.

\end{proof}


\begin{thebibliography}{9}


\bibitem{Bjorner80}
Bj{\"o}rner, A.
{\em Shellable and Cohen-Macaulay partially ordered sets.} 
Trans. Amer. Math. Soc. 260 (1980), no. 1, 159--183. 

\bibitem{BjornerWachs}
Bj{\"o}rner, A. and Wachs M.
{\em Bruhat order of Coxeter groups and shellability.} 
Adv. in Math., 43(1):87--100, 1982.

\bibitem{Can08}
Can, M.B.
{\em Rook Monoid is Lexicographically Shellable.}
\url{https://arxiv.org/abs/1001.5104}

\bibitem{CanRenner}
Can, M.B., Renner, L.E. 
{\em Bruhat-Chevalley order on the rook monoid.} 
Turkish J. Math. 36 (2012), no. 4, 499--519. 

\bibitem{CanRenner08}
Can, M.B., Renner, L.E. 
{\em $H$-polynomials and rook polynomials.} 
Internat. J. Algebra Comput. 18 (2008), no. 5, 935--949.


\bibitem{DMR}
Develin, M., Martin, J.L., Reiner, V.
{\em Classification of Ding's Schubert varieties: finer rook equivalence.}
Canad. J. Math. 59 (2007), no. 1, 36--62. 


\bibitem{Ding}
Ding, K.
{\em Rook placements and cellular decomposition of partition varieties.}
Discrete Math. 170 (1997), no. 1-3, 107--151. 


\bibitem{Garsia}
Garsia, A.M.
{\em On the "maj'' and "inv'' $q$-analogues of Eulerian polynomials.} 
Linear and Multilinear Algebra 8 (1979/80), no. 1, 21--34. 

\bibitem{GarsiaRemmel}
Garsia, A.M., Remmel, J.B.
{\em $Q$-counting rook configurations and a formula of Frobenius. }
J. Combin. Theory Ser. A 41 (1986), no. 2, 246--275. 


\bibitem{Milne}
Milne, S.C. 
{\em Restricted growth functions, rank row matchings of partition lattices, and $q$-Stirling numbers.} 
Adv. in Math. 43 (1982), no. 2, 173--196.

\bibitem{PPR97}
Pennell, E.A., Putcha, M.S., Renner, L.E.
{\em Analogue of the Bruhat-Chevalley order for reductive monoids.}
J. Algebra 196 (1997), no. 2, 339--368.


\bibitem{PudlakTuma}
Pudlak, P., Tuma, J.
{\em Every finite lattice can be embedded in a finite partition lattice. }
Algebra Universalis 10 (1980), no. 1, 74--95. 


\bibitem{Putcha06}
Putcha, M.S. 
{\em Parabolic monoids. I. Structure.} 
Internat. J. Algebra Comput. 16 (2006), no. 6, 1109--1129.


\bibitem{Renner86}
Renner, L.E.
{\em Analogue of the Bruhat decomposition for algebraic monoids.}
J. Algebra 101 (1986), no. 2, 303--338. 

\bibitem{RennerBook}
Renner, L.E.
{\em Linear algebraic monoids.} 
Encyclopaedia of Mathematical Sciences, 134. Invariant Theory and Algebraic Transformation Groups, 
V. Springer-Verlag, Berlin, 2005.






\bibitem{Wachs}
Wachs, M.
{\em A basis for the homology of the $d$-divisible partition lattice.}
Adv. Math. 117 (1996), no. 2, 294--318. 

\bibitem{WachsExpository}
Wachs, M.
{\em Poset topology: tools and applications.} 
In Geometric combinatorics, volume 13 of IAS/Park City Math. Ser., pages 497--615. Amer. Math. Soc., Providence, RI, 2007.

\bibitem{WachsWhite} 
Wachs, M., White, D. {\em $p,q$-Stirling numbers and set partition statistics.} 
J. Combin. Theory Ser. A 21 56 (1991) 27--46.

\end{thebibliography}
\end{document}